\numberwithin{equation}{section}
\newtheorem{theorem}{Theorem}[section]
\newtheorem{proposition}{Proposition}[section]
\newtheorem{lemma}[theorem]{Lemma}
\newtheorem{corollary}[theorem]{Corollary}
\newtheorem{definition}[theorem]{Definition}
\def\C{{\mathbb C}}
\def\Z{{\mathbb Z}}
\def\N{{\mathbb N}}
\def\R{{\mathbb R}}
\def\B{{\mathcal B}}
\def\P{{\bf P}}
\def\E{{\bf E}}
\def\L{{\bf L}}
\def\Q{{\bf Q}}
\def\I{{\bf I}}
\def\I{{\bf I}}
\def\<{\left<}
\def\>{\right>}
\def\dist{{\rm dist}}
\def\essup{{\rm outsup}}
\title[$L^p$ theory for outer measures]{
$L^p$ theory for outer measures and two themes
of Lennart Carleson united
}
\dedicatory{Dedicated to Lennart Carleson}
\author{Yen Do}
\address{Yen Do, Department of Mathematics,
Yale University, New Haven, CT 06511, USA}
\email{yen.do@yale.edu}
\author{Christoph Thiele}
\address{Christoph Thiele, Mathematisches Institut,
Universit\"at Bonn, Endenicher Alle 60, D-53115 Bonn, and 
Department of Mathematics,
UCLA, Los Angeles, CA 90095, USA}
\email{thiele@math.uni-bonn.de}
\subjclass[2000]{42B20}
\thanks{Y.D. partially supported by NSF grant DMS 1201456.}
\thanks{C.Th. partially supported by NSF grant DMS 1001535.}
\date{\today}
\begin{document}

\begin{abstract}
We develop a theory of $L^p$ spaces based on outer measures
generated through coverings by distinguished sets. The 
theory includes as special case the classical $L^p$ theory on 
Euclidean spaces as well as some previously considered generalizations. 
The theory is a framework to describe aspects of singular integral theory 
such as Carleson embedding theorems, paraproduct estimates and $T(1)$ theorems.
It is particularly useful for generalizations of singular integral theory in 
time-frequency analysis, the latter originating in Carleson's investigation 
of convergence of Fourier series.  We formulate and prove a generalized Carleson 
embedding theorem and give a relatively short reduction of the most basic $L^p$
estimates for the bilinear Hilbert transform to this new Carleson embedding theorem.
\end{abstract}

\maketitle

\section{Introduction}\label{s.intro}

Two seminal papers of Lennart Carleson of the 1960's each introduced
a new tool into analysis that had profound influence. In his
paper \cite{carleson1}, {\it Interpolation by bounded analytic functions 
and the corona problem}, he introduced  what later became known 
as Carleson measures. Carleson measures revolutionized singular integral 
theory, where they are for example related to the space $BMO$, 
and related areas in real and complex analysis. In 
his celebrated paper \cite{carleson}, {\it On convergence and growth of 
partial sums of Fourier series}, Carleson introduced what we now call time-frequency analysis. 
Time-frequency analysis has remained until now an indispensable tool for 
its original application of
controlling Fourier series pointwise as 
well as a number of other applications including $L^p$ estimates for 
the bilinear Hilbert transform. Our present paper shows that a natural
$L^p$ theory for outer measures offers a unifying language for both 
Carleson measures and time-frequency analysis. The fundamental nature
of our $L^p$ theory for outer measures might in hindsight be an 
explanation for the important role of Carleson measures.

This paper is divided  into three parts. In the first part, Sections
\ref{outermeasure} and \ref{lptheory}, we carefully develop the basic 
$L^p$ theory for outer measure spaces. This part is in nature open ended 
and will hopefully lead to further investigations of outer measure spaces. 
We have focused only on those aspects of the theory that are directly relevant 
for the applications that we have in mind in the other parts of this paper.

Outer measures are subadditive set functions. In contrast to measures, 
outer measures do not necessarily satisfy additivity for disjoint finite 
or countable collections of sets. 
Some outer measures give rise to interesting measures by restriction to 
Caratheodory measurable sets, the most prominent example is classical 
Lebesgue theory. However, general outer measures need not give rise 
to interesting measures and one is led to studying outer measure spaces
for their own sake. Lacking additivity for disjoint sets one can not expect a 
useful linear theory of integrals with respect to outer measure. A good
replacement is a sub-linear or quasi sub-linear theory, which leads 
directly to norms or quasi norms rather than integrals. 
Naturally, $L^p$ norms are among the most basic norms to consider
in the context of outer measures.

There is a rich literature on outer measures, for example on capacity theory.
In contrast to previously developed theories based on the Choquet integral, 
we do not in general base our $L^p$ theory on the outer measure of super level 
sets $\{x:f(x)>\lambda\}$ for a function $f$. Instead, we use a more subtly 
defined quantity (Definition \ref{superleveldefinition}) to replace the outer 
measure of a super level set. This new quantity, which we call 
{\it super level measure}, involves pre-defined averages 
over the generating sets of the outer measure. If the pre-defined averages 
are of $L^\infty$ type, the super level measure specializes to the outer 
measure of the super level set, but in general the two quantities are 
quite different. Once we have introduced the super level measure, 
the $L^p$ theory develops in standard fashion, and we 
develop it to the extend that we need for subsequent parts of the paper.

In the second part of this paper, Section \ref{t1section}, we describe
how outer measures can be used in the context of
Carleson measures. It is our first example of an outer measure space 
in which our refined definition of super level measure does not coincide
with the classical case of the outer measure of super level set. 
The outer measure space in question
is the upper half plane and the outer measure is generated by tents. 
The essentially bounded functions with respect
to the outer measure in this upper half plane are Carleson measures.
Moreover, the identification of a function on the boundary with the harmonic 
extension in the enterior of the upper half plane, that is the Carleson 
embedding map, turns out a basic example of a bounded map from a classical 
$L^p$ space to an outer $L^p$ space. 
We describe in Section \ref{t1section} how classical estimates for
paraproducts and $T(1)$ theorems can be proved by an outer H\"older
inequality together with such embedding theorems. In this setting, the use
of outer $L^p$ spaces is very much in the spirit of the use of tent spaces 
introduced in \cite{coifmanms}. It is an artifact in this particular
situation that our notion of outer measure may be replaced 
with more classical concepts.

The full power of the new outer $L^p$ spaces becomes evident in 
its applications in time-frequency analysis, that we discuss in the 
third part of this paper.  The underlying space for the outer measure
becomes the Cartesian product of the upper half plane with a real
line. In this setting there are no evident analogues of the tent spaces of
\cite{coifmanms}
that one could use in place of outer $L^p$ spaces.  We formulate and prove 
a novel generalized Carleson embedding theorem, Theorem \ref{gen.carl.emb}, in Section 
\ref{gentents}. It is a compressed and elegant way to state an essential 
part of time-frequency analysis. In Section \ref{bhtsection} we then 
use the generalized Carleson embedding theorem to reprove bounds for 
the bilinear Hilbert transform.

The generalized Carleson embedding theorem can also be used
as an ingredient to prove almost everywhere convergence of partial
Fourier integrals of $L^p$ functions with $2<p<\infty$. One would need
an additional Carleson embedding theorem, either analoguos to the 
interplay between energy and mass in \cite{lacey-thiele}, or analoguos 
to some vector valued version of the Carleson embedding theorem as in 
\cite{demeter-thiele}. We also envision the generalized Carleson embedding 
theorem and variants thereof to be useful in further advances in time-frequency
analysis. We were led to the theory of outer $L^p$ spaces while working
on variation norm estimates as in \cite{oberlin-et-al} in the setting of 
biest type operators as in \cite{MTT-BiestFourier}. For brevity of the 
present paper, and because of the various possible routes towards Carleson's 
theorem, we decided to restrict this exposition to a discussion of the bilinear Hilbert transform. This already captures many essential parts of Carleson's
time-frequency analysis.

Gaining a streamlined view on time-frequency analysis was the original 
motivation for the present paper, which is the outcome of a long evolution 
process.
In traditional time-frequency analysis, one proves bounds of multilinear forms passing 
through model sums
$$\Lambda=\sum_{P\in \P} c_P \prod_{j=1}^n a_j(P)\ \ ,$$
where the summation index runs through a discrete set, typically
a collection of rectangles (tiles) in the phase plane. 
The coefficients $c_P$ are inherent to the multilinear form, while the 
sequences $a_j$ each depend on one of the input functions for the multilinear
form in question. There is a multitude of examples in the literature for the
tile sequences $a_j$, the most basic example being normalized wave packet coefficients
\begin{equation}\label{mostbasic}a_j(P)=\<f,\phi_P\>
\end{equation}
for the $L^1$ normalized wave packets
$$\phi_P(x) = 2^{-k}\phi(2^{-k}x-n)e^{2\pi i 2^{-k} x l}\ \ ,$$
where $k,n,l$ are integers and parameterize the space $\P$,
and $\phi$ is a suitably chosen Schwartz class function.
These coefficients are much in the spirit of the embedding maps 
considered in Sections \ref{gentents} and \ref{bhtsection} of the
present paper. Use of such wavepackets in the study of the bilinear 
Hilbert transform appears in \cite{lacey-thiele1}.
In the dyadic model as in \cite{thiele}, one defines wave packets with 
respect to abstract Fourier analysis on the group  $\Z/2\Z$. More 
generally one can have tile semi-norms 
\begin{equation}\label{e.standard-sequence}
a_j(P)=\sup_{\phi\in \Phi}|\<f,\phi_P\>|
\end{equation}
where one maximizes and possibly also averages over a suitably chosen 
set $\Phi$ of generating functions. This approach has
been useful in \cite{thiele-uniform} and more explicitly
in \cite{MTT-uniform}. To prove bounds on Carleson's operator,
\cite{lacey-thiele} uses modified wave packets 
\begin{equation}\label{e.carleson-sequence}
a_j(P)=\<f,\phi_P1_{\{(x,N(x))\in P\}}\>
\end{equation}
for the linearizing function $N$ of the linearized Carleson operator.
In some instances such as in \cite{MTT-BiestFourier}, the definition of $a_j(P)$
may involve itself a multi-linear operator whose analysis requires another 
level of time-frequency analysis. For variational estimates of the Carleson 
operator as in \cite{oberlin-et-al}, one has variational wave packets
\begin{equation}\label{e.varcarleson-sequence}
a_j(P) = \<f,\phi_P \sum_{k} v_k1_{\{(x,N_k)\in P_2, (x,N_{k-1})\not\in P\}}\>
\end{equation}
for a sequence of linearizing functions  $N_0(x)<N_1(x)<\dots$ and a sequence of dualizing functions $v_1(x),v_2(x),\dots \ \ ,$ such that for some $r>2$ we have the uniform bound
$$\sum_{k} |v_k(x)|^{r'} = O(1) \ \ .$$
A point of the present paper is that in many of these examples the bound 
on $\Lambda$ is a H\"older inequality with respect to
an outer measure on the space $P$ :
$$|\Lambda|\le C \sup_{P\in P}|c_P| \prod_{j=1}^n \|a_j\|_{L^p_j(\P,\dots)}$$
where the dots stand for specifications of the outer measure structures 
in each example. The rest of the proof of boundedness of $\Lambda$ then 
becomes modular in that one has to prove bounds for each $j$ separately 
on the  outer $L^p$ norms of the sequences $a_j$, estimates which take for 
example the form $$\|a_j\|_{L^p_j(\P,\dots)}\le \|f_j\|_p\ ,$$
where $f_j$ may be the corresponding input function to the original
multilinear form as for example in \eqref{mostbasic}, and the 
$L^p$ norm is in the classical sense.

A novelty in the present paper is that we do not have to pass through
a discrete model form, but rather work with an outer measure space
on a continuum. This avoids both the cumbersome introduction of the discrete
spaces as well as the usual technicalities in the discretization process.

The factorization of the multilinear form in time-frequency analysis 
into embedding theorems on the one hand and an outer H\"older's 
inequality on the other hand 
is a clear modularization of the matter and promises to be useful in 
other applications of time-frequency
analysis. Indeed, we were explicitly studying the modularization
process because with Camil Muscalu we were considering a program
outlined in \cite{do-muscalu-thiele} of estimating multilinear 
forms with nested  levels of time-frequency analysis. 

We are grateful to Mariusz Mirek for carefully reading
an early version of this manuscript and pointing out many corrections.
We are grateful to Pavel Zorin-Kranich for pointing out many corrections
and an error on the last pages of a previous version posted on arxiv, 
that was overcome by proving a generalized Carleson embedding theorem 
with parameters $\alpha$ and $\beta$ and by removing any claim about explicit 
dependence on such parameters in the theorem on the bilinear Hilbert transform.
Thanks to much stronger known uniform estimates for the bilinear Hilbert transform as in
\cite{grafakos-li}, tracking of the dependence on these parameters in our
proof was not our key point. 
We also thank Yumeng Ou for pointing out an error in a previously  
posted version that was overcome by changing the exponent of $\beta$
in Corollary \ref{tentcorollary}, with some minor impact on the rest of
the section. We thank an anonymous referee for a valuable list of
suggestions to improve this exposition.
We are grateful to Stefan M\"uller, Alexander Volberg, Igor Verbitzky 
and Nguyen Cong Phuc  for discussions on capacity theory, which is a much studied 
example for outer measures.
We finally are grateful for much feedback on outer 
measures during a season of conferences in which the ideas of this present 
paper were announced, and for the many suggestions on this exposition that
we have received.

\section{Outer measure spaces}\label{outermeasure}

\subsection{Outer measures}

An outer measure or exterior measure on a set $X$ is a monotone and 
subadditive function on the collection of subsets of $X$ with values in the extended nonnegative
real numbers, and with the value $0$ attained by the empty set.  

\begin{definition}[Outer measure]
Let $X$ be a set. An outer measure on $X$ is a function
$\mu$ from the collection of all subsets of $X$ to $[0,\infty]$
that satisfies the following properties:
\begin{enumerate}
\item 
If $E\subset E'$ for two subsets of $X$, then
$\mu(E)\le \mu(E')$.
\item $\mu(\emptyset)=0$.
\item
If $E_1,E_2,\dots$ is a countable collection of sets in $X$, then
\begin{equation}
\label{countsubadd}
\mu(\bigcup_{j=1}^\infty E_j)\le \sum_{j=1}^\infty \mu(E_j)\ .
\end{equation}
\end{enumerate}
\end{definition}

In the examples we have in mind, the space $X$ is an infinite complete metric space
and thus uncountable. The set of all subsets of $X$ has then even larger cardinality than the continuum, 
and can only be organized in abstract ways. The description of an outer measure then typically
comes in two steps: First one specifies concretely a quantity that we may call pre-measure 
on a small collection of subsets, and then one passes abstractly from the pre-measure to 
the outer measure by means of covering an arbitrary subset by sets in the small collection.
This covering process is the intuition behind the adjective {\it outer} in the term {\it outer measure}.

\begin{proposition}[Abstract generation of outer measure by a concrete pre-measure]\label{generate.outer}
Let $X$ be a set and $\E$ a collection of subsets of $X$. 
Let $\sigma$ be a function from $\E$ to $[0,\infty)$.
Define for an arbitrary subset $E$ of $X$ 
$$\mu(E):=\inf_{\E' }\sum_{E'\in \E'} \sigma(E')\ ,$$
where the infimum is taken over all countable subcollections $\E'$ of $\E$
which cover the set $E$, that is whose union contains $E$. Here we understand that an empty sum is $0$.
Then $\mu$ is an outer measure. 
\end{proposition}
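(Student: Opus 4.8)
The plan is to verify the three defining properties of an outer measure directly from the formula for $\mu$, treating the countable subadditivity \eqref{countsubadd} as the main point and the other two as immediate consequences of the definition. Throughout, the only mild care needed concerns the conventions: if $E$ admits no countable covering by members of $\E$, the infimum is over the empty set and $\mu(E)=+\infty$, which is permitted since $\mu$ is allowed to take the value $\infty$; and the empty sum is $0$ by fiat.

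First I would record monotonicity. If $E\subset E'$, then every countable subcollection $\E'\subset\E$ whose union contains $E'$ also has union containing $E$; hence the infimum defining $\mu(E)$ ranges over a family of coverings that includes all those appearing in the infimum defining $\mu(E')$, and therefore $\mu(E)\le\mu(E')$. For property (2), the empty subcollection of $\E$ is a countable subcollection whose union, the empty union, equals $\emptyset\supset\emptyset$; with the stated convention its associated sum is $0$, so $\mu(\emptyset)\le 0$, and since $\mu$ takes values in $[0,\infty]$ we conclude $\mu(\emptyset)=0$.

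The substance is countable subadditivity. Let $E_1,E_2,\dots$ be a countable collection of subsets of $X$ and put $E=\bigcup_{j=1}^\infty E_j$. If $\sum_j\mu(E_j)=\infty$ there is nothing to prove, so assume this sum is finite; then each $\mu(E_j)<\infty$, so for each $j$ there does exist a countable covering of $E_j$ by members of $\E$. Fix $\varepsilon>0$ and choose, for each $j$, a countable subcollection $\E_j'\subset\E$ with $E_j\subset\bigcup_{E'\in\E_j'}E'$ and
\[
\sum_{E'\in\E_j'}\sigma(E')\le\mu(E_j)+\varepsilon 2^{-j}.
\]
Then $\E'':=\bigcup_{j=1}^\infty\E_j'$ is a countable union of countable subcollections of $\E$, hence itself a countable subcollection of $\E$, and its union contains $\bigcup_j E_j=E$, so $\E''$ is admissible in the infimum defining $\mu(E)$. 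Since forming the union can only identify repeated sets and $\sigma\ge 0$, we have $\sum_{E'\in\E''}\sigma(E')\le\sum_{j}\sum_{E'\in\E_j'}\sigma(E')$, and therefore
\[
\mu(E)\le\sum_{E'\in\E''}\sigma(E')\le\sum_{j=1}^\infty\bigl(\mu(E_j)+\varepsilon 2^{-j}\bigr)=\sum_{j=1}^\infty\mu(E_j)+\varepsilon.
\]
Letting $\varepsilon\to 0$ yields \eqref{countsubadd}, which completes the verification that $\mu$ is an outer measure.

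I do not expect a genuine obstacle here: this is the standard passage from a concrete pre-measure to the induced outer measure, and the whole argument is the $\varepsilon 2^{-j}$ trick together with the fact that a countable union of countable families is countable. The points that merit a word of care are exactly the boundary conventions flagged above (no covering, empty covering, empty sum) and the direction of the inequality $\sum_{E'\in\E''}\sigma(E')\le\sum_{j}\sum_{E'\in\E_j'}\sigma(E')$, which relies on the nonnegativity of $\sigma$; I would state the proposition and this proof mainly to pin down these conventions for use in the later sections.
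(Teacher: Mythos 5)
Your proof is correct and follows essentially the same approach as the paper's: the $\varepsilon 2^{-j}$ trick for countable subadditivity, with monotonicity and $\mu(\emptyset)=0$ as immediate consequences of the covering definition. You are somewhat more scrupulous than the paper about the edge cases (the case $\sum_j\mu(E_j)=\infty$, the possibility that no countable cover exists, and the fact that forming $\E''$ may merge repeated sets), but these refinements do not change the substance of the argument.
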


The concrete pre-measure requires the data $\E$, and $\sigma$. 
For simplicity we will often omit explicit mention of $\E$, since 
$\E$ is implicitly determined as the domain of $\sigma$.
The proof of the proposition is basic and standard, we 
reproduce it here for emphasis.
\begin{proof}
We need to prove the three defining properties of outer measures.

The empty collection of subsets covers the empty set, which shows 
$\mu(\emptyset)= 0$ since the empty sum of nonnegative numbers is $0$. 

If $F\subset F'$ for two subsets of $X$, then every cover of
$F'$ is a cover of $F$ and hence $\mu(F)\le \mu(F')$.
Let $F_1,F_2,\dots $ be a countable collection of subsets of $X$
and pick $\epsilon>0$. Find for each $i$ a countable subcollection $\E_i$ of $\E$ 
which covers $F_i$ and satisfies $$\sum_{E\in \E_i} \sigma(E)\le \mu(F_i)+ \epsilon 2^{-i}\ .$$ 
Then the union $\E'$ of the collections $\E_i$ covers the union of the sets $F_i$ and satisfies
$$\sum_{E\in \E'} \sigma(E) \le (\sum_i \mu(F_i))+ \epsilon\ . $$

Since $\epsilon$ was arbitrary, we conclude that $\mu(\bigcup F_i)\le \sum_i \mu(F_i)$. 
\end{proof}

It is in general not true that for $E\in \E$ we have $\sigma(E)=\mu(E)$, however
this identity can be established in many examples in practice. Clearly this identity 
holds precisely if for every set $E\in \E$ and every cover of $E$ by a countable
subcollection $\E'$ of $\E$, we have
\begin{equation}\label{ecoverc}
\sigma(E)\le \sum_{E'\in \E'}\sigma(E')\ .
\end{equation}
Then the most efficient cover of $E$ is by the trivial collection$\{E\}$,
which establishes $\sigma=\left. \mu\right|_\E$.

We did not allow $\sigma$ to take value $\infty$. This is no restriction,
since if we had $\sigma(E')=\infty$ for some $E'\in \E$, then using 
the set $E'$ in any cover of $E$ will make the sum 
$\sum_{E'\in \E'} \sigma(E')$ equal to $\infty$, a value that is as already 
the default even if no cover of $E$ exists at all. 

If the collection $\E$ is countable, the contribution of sets $E\in \E$ with $\sigma(E)=0$
trivializes. Namely, we may consider the union $E_0$ of the countably many generating
sets with pre-measure $0$. Then $E_0$ has outer measure zero, and we can construct
an outer measure on $X\setminus E_0$ which reflects the structure of the outer measure on $X$
but does not contain any generating set with pre-measure $0$.

\subsection{Examples for outer measures}
\label{examplesection}

\subsubsection*{Example 1: Lebesgue measure via dyadic cubes}
Let $X$ be the Euclidean space $\R^m$ for some $m\ge 1$ and let 
$\E$ be the set of all dyadic cubes, that is all cubes of the form
$$Q=[2^{k}n_1, 2^{k}(n_1+1))\times \dots \times [2^{k}n_m, 2^{k}(n_m+1))$$
with integers $k,n_1,\dots ,n_m$.
For each dyadic cube $Q$ we set
$$\sigma(Q)=2^{mk}\ .$$
Then $\sigma$ generates an outer measure which is the classical Lebesgue 
outer measure on $\R^m$. We have $\sigma(Q)=\mu(Q)$ for every dyadic cube.
This latter fact requires a bit of work, in fact it is one of the 
more laborious items in the standard introduction of Lebesgue measure.

\subsubsection*{Example 2: Lebesgue measure via balls}
Let $X=\R^m$ as above and let $\E$ be the set of all open balls
$B_r(x)$ 
with 
radius
 $r$ and 
center 
$x\in \R^m$.
Let $\sigma(B_r(x))=r^m$ for each such ball.
Then $\sigma$ generates a multiple of Lebesgue outer measure,
and again we have $\sigma=\left. \mu\right|_\E$.

If one desires a countable generating set, one may restrict
the collection of generating sets to the collection of balls
which have rational radius and rational center. This choice 
will result in the same outer measure.

\subsubsection*{Example 3: Outer measure generated by tents} 
Let $X=\R\times (0,\infty)$ be the open upper half plane 
and let $\E$ be the set of tents, that is open isosceles triangles 
of the form (see Figure 1 in Section \ref{t1section})
$$T(x,s)=\{(y,t)\in \R\times (0,\infty): t<s, |x-y|<s-t\}$$
for some pair $(x,s)\in \R\times (0,\infty)$ which describes 
the tip of the tent. Note that the constraint $t<s$ is implied by the
constraint $|x-y|<s-t$, but it is kept for emphasis. 
Define $\sigma(E)=s$ for any such tent, and note
that $\sigma(E)$ is equal to $\frac 12\sigma_L(\pi(E))$ where $\pi(E)$ is 
the projection of $E$ onto the first coordinate and 
thus an open ball in $\R$, and $\sigma_L$ is the generator of
Lebesgue outer measure on $\R$
described in Example $2$.

By projection onto the first coordinate
it easily follows from Example $2$ that $\mu$ satisfies (\ref{ecoverc}).
Again one obtains the same outer measure restricting the collection
of generating sets to the tents with rational tip.

\subsubsection*{Example 4: Capacity}
We restrict attention to a particular example of capacity, more examples
can be found in the survey \cite{adams}.
Let $X=\R^n$ with $n\ge 3$ and let $\E$ be the collection of open sets in $X$.
Define the kernel  $K(x):=|x|^{2-n}\ ,$ which is a multiple of the
classical Newtonian kernel.
Let $\sigma$ assign to each open set its capacity with respect to $K$,
that is the least upper bound for the total mass $\|\nu\|$ of a
positive Borel measure $\nu$ which has compact support in $E$ and 
satisfies $\|\nu*K\|_\infty\le 1$. Note that $\sigma(E)>0$ for every nonempty 
open set $E$, this can be seen  
by testing with
a measure $\nu$ associated with a smooth nonnegative density supported in 
a small compact ball contained in $E$.  

To see Property (\ref{ecoverc}),
assume $E$ is some open set covered by a countable collection $\E'$
of open sets. 
Let $ \nu$ be a measure supported on
a compact set $F\subset E$ such that $\|\nu*K\|_\infty\le 1$. Then
$$\|\nu\|\le \sum_{E'\in \E'}\|\nu 1_{E'}\|$$
$$\le 
\sum_{E'\in \E'}\sup_{F\subset E'}
\|\nu 1_F\|$$
$$\le 
\sum_{E'\in \E'} \sigma(E')  \sup_{F\subset E'}
\|(\nu 1_{F})*K\|_{\infty}$$
$$\le \sum_{E'\in \E'} \sigma(E')
\|\nu*K\|_\infty\le  \sum_{E'\in \E'} \sigma(E')\ . $$
Since $\E'$ was arbitrary, this proves $(\ref{ecoverc})$.

\subsection{Remarks on measurable sets}

Outer measures are used in classical textbooks such as \cite{wz} as a 
stepping stone towards the introduction of measures.
In measure theory, one is interested in equality in \eqref{countsubadd}
under the additional assumption that the sets $E_i$ are pairwise disjoint. 
Such equality does not follow in general from the properties of outer measure.
A sufficient additional criterion is that each of the sets $E_i$ is 
measurable, as in the following definition.

\begin{definition}[Measurability]
Let $\mu$ be an outer measure on a set $X$ generated by a pre-measure
on a collection $\E$.
An arbitrary subset $F$ of $X$ is called measurable  if for every generating set $E\in \E$ we have
$$\mu (F\cap E ) + \mu(F^c\cap E) = \mu(E) \ .$$
\end{definition}

We note that if $F$ is measurable, then it also satisfies the Caratheodory
criterion that for arbitrary subset $G$ of $X$ we have
$$\mu (F\cap G ) + \mu(F^c\cap G) = \mu(G)\ .$$
We briefly sketch the argument. If $\mu(G)$ is infinite, then it is easy to see that one of the
outer measures on the left hand side has to be infinite as well.
If $\mu(G)$ is finite, pick $\epsilon>0$ and a cover $\E'$ of $G$
by generating sets such that
$$\sum_{E\in \E'} \sigma(E)\le \mu(G)+\epsilon\ .$$
Then we have
$$\mu(G)\le \mu(F\cap G)+\mu(F^c\cap G)\le \sum_{E\in \E'}\mu(F\cap E)+\sum_{E\in \E'}\mu(F^c\cap E)$$
$$\le \sum_{E\in \E'} \mu(F\cap E)+\mu(F^c\cap E)=\sum_{E\in \E'} \mu(E) \le \sum_{E\in \E'} \sigma(E)\le \mu(G)+\epsilon\ .$$
Since $\epsilon$ was arbitrary, it follows that the first inequality in this line of reasoning
is indeed an equality.

In Example 1 above the measurable sets are called
Lebesgue measurable. To see existence of many Lebesgue measurable sets,
one observes that dyadic cubes are Lebesgue measurable. This follows from 
two observations: First one may estimate the outer measure of $F$ by coverings
with cubes of side length at most that of the given cube $E$. Second, each 
such small cube is either contained in $E$ or disjoint from $E$ allowing
to split the covering into two disjoint collections, of which one covers 
$F\cap E$ and the other covers $F\cap E^c$. 

One can show in general that the collection of measurable sets 
is closed under countable union and countable intersection, thus from
Lebesgue measurability of dyadic cubes one can conclude Lebesgue measurability
of all Borel sets in $\R^m$.  

In contrast, no set other than $\emptyset$ and $X$ is measurable in 
Example 3. For assume we are given a nontrivial subset $E$ of $X$, let
$(x_0,s_0)$ be a point in the boundary of $E$ and consider a tent
$T(x,s)$ which contains $(x_0,s_0)$ and satisfies $s<2s_0$.
Then we find points $(y,t)\in E\cap T(x,s)$ and $(y',t')\in E^c\cap T(x,s)$
in the vicinity of $(x_0,s_0)$ such that $s<t+t'$. Then we have
$$\mu(T(x,s))=\sigma(T(x,s))=s<t+t'\le \mu(T(x,s)\cap E)+\mu(T(x,s)\cap E^c)\ ,$$
where we used that if a set $F$ contains a point $(y,t)$, then $\mu(F)>t$ 
because any cover of $F$ needs to contain a tent with height at least $t$.
The last display shows that the set $E$ is not measurable.

In Example 4, it is well known that no bounded open set $E$ is measurable.

Namely, let $E_1$ and $E_2$ be disjoint bounded open sets  such that $\dist(E_1,E_2) > 0$ 
and set $E:=E_1\cup E_2$. Let $\nu$ be a  positive Borel  measure on a compact subset
of $E$ with $\|\nu*K\|_\infty \le 1$. Since $E$ is bounded,   for some finite constant $M$ that depends on the diameter of $E$ and $n$ it holds that
$$\|\nu\| \le M \inf_{x\in E}(\nu*K)(x) \le M \|\nu*K\|_{\infty} \le   M \ \ .$$

In particular, it follows that $\sigma(E),\sigma(E_1),\sigma(E_2) < \infty$ (they are positive from a previous discussion). Then, by inner regularity
of Borel measures, we obtain
$$\|\nu\|= \sum_{j=1}^2 \|\nu1_{E_j}\|\le \sum_{j=1}^2 \sigma(E_j)\|(\nu 1_{E_j})*K\|_{\infty}  \ \ .$$
Using the fact that $\sigma$ satisfies the countably subadditive property \eqref{ecoverc}, we obtain $\mu(E_j)=\sigma(E_j)$. Using harmonicity of $\nu1_{E_j}*K$ in the interior of $E_j^c$, it follows that
$$\|\nu\|\le \sum_{j=1}^2 \mu(E_j)\|(\nu 1_{E_j})*K\|_{\infty} $$
$$\le \sum_{j=1}^2 \mu(E_j)\|(\nu 1_{E_j})*K\|_{L^\infty(\overline{E_j})}$$
$$\le \sum_{j=1}^2 \mu(E_j)(\|\nu*K\|_{\infty} -\inf_{x\in \overline{E_j}} ((\nu 1_{E_{3-j}})*K)(x))$$
$$\le \sum_{j=1}^2 \mu(E_j) -
\sum_{j=1}^2 \mu(E_j)\inf_{x\in \overline{E_j}} ((\nu 1_{E_{3-j}})*K)(x)\ .
$$
Since $E$ is bounded, it follows that for some finite positive constant $M$ that depends on the diameter of $E$ and $n$ we have
$$\inf_{x\in \overline{E_j}}((\nu1_{E_{3-j}})*K)(x) \ge \frac 1 M \|\nu1_{E_{3-j}}\|$$
It follows that 
$$\|\nu\| \le  
\sum_{j=1}^2 \mu(E_j) - \sum_{j=1}^2 \mu(E_j)\frac 1{M} \|\nu1_{E_{3-j}}\| $$
$$\le \sum_{j=1}^2 \mu(E_j) - \frac 1 M \min(\mu(E_1),\mu(E_2)) \sum_{j=1}^2 \|\nu1_{E_{3-j}}\|$$
$$\le \sum_{j=1}^2 \mu(E_j) - \frac 1 M \min(\mu(E_1),\mu(E_2))  \|\nu\|$$
Since $\mu(E_1)>0$ and $\mu(E_2)>0$, it follows that for some constant $c>0$ that depends only on $E_1,E_2,n$ it holds that
$$\|\nu\| \le \frac 1{1+c} \sum_{j=1}^2 \mu(E_j)$$
Taking supremum over all such $\nu$ it follows that $\mu(E)=\sigma(E)<\mu(E_1)+\mu(E_2)$, thus
neither $E_1$ nor $E_2$ is measurable.
While in Example 3 the lack of measurable sets is intuitively caused by
the scarceness of the collection of generating sets, the collection $\E$ 
in this example is very rich and can hardly be blamed for the shortage of 
measurable sets.
 
\subsection{Functions and sizes}

We  propose an $L^p$ theory for functions on outer measure spaces.
One possible way of introducing an $L^p$ norm of a nonnegative function $f$
and $1\le p<\infty$ is via the following definition:
\begin{equation}\label{choquet}
(\int_0^{\infty} p\lambda^{p-1} \mu(\{x\in X: f(x)>\lambda\})\, d\lambda)^{1/p}\ .
\end{equation}
In many instances, this is the correct definition. However, we propose
a different formula, which in many examples such as Lebesgue
theory coincides with the above, but differs in full generality.
The motivation for our definition is that it appears more useful in 
the applications that we have in mind.

Our different approach already finds a motivation in the efficiency of encoding 
of functions in classical Lebesgue theory. Classical coding describes functions 
as assignment of a value to every point in the space $X$. For an $L^p$ function
this assignment has to be consistent with the measurability structure.  The set 
of such assignments has a very large cardinality, which is only reduced after 
consideration of equivalence classes of $L^p$ functions. This detour over sets of 
large cardinality can be avoided by coding functions via their averages over dyadic 
cubes. There are only countably many such averages, and by the Lebesgue Differentiation 
theorem these averages contain the complete information of the equivalence 
class of the $L^p$ function.

Unlike in the above definition of $L^p$ norm, which regards the function
$f$ as a pointwise assignment, we propose to build the $L^p$ theory on outer measure
spaces via averages over generating sets. The theory then splits again into a concrete
and abstract part, parallel to the construction of outer measures by 
generating sets. There will be a concrete procedure to assign to a function averages over 
generating sets, and further on there will be an abstract procedure to define the
$L^p$ norms of functions from such averages.
The concrete averaging procedure itself is based on some other measure theory
(which by itself might be an outer measure theory, but in the current paper 
we will not delve into such higher level iteration of the theory). We will
consider this other measure theory as concrete external input into the outer 
measure theory, while the genuine part of the outer measure theory is 
the  abstract passage from the concrete averages to outer $L^p$ norms.

The class of functions that we will be able to take $L^p$ norms of will
depend on the concrete averaging procedure we choose. To avoid too
abstract a setup we shall assume that $X$ is a metric space, and that 
every set of the collection $\E$ is Borel. We shall assume the concrete
averaging procedure will allow to average positive functions in the class 
$\B(X)$, 
the set of Borel measurable functions on $X$.
If the set $X$ is countable, a case that exhibits many of the 
essential ideas of the theory, the space $\B(X)$ is the space
of all functions on $X$.

As linearity is closely related with measurability, in the absence of
measurability we will not require averages to be linear but merely sub 
linear or even quasi sub linear. We will call these averages "sizes".

\begin{definition}[Size]\label{d.size}
Let $X$ be a metric space. Let $\sigma$ be a function on a collection
$\E$ of Borel subsets of $X$ and let $\mu$ be the outer measure generated by $\sigma$.
A size is a map
$$S: \B(X)\to [0,\infty]^\E$$
satisfying for every $f,g\in \B(X)$ and every $E\in  \E$
the following properties:
\begin{enumerate}
\item Monotonicity: if $|f|\le |g|$, then 
$S(f)(E)\le S(g)(E)$.
\item Scaling: $S(\lambda f)(E)= |\lambda| S(f)(E)$ for every $\lambda\in \C$.
\item Quasi-subadditivity: 
\begin{equation}\label{quasi.sub}
S(f+g)(E)\le C[S(f)(E)+S(g)(E)]
\end{equation}
for some constant $C$ depending only on $S$ but not on
$f,g,E$.
\end{enumerate}
\end{definition}

Note that (1) above implies $S(f)(E)=S(|f|)(E)$ for all $f$ and $E$.
Hence our theory is essentially one of  nonnegative  functions, and the
size needs initially be only defined for  nonnegative 
Borel functions and can then be extended via the above identity
to all functions.

We discuss sizes for Examples 1 through 4, and give a number of
forward looking remarks on particular aspects of the outer $L^p$
theory to be developed.

In Lebesgue theory in Example 1, we define for every Borel function $f\in \B(X)$
and every cube $Q$
$$S(f)(Q)=\mu(Q)^{-1}\int_Q |f(x)|\, dx\ .$$
The integral is in the Lebesgue sense. 
Note the coincidence that the 
measure theory used to define the size is the same as the measure 
theory associated with the outer measure $(X,\mu)$. This coincidence
is a particular feature of Example 1 (and 2 below).  The circularity of
this setup does not invalidate our theory, certainly Lebesgue measure
can be introduced without reference to the outer integration theory
that we develop in this paper.

Note that $S(f)(Q)$ is finite for every locally integrable function on 
$\R^m$. For such function we may define the ``martingale''
$$M(f)(Q):=\mu(Q)^{-1} \int_Q f(x)\, dx\ .$$
A consistency condition applies for $M(f)$, namely, the value of $M(f)$ 
on a dyadic cube is  equal to the average of the values 
on the dyadic subcubes of half the sidelength.  By the dyadic Lebesgue 
Differentiation theorem, the martingale uniquely determines 
the value of the function $f$ at every Lebesgue point, and this uniquely
determines the equivalence class of the measurable function $f$ in
Lebesgue sense. As noted before, the martingale is a very efficient way of 
encoding the function $f$. The space $L^\infty(\R^m)$ 
can be described as all bounded maps from $\E$ to $\C$ which satisfy the 
consistency condition. This example is a strong indication that a useful 
general theory of outer measure may be built out of assigning values
to elements $E\in \E$. Indeed, it would be possible in this example
to built the theory entirely out of maps $M:\E\to \C$
satisfying the consistency condition, without reference to any 
Borel function $f$.

Turning to Example 2, we may similarly define
$$S(f)(B)=\mu(B)^{-1}\int_B |f(x)|\, dx $$
for every ball $B$. Again, these averages determine
$f$ by the Lebesgue Differentiation theorem. In this case
there does not exist an easy algebraic consistency condition
that identifies maps from $\E$ to $\C$ that arise from
locally integrable functions $f$ as the average
$$M(f)(B):=\mu(B)^{-1} \int_B f(x)\, dx\ .$$
This provides the evidence that it is impracticable to 
build a theory of functions on outer measure space entirely
out of maps from $\E$ to $\C$ and without reference to a function $f$.

In Example 3 we make an assignment of a value to each tent by averaging
a Borel measurable function on the tent: 
\begin{equation}\label{primitivetentsize}
S(F)(T(x,s)):=s^{-1} \int_{T(x,s)} |F(y,t)|\, dy\, \frac{dt}{t}\ .
\end{equation}

This averaging is based on weighted Lebesgue measure on $X$, which however
is not the outer measure $(X,\mu)$ in this Example 3.
In the literature, one often works with the class of Borel measures
$\nu$ on $X$ rather than the class of Borel measurable functions, and 
defines
$$S(\nu)(T(x,s)):=|s|^{-1} |\nu| (T(x,s))\ \ .$$
If the function $S(\nu)$ is bounded, the 
measure $\nu$ is called a Carleson measure in the literature, the 
concept of which dates back to the seminal paper \cite{carleson1}.
The space of Carleson measures may be considered the space 
$L^\infty$ on the outer measure space, as will be discussed more
thoroughly further below.

A specific Carleson measure of interest is the following.
For some function $f\in L^\infty(\R)$ consider the 
function $F$ on $X$ defined by
$$F(y,t)=\int f(z)t^{-1} \phi(t^{-1}(y-z))\, dz\ ,$$
where $\phi$ is some smooth and rapidly decaying function 
of integral zero. Then  $|F(y,t)|^2 dy \frac{dt}t$ turns out to be
a Carleson measure\footnote{For details see the special case $p=\infty$ of \eqref{sfestimate}}. The quadratic nature of this example
suggests to define a size
$$S(F)(T(x,s))=\left( s^{-1}\int _{T(x,s)}|F(y,t)|^2 \, dy\, \frac{dt}t\right)^{1/2}\ .$$
This example provides evidence why we do not try to base
a theory of outer measure on linear averaging as could have been
done in the example of martingales or the linear averaging over 
balls.

In Example 4, the most commonly (implicitly) used size is
$$S(f)(E)=\sup_{x\in E}|f(x)|\ .$$
Rather than the $L^1$ or $L^2$ based averages from the previous examples, this is an
$L^\infty$ based average. Such an $L^\infty$ average has the effect that the more generally
defined outer $\L^p$ norms we will introduce specialize to the case of the integral (\ref{choquet}), 
which is frequently
referred to as the Choquet integral in the context of capacity theory. We
conclude this very brief discussion of Example 4 with the remark that it may be 
interesting to compare the capacitary strong type inequalites \cite{adams}, whose intensive study
goes back to the work of Maz'ya, with the embedding theorems that we discuss further below.

\subsection{A note on subadditivity}

We have chosen to only demand quasi subadditivity in the definition 
of size. Many sizes will be subadditive, which means that the constant in 
(\ref{quasi.sub}) can be chosen to be $1$. The general constant
in (\ref{quasi.sub}) allows for certain more general examples
, for example $L^p$ type sizes with $p<1$. It also sets the 
stage for quasi-subadditivity throughout our discussion, which will 
simplify some of the arguments.
 
Note that $L^p$ type sizes occur naturally in factorizations.
Generalizing the classical factorization  $|f| =|f|^{\alpha} |f|^{1-\alpha}$  for 
a Borel measurable function $f$ and $0<\alpha<1$, one may consider  
modified sizes $S^{[\alpha]}$ defined,  for every nonnegative function $f$, by
\begin{equation}\label{e.sizefactorization}
S^{[\alpha]}(f)(E) := \Big[S(f^{\frac 1{\alpha}})(E)\Big]^\alpha \ \ .
\end{equation}
One then has the factorization
$$S(f) = S^{[\alpha]}(f^{\alpha})  \ \times \ S^{[1-\alpha]}(f^{1-\alpha}) \ \ .$$
Even if $S$ is subadditive, the fractional size $S^{[\alpha]}$ with $0<\alpha<1$ might only be quasi-subadditive.

\subsection{Essential supremum and super level measure}

This section contains the most subtle points in the development
of our $L^p$ theory on outer measure spaces, with definitions carefully 
adjusted to the precise setup and the applications we have in mind.
To develop an $L^p$ theory we need a space $X$, which we assume to be a 
metric space. We need a pre-measure $\sigma$ on a collection 
$\E$ of Borel subsets, generating an outer measure $\mu$ on $X$. Finally, we need
a size $S$. So as to not overburden the notation, we collect this
data into a triple $(X,\sigma,S)$, because $\sigma$ determines 
the generating collection and the outer measure. We use the letters
$\E$ and $\mu$ for these as standing convention. We call the
triple $(X,\sigma,S)$ an outer measure space.

\begin{definition}[Outer essential supremum] \label{essential supremum}
Assume $(X,\sigma,S)$ is an outer measure space.
Given a Borel subset $F$ of $X$,
we define the outer essential supremum of $f\in \B(X)$ on $F$ to be
$$\essup_F S(f):= \sup_{E\in \E} S(f1_F)(E)\ .$$
\end{definition}

We emphasize that the values $S(f)(E)$ for fixed $f$ and all $E\in \E$ are 
in general not enough information to determine the essential supremum of 
$f$ on a 
Borel set $F$ other than $X$ or $\emptyset$. 
It is important to refer back to the
function $f$ and truncate it according to the set $F$.

We also emphasize that, unlike in Examples 1 and 2, the outer 
essential supremum in general does not coincide with the essential 
supremum of $f$ on $F$ in the Borel sense. In Example 3 with size
given by \eqref{primitivetentsize}, we note that every Lebesgue 
integrable Borel function supported above a line $t=t_0>0$ 
in the space $X$ has finite outer essential supremum. Namely
the size of such a function with respect to some tent vanishes
if the tent is small and is bounded above by $t_0^{-2}$ times the 
Lebesgue integral of the function for arbitrary tent.
On the other hand, if we define the size $S(f)(E)$ to be the
supremum of $f$ on the set $E$, then
the outer essential supremum defined above coincides with the 
classical supremum on the set $F$, under the mild assumption 
that $F$ can be covered by generating sets $E$.

The following properties of the outer essential supremum 
are inherited from the corresponding properties for the size.
We have for every $f,g\in \B(X)$ and every Borel set $F\subset X$
\begin{enumerate}
\item Monotonicity: if $|f|\le |g|$, then
$\essup_FS(f)\le  \essup_FS(g)$. 
\item Scaling: for $\lambda\in \C$ we have 
$\essup_FS(\lambda f)=  |\lambda| \essup_FS(g)$ .
\item Quasi-subadditivity: for some constant $C<\infty$
independent of $f$, $g$, $F$, 
we have $$ \essup_FS(f+g) 
\le C (\essup_FS(f)+ \essup_FS(g))\ .$$
\end{enumerate}

The use of the outer essential supremum is the main subtle point 
in the following definition.

\begin{definition}[Super level measure]
\label{superleveldefinition}
Let $(X,\sigma, S)$ be an outer measure space.
Let $f\in \B(X)$ and $\lambda>0$. We define
\begin{equation}\label{largerlambdaset}
\mu(S(f)>\lambda)
\end {equation}
to be the infimum of all values $\mu(F)$, where $F$ runs through
all Borel subset of $X$ which satisfy 
$$\essup_{X\setminus F}S(f) \le \lambda\ .$$
\end{definition}

We emphasize once more that in general $\mu(S(f)>\lambda)$ is not
the outer measure of the Borel set where $|f|$ is larger 
than $\lambda$, even though it is precisely that in many special 
examples such as the case of Lebesgue outer measure or 
in cases where the outer essential supremum above
coincides with the classical supremum.

We obtain the following properties of super level measure.
\begin{enumerate}
\item Monotonicity: if $|f|\le |g|$, then
$$\mu(S(f)>\lambda)\le \mu(S(g)>\lambda)\ .$$
\item Scaling: for a complex number $\lambda'$ we have 
$$\mu(S(\lambda' f)>|\lambda'| \lambda) = \mu(S(f)>\lambda)\ .$$
\item Quasi-subadditivity: for some constant $C<\infty$
independent of $f$, $g$, $F$, 
$$\mu(S(f+g)>C\lambda)\le \mu(S(f)>\lambda)+\mu(S(g)>\lambda)\ .$$
\end{enumerate}

Note that a constant $C=2$ would be necessary in general in the last 
inequality even if $S$ was sub-additive.

\section{Outer $\L^p$ spaces}\label{lptheory}

The definition of outer $L^p$ space and subsequent development
of the theory of outer $L^p$ spaces follows classical
lines of reasoning, once the crucial definitions of the outer 
essential supremum and the super level measure from the previous 
section have replaced their classical counterparts. 
The only minor deviation comes in the proof of the triangle 
inequality, since we do not have a satisfactory 
theory of duality in outer $L^p$ spaces. This manifests
itself in a loss of a factor $2$ in the triangle inequality.

\begin{definition}[Outer $L^\infty$]\label{d.linfty} 
Let $(X,\sigma, S)$ be an outer measure space.
Let $f\in \B(X)$, then we define
$$\|f\|_{L^\infty(X,\sigma,S)}
:=\essup_X S(f)=\sup_{E\in \E} S(f)(E) $$
and $L^\infty(X,\sigma,S)$ to be the space
of elements $f\in \B(X)$ for which $\sup_{E\in \E} S(f)(E)$ is finite.
For notational convenience we define 
$$L^{\infty,\infty}(X,\sigma,S):=L^\infty(X,\sigma,S)\ .$$
\end{definition}

As the Example 3 of Carleson measures shows, $f\in L^\infty(X,\sigma,S)$ 
need not be an essentially bounded function on $X$ in the Borel sense.

\begin{definition}[Outer $L^p$]\label{d.levelset} 
Let $0< p<\infty$.
Let $(X,\sigma, S)$ be an outer measure space.
We define for $f\in \B(X)$ :
$$\|f\|_{L^p(X,\sigma,S)}
:=\left(\int_0^\infty p\lambda^{p-1} \mu(S(f)>\lambda) \, d\lambda\right)^{1/p}\ ,$$
$$\|f\|_{L^{p,\infty}(X,\sigma,S)}:=\left(\sup_{ \lambda >0}  \lambda^{p} \mu(S(f)>\lambda)\right)^{1/p}\ .  $$
Moreover we define $L^p(X,\sigma,S)$ and 
$L^{p,\infty}(X,\sigma,S)$ to be the spaces of elements in $\B(X)$
such that the respective quantities are finite.
\end{definition}
Clearly $\mu(S(f)>\lambda)$ is monotone in $\lambda$, so that the integral
in the definition of $\|f\|_{L^p(X,\sigma,S)}$ is well defined and a number
in $[0,\infty]$. As in the classical case we trivially have
$$\|f\|_{L^{p,\infty}(X,\sigma,S)}\le \|f\|_{L^p(X,\sigma,S)}\ .$$

The following properties hold, with elementary proofs 
that follow in most cases from
the corresponding statements for super level measure.
\begin{proposition}[Basic properties of outer $L^p$]
Let $(X,\sigma,S)$ be an outer measure space and let
$f,g$ be in $\B(X)$. Then we have for
$0<p\le \infty$
\begin{enumerate}
\item Monotonicity: 
If $|f|\le |g|$, then 
$\|f\|_{L^p(X,\sigma,S)} \le \|g\|_{L^p(X,\sigma,S)}$.
\item Scaling:
$\|\lambda f\|_{L^p(X,\sigma,S)}=|\lambda| \|f\|_{L^p(X,\sigma,S)}$
for any $\lambda\in \C$.
\item Quasi-subadditivity: there is a constant $C$ independent of $f,g$ such that
$$\| f+g\|_{L^p(X,\sigma,S)}\le  C(
\|f\|_{L^p(X,\sigma,S)}+\|g\|_{L^p(X,\sigma,S)})\ .
$$
\end{enumerate}
Moreover we have for $\lambda>0$ 
$$\|f\|_{L^p(X,\lambda \sigma,S)} = \lambda^{1/p}\|f\|_{L^p(X,\sigma,S)}\ .$$
Corresponding statements hold for the spaces 
$L^{p,\infty}(X,\sigma,S)$.

\end{proposition}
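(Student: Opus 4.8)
The plan is to reduce every assertion to the corresponding property of the outer essential supremum (in the case $p=\infty$) or of the super level measure (in the case $0<p<\infty$) recorded in Section~\ref{outermeasure}, and then to perform one elementary manipulation of the defining integral, respectively supremum, in the variable $\lambda$. For $p=\infty$ there is essentially nothing to do: since $\|f\|_{L^\infty(X,\sigma,S)}=\essup_X S(f)$, monotonicity, scaling and quasi-subadditivity are exactly properties (1)--(3) of the outer essential supremum specialized to $F=X$; and because $\essup_X S(f)$ does not involve $\sigma$ at all (only $\E$ and $S$), the identity $\|f\|_{L^\infty(X,\lambda\sigma,S)}=\|f\|_{L^\infty(X,\sigma,S)}$ holds trivially, which is the asserted $\sigma$-scaling with the convention $1/\infty=0$.

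For $0<p<\infty$ I would first dispose of monotonicity: if $|f|\le|g|$ then $\mu(S(f)>\lambda)\le\mu(S(g)>\lambda)$ for every $\lambda>0$ by monotonicity of the super level measure, and integrating both sides against the nonnegative weight $p\lambda^{p-1}\,d\lambda$ — respectively multiplying by $\lambda^p$ and taking $\sup_\lambda$ — gives the claims for $L^p$ and $L^{p,\infty}$. For scaling by $\lambda'\in\C$, the case $\lambda'=0$ being trivial, I would use the scaling property $\mu(S(\lambda'f)>\lambda)=\mu(S(f)>\lambda/|\lambda'|)$ and substitute $\lambda=|\lambda'|t$, which pulls the factor $|\lambda'|^p$ out of the integral or the supremum. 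For the $\sigma$-scaling I would observe that $\lambda\sigma$ generates the outer measure $\lambda\mu$ on the same collection $\E$, hence the outer essential supremum, and with it the family of competitor sets $F$ in Definition~\ref{superleveldefinition}, is unchanged; therefore the super level measure computed from $\lambda\sigma$ is $\lambda$ times the one computed from $\sigma$, and inserting this into the defining integral produces the factor $\lambda$, i.e.\ $\lambda^{1/p}$ after taking the $p$-th root.

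The one step carrying any content is quasi-subadditivity. Let $C$ be the constant in the quasi-subadditivity of the super level measure, so $\mu(S(f+g)>C\lambda)\le\mu(S(f)>\lambda)+\mu(S(g)>\lambda)$ for all $\lambda>0$. The substitution $\lambda\mapsto C\lambda$ in the definition of the $L^p$ norm then gives
\[
\|f+g\|_{L^p(X,\sigma,S)}^p=C^p\int_0^\infty p\lambda^{p-1}\,\mu(S(f+g)>C\lambda)\,d\lambda\le C^p\bigl(\|f\|_{L^p(X,\sigma,S)}^p+\|g\|_{L^p(X,\sigma,S)}^p\bigr),
\]
and the same computation with $\sup_\lambda\lambda^p$ in place of the integral handles $L^{p,\infty}$. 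Taking $p$-th roots and invoking the elementary inequality $(a^p+b^p)^{1/p}\le 2^{1/p}(a+b)$ for $a,b\ge0$ yields the stated quasi-subadditivity with a constant depending only on $p$ and the quasi-subadditivity constant of $S$.

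I do not expect a genuine obstacle: once Section~\ref{outermeasure} is in place the proof is bookkeeping. The only points deserving attention are that for $p<1$ one cannot expect the triangle inequality but only the quasi-version, which is the source of the extra factor $2^{1/p}$; and that all the sets $F$ entering the super level measure are Borel, so that the identity $\mu_{\lambda\sigma}(F)=\lambda\,\mu_\sigma(F)$ used in the $\sigma$-scaling step is legitimate — this is immediate from the definition of the generated outer measure in Proposition~\ref{generate.outer}.
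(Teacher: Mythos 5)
Your proposal is correct and follows exactly the route the paper has in mind: the authors state that these properties "follow in most cases from the corresponding statements for super level measure" without writing out the computations, and your reductions to monotonicity, scaling, and quasi-subadditivity of $\mu(S(\cdot)>\lambda)$, together with the elementary change of variables $\lambda\mapsto C\lambda$ (resp.\ $\lambda\mapsto|\lambda'|\lambda$) in the layer-cake integral or the supremum, are precisely what is being left to the reader. Your handling of the $\sigma$-scaling is also right — the key observation that the outer essential supremum and hence the family of competitor sets $F$ does not depend on $\sigma$, so that only the outer measure $\mu$ gets rescaled, is the one point that requires care. One small remark on attribution of constants: even for $p\ge 1$ and a subadditive size $S$, the quasi-subadditivity constant of the super level measure is already $2$ (as the paper notes just after the proposition), so the final constant is really $C_{\mathrm{super\ level}}\cdot 2^{1/p-1}$ rather than coming solely from the $p<1$ phenomenon; your phrasing blames the loss a bit too exclusively on the elementary inequality $(a^p+b^p)^{1/p}\le 2^{1/p}(a+b)$, but the bound you prove is correct.
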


Note that the proof of quasi-subadditivity for $L^p$ with $p<\infty$
is based on quasi-subadditivity of super level measure, which 
yields a constant $C$ different from $1$ even if the size $S$ is subadditive.
It might be interesting to study conditions under which one may have 
subadditivity for $L^p$.

We turn to the behaviour of outer $L^p$ spaces under mappings 
between outer measure spaces.
Note that Borel measurable functions as well as classical $L^p$ 
functions are typically pulled back under a continuous map, while 
in contrast Borel measures are pushed forward under such maps.
This is one of the motivations for us to use the class of Borel
measurable functions to develop the theory of outer $L^p$ functions,
even though much of the theory can be developed for Borel measures
as well.

Let $X_1$ and $X_2$ be two metric spaces and let $\Phi:X_1\to X_2$ be
a continuous map. For $j=1,2$ let $\E_j$ be a collection of Borel 
sets covering $X_j$ 
and let $\sigma_j:\E_j\to [0,\infty]$ be a function generating an
outer measure $\mu_j$ on $X_j$. 
Let $S_1$ and $S_2$ be sizes turning $(X_1,\sigma_1,S_1)$
and $(X_2,\sigma_2,S_2)$ into outer measure spaces.

\begin{proposition}[Pull back]\label{pullback}
Assume that for every $E_2\in \E_2$ we have
\begin{equation}\label{mupullback}
\mu_1(\Phi^{-1} E_2)\le A \mu_2(E_2)\ .
\end{equation}
Further assume that for each $E_1\in \E_1$ there exists 
$E_2\in \E_2$  such that for every $f\in \B(X_2)$ we have
\begin{equation}\label{spullback}
S_1(f\circ\Phi)(E_1)\le B S_2(f)(E_2)\ .
\end{equation}
Then we have for every $f\in \B(X_2)$ and $0<p  \le    \infty$
and some universal constant $C$:
$$\|f\circ \Phi\|_{L^p(X_1,\sigma_1,S_1)}\le A^{1/p}B C \|f\|_{L^p(X_2,\sigma_2,S_2)}\ ,$$
$$\|f\circ \Phi\|_{L^{p,\infty}(X_1,\sigma_1,S_1)}\le A^{1/p}B C \|f\|_{L^{p,\infty}(X_2,\sigma_2,S_2)}\ .$$
\end{proposition}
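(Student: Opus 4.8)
The plan is to reduce the whole statement to a single comparison of super level measures: I claim that for every $\lambda>0$,
\[
\mu_1\big(S_1(f\circ\Phi)>B\lambda\big)\ \le\ A\,\mu_2\big(S_2(f)>\lambda\big)\ .
\]
Granting this, the bound for $0<p<\infty$ falls out immediately: one writes $\|f\circ\Phi\|_{L^p(X_1,\sigma_1,S_1)}^p$ as the layer-cake integral $\int_0^\infty p\lambda^{p-1}\mu_1(S_1(f\circ\Phi)>\lambda)\,d\lambda$ (well defined by monotonicity of the super level measure in $\lambda$, as already noted), substitutes $\lambda=B\mu$, and applies the claim; the powers of $B$ coming from $d\lambda$ and $\lambda^{p-1}$ combine to $B^p$, and together with the factor $A$ one gets $\|f\circ\Phi\|_{L^p(X_1,\sigma_1,S_1)}\le A^{1/p}B\,\|f\|_{L^p(X_2,\sigma_2,S_2)}$, so $C=1$ is admissible. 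The weak-type estimate is obtained the same way from $\sup_{\lambda>0}\lambda^p\mu_1(S_1(f\circ\Phi)>\lambda)$, again after the substitution $\lambda=B\mu$. The case $p=\infty$ does not even need the claim: for each $E_1\in\E_1$ pick the $E_2$ provided by \eqref{spullback}, so $S_1(f\circ\Phi)(E_1)\le B\,S_2(f)(E_2)\le B\,\|f\|_{L^\infty(X_2,\sigma_2,S_2)}$, and taking the supremum over $E_1\in\E_1$ gives the assertion. (We may assume $B>0$; if $B=0$ then $S_1(f\circ\Phi)\equiv 0$ and every inequality is trivial.)

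To prove the claim, fix $\lambda$ and let $F_2\subset X_2$ be any Borel set with $\essup_{X_2\setminus F_2}S_2(f)\le\lambda$; such sets exist since $F_2=X_2$ qualifies, and if none has finite $\mu_2$-measure the claim is vacuous. I would set $F_1:=\Phi^{-1}(F_2)$, which is Borel because $\Phi$ is continuous. Then $\mu_1(F_1)=\mu_1(\Phi^{-1}F_2)\le A\,\mu_2(F_2)$ is immediate from \eqref{mupullback}, so it remains to show that $F_1$ is an admissible competitor for $\mu_1(S_1(f\circ\Phi)>B\lambda)$, i.e. that $\essup_{X_1\setminus F_1}S_1(f\circ\Phi)\le B\lambda$. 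The point is that truncation commutes with pullback: $1_{X_1\setminus F_1}=1_{X_1\setminus\Phi^{-1}(F_2)}=1_{X_2\setminus F_2}\circ\Phi$, hence $(f\circ\Phi)\,1_{X_1\setminus F_1}=(f\,1_{X_2\setminus F_2})\circ\Phi$. Now for an arbitrary $E_1\in\E_1$ I apply \eqref{spullback} with the Borel function $f\,1_{X_2\setminus F_2}$ in place of $f$ and the associated $E_2\in\E_2$:
\[
S_1\big((f\circ\Phi)\,1_{X_1\setminus F_1}\big)(E_1)=S_1\big((f\,1_{X_2\setminus F_2})\circ\Phi\big)(E_1)\le B\,S_2(f\,1_{X_2\setminus F_2})(E_2)\le B\,\essup_{X_2\setminus F_2}S_2(f)\le B\lambda\ .
\]
Taking the supremum over $E_1\in\E_1$ gives $\essup_{X_1\setminus F_1}S_1(f\circ\Phi)\le B\lambda$, so $F_1$ competes in the infimum defining $\mu_1(S_1(f\circ\Phi)>B\lambda)$ and therefore $\mu_1(S_1(f\circ\Phi)>B\lambda)\le A\,\mu_2(F_2)$; taking the infimum over all admissible $F_2$ yields the claim.

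The only genuinely delicate point, and the thing to get exactly right, is the admissibility of $F_1=\Phi^{-1}(F_2)$. Two small facts underlie it: first, $g\circ\Phi\in\B(X_1)$ whenever $g\in\B(X_2)$ — true because preimages of Borel sets under a continuous map are Borel — so that \eqref{spullback} may legitimately be invoked with $g=f\,1_{X_2\setminus F_2}$ rather than with $f$ itself; second, the hypothesis \eqref{spullback} is ``local'' (one $E_2$ per $E_1$) whereas the quantity to control, $\essup_{X_1\setminus F_1}S_1(f\circ\Phi)$, is a supremum over all of $\E_1$, and these match up precisely because the outer essential supremum is itself defined as such a supremum. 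Everything after that is the routine layer-cake bookkeeping together with the linear change of variables $\lambda\mapsto B\lambda$, which is why no constant beyond $A^{1/p}B$ is lost.
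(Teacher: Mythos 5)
Your argument follows the same route as the paper — reduce to a pointwise comparison of super level measures, use $F_1:=\Phi^{-1}(F_2)$ as the competitor, and exploit that truncation commutes with pullback — and the bookkeeping with $A^{1/p}B$ is right. But there is one genuine gap, and it is precisely the step the paper spends half its proof on: you assert that $\mu_1(\Phi^{-1}F_2)\le A\,\mu_2(F_2)$ is ``immediate from \eqref{mupullback}.'' It is not. Hypothesis \eqref{mupullback} is stated only for generating sets $E_2\in\E_2$, whereas your $F_2$ is an arbitrary Borel subset of $X_2$, and the outer measure $\mu_1$ of $\Phi^{-1}F_2$ is defined via coverings in $\E_1$, not in terms of $\Phi^{-1}\E_2$. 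To pass from generating sets to arbitrary Borel sets you need the covering argument: assume $\mu_2(F_2)<\infty$, pick a countable cover $\E'_2\subset\E_2$ of $F_2$ with $\sum_{E_2\in\E'_2}\sigma_2(E_2)\le(1+\epsilon)\mu_2(F_2)$, observe $\Phi^{-1}F_2\subset\bigcup_{E_2\in\E'_2}\Phi^{-1}E_2$, and then use countable subadditivity of $\mu_1$ together with \eqref{mupullback} and $\mu_2(E_2)\le\sigma_2(E_2)$ to get $\mu_1(\Phi^{-1}F_2)\le A(1+\epsilon)\mu_2(F_2)$, then let $\epsilon\to 0$. Without this, the comparison of super level measures is not established. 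Once you insert this lemma, the rest of your write-up is correct and matches the paper's proof (the paper simply normalizes $A=B=1$ up front and takes a near-optimal $F$ rather than taking an infimum at the end, both cosmetic differences).
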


\begin{proof}
First note that by scaling properties it is no restriction to prove
the proposition with constants $A=B=1$ in \eqref{mupullback}
and \eqref{spullback}.

For every Borel set $F_2\subset X_2$ we have
$$\mu_1(\Phi^{-1} F_2)\le \mu_2(F_2)\ .$$ 
Namely, given $F_2$  without loss of generality we may assume that $\mu_2(F_2)<\infty$. Let $\E'_2\subset \E_2$ be a cover of $F_2$
which attains, up to a factor $(1+\epsilon)$ with small $\epsilon>0$, 
the outer measure of $F_2$: 
$$\sum_{E_2\in \E'_2} \mu_2(E_2)\le \sum_{E_2\in \E'_2} \sigma_2(E_2)\le  (1+\epsilon)\mu_2(F_2)\ .$$
Then we obtain
$$\mu_1(\Phi^{-1}(F_2))\le \sum_{E_2\in \E'_2} \mu_1(\Phi^{-1}(E_2))
\le \sum_{E_2\in \E'_2}\mu_2(E_2)\le (1+\epsilon) \mu_2(F_2)\ .$$
This proves the claim, since $\epsilon>0$ can be chosen arbitrarily.

Assume $F\subset X_2$ is a Borel set such that 
$$
 \mu_2(F)\le 
(1+\epsilon)\mu_2(S_2(f) >\lambda)
$$
and for every $E_2\in \E_2$ we have $S_2(f1_{F^c})(E_2)\le \lambda$.
Pick $E_1\in \E_1$, then there exists  $E_2 \in \E_2$  such that we have
$$
S_1((f\circ \Phi)(1_{\Phi^{-1}(F^c)}))(E_1)=
S_1((f1_{F^c})\circ \Phi)(E_1)$$
$$\le S_2(f1_{F^c})(E_2)\le \lambda\ ,$$
and 
hence
$$\mu_1(S_1(f\circ \Phi)\ge \lambda)\le 
\mu_1((\Phi^{-1}(F^c))^c)$$
$$\le \mu_1(\Phi^{-1}F)\le \mu_2(F)
\le  (1+\epsilon) \mu_2(S_2(f)>\lambda)\ .$$
This proves the desired inequalities for $p<\infty$.
The case $p=\infty$ follows immediately from the assumption
on sizes.
\end{proof}

\begin{proposition}[Logarithmic convexity] \label{p.convex-interpolation}
Let $(X,\sigma,S)$ be an outer measure space and let
$f\in \B(X)$.
Assume $\alpha_1+\alpha_2=1, \ \ 0< \alpha_1,\alpha_2< 1$, and
$$1/p = \alpha_1 / p_1  + {\alpha_2} / {p_2}$$
for $p_1,p_2\in (0,\infty]$ with $p_1\ne p_2$. Then
\begin{equation}\label{convexityc}
\|f\|_{L^{p}(X,\sigma,S)} 
\le C_{p,p_1,p_2}\Big(\|f\|_{L^{p_1,\infty}(X,\sigma,S)} 
\Big)^{\alpha_1} \Big(\|f\|_{L^{p_2,\infty}(X,\sigma,S)} \Big)^{\alpha_2} \ \ . 
\end{equation}
\end{proposition}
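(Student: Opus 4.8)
The plan is to follow the classical proof of the Marcinkiewicz-type interpolation inequality bounding a strong $L^p$ norm by weak $L^{p_1}$ and $L^{p_2}$ norms, using the super level measure $\mu(S(f)>\lambda)$ in place of the measure of the super level set. Write $A_i := \|f\|_{L^{p_i,\infty}(X,\sigma,S)}$, so that by Definition \ref{d.levelset} we have $\mu(S(f)>\lambda)\le A_i^{p_i}\lambda^{-p_i}$ for all $\lambda>0$ and $i=1,2$. Starting from
$$\|f\|_{L^p(X,\sigma,S)}^p = \int_0^\infty p\lambda^{p-1}\mu(S(f)>\lambda)\,d\lambda,$$
I would split the integral at a threshold $\lambda = \delta$ to be optimized, use the $L^{p_1,\infty}$ bound on one piece and the $L^{p_2,\infty}$ bound on the other, assuming without loss of generality that $p_1<p_2$ (so that $p_1<p<p_2$). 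This gives, after elementary integration, a bound of the form $C\big(A_1^{p_1}\delta^{p-p_1} + A_2^{p_2}\delta^{p-p_2}\big)$, with constants depending only on $p,p_1,p_2$; the convergence of each piece uses exactly the ordering $p_1<p<p_2$. The case $p_2=\infty$ is handled separately: then $\mu(S(f)>\lambda)=0$ for $\lambda>A_2$, so the integral is truncated at $A_2$ and only the $L^{p_1,\infty}$ bound is needed on $(0,A_2)$.

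Next I would optimize in $\delta$. Choosing $\delta$ so that the two terms balance, namely $\delta$ comparable to $A_1^{\theta}A_2^{1-\theta}$ for the appropriate $\theta$ dictated by the exponents, one computes that the exponent of $A_1$ in the resulting bound is $\alpha_1 p$ and that of $A_2$ is $\alpha_2 p$; this is the standard bookkeeping identity that follows from $1/p = \alpha_1/p_1 + \alpha_2/p_2$ together with $\alpha_1+\alpha_2=1$. Taking $p$-th roots then yields \eqref{convexityc} with a constant $C_{p,p_1,p_2}$ absorbing the numerical factors $p/(p-p_1)$ and $p/(p_2-p)$ from the two integrals. One should also dispose of the degenerate cases where $A_1$ or $A_2$ is $0$ or $\infty$ at the outset: if either is infinite the inequality is trivial, and if either is $0$ then the corresponding bound forces $\mu(S(f)>\lambda)=0$ for all $\lambda>0$, whence $\|f\|_{L^p}=0$.

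There is essentially no serious obstacle here, since all the properties of super level measure used — monotonicity in $\lambda$ and the defining weak-type bounds — are already in place from Definition \ref{superleveldefinition} and Definition \ref{d.levelset}, and the argument never needs additivity or any duality. The only point requiring a small amount of care is purely arithmetic: verifying that the balancing value of $\delta$ produces exactly the exponents $\alpha_1 p$ and $\alpha_2 p$, and checking that when $p_1<p<p_2$ both truncated integrals converge. In other words, the proof is identical in form to the classical one, with $\mu(S(f)>\lambda)$ treated as a black box nonincreasing function of $\lambda$ satisfying the two weak-type estimates.
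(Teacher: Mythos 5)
Your proposal is correct and follows essentially the same route as the paper: split the integral $\int_0^\infty p\lambda^{p-1}\mu(S(f)>\lambda)\,d\lambda$ at a threshold, apply the two weak-type bounds on the respective pieces, and balance. The only cosmetic difference is that the paper first rescales $f$ so that $\|f\|_{L^{p_1,\infty}}^{p_1}=\|f\|_{L^{p_2,\infty}}^{p_2}$, which makes the optimal split point $\lambda=1$, whereas you optimize over a free threshold $\delta$; these are interchangeable.
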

\begin{proof} 
Assume without loss of generality 
$p_1<p_2$. We first consider the case $p_2<\infty$.
If either of the norms on the right-hand-side of (\ref{convexityc}) vanishes,
then $\mu(S(f) > \lambda)$ vanishes for all $\lambda>0$ and then the 
left-hand-side of (\ref{convexityc}) vanishes as well. 
By scaling we may then assume 
$$A:=\|f\|_{L^{p_1,\infty}(X,\sigma,S)}^{p_1}=\|f\|_{L^{p_2,\infty}(X,\sigma,S)}^{p_2}\ .$$ 
Optimizing the use of these two identites we have with $p_1<p<p_2$
$$\mu(S(f)>\lambda) \le A \min(\lambda^{-p_2}, \lambda^{-p_1}) \ \ ,$$
$$\|f\|_p \le (Ap(\int_0^{1} \lambda^{p-p_1-1} d\lambda +  \int_{1}^\infty \lambda^{p-p_2-1} d\lambda))^{\frac 1 {p}} $$
$$\le C_{p,p_1,p_2} A^{1/p}=
 C_{p,p_1,p_2}\Big(\|f\|_{L^{p_1,\infty}(X,\sigma,S,)}^{p_1} 
\Big)^{\frac{\alpha_1}{p_1}} \Big(\|f\|_{L^{p_2,\infty}(X,\sigma,S)}^{p_2} 
\Big)^{\frac{\alpha_2}{p_2}} \ \ . $$
This completes the proof in case $p_2<\infty$. 
If $p_2 = \infty$, we may assume by scaling that 
$\|f\|_{L^{\infty}(X,\sigma,S)}=1$.
Then for $\lambda > 1$ we have $\mu(S(f)>\lambda) = 0 $. 
Consequently, 
$$\|f\|_{L^p(X,\sigma,S)} 
\le  (p \|f\|_{L^{p_1,\infty}(X,\sigma,S)}^{p_1} 
\int_0^{1} \lambda^{p-p_1-1} d\lambda)^{\frac 1 {p}}  \le   C_{p,p_1,p_2} 
\|f\|_{L^{p_1,\infty}(X,\sigma,S)}^{\alpha_1} \ \ .$$
\end{proof}

\begin{proposition}[H\"older's inequality]\label{p.hoelder-energy}

Assume we have a metric space $X$, three collections $\E,\E_1,\E_2$
of Borel subsets, three functions $\sigma,\sigma_1,\sigma_2$ on these
collections generating outer measures $\mu,\mu_1,\mu_2$ on $X$. 
Assume $\mu\le \mu_j$ for $j=1,2$.
Assume $S,S_1,S_2$ are three respective sizes such that
for any $E\in \E$ there exist $E_1\in \E_1$ and $E_2\in \E_2$ 
such that for all $f_1,f_2\in \B(X)$ we have
\begin{equation}\label{e.size-factorization}
S(f_1f_2)(E)\le S_1(f_1)(E_1) S_2(f_2)(E_2) \ \ .
\end{equation}
Let $p, p_1,p_2\in (0,\infty]$ such that $1/p = 1/p_1 + 1/p_2$. Then
\begin{eqnarray}
\label{e.hoelder-strong}
\|f_1f_2\|_{L^p(X,\sigma,S)} \le
2 \|f_1\|_{L^{p_1}(X,\sigma_1,S_1)}\|f_2\|_{L^{p_2}(X,\sigma_2,S_2)} \ \ .
\end{eqnarray}
\end{proposition}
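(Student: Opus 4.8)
The plan is to prove the Hölder inequality by first establishing a pointwise (in $\lambda$) inequality between the super level measures, namely a distributional estimate of the form
$$\mu(S(f_1f_2) > \lambda_1\lambda_2) \le \mu_1(S_1(f_1)>\lambda_1) + \mu_2(S_2(f_2)>\lambda_2)$$
for all $\lambda_1,\lambda_2>0$, and then integrating this against the appropriate weights to recover the $L^p$ bound. The first step is the heart of the matter and the place where the size-factorization hypothesis \eqref{e.size-factorization} is used.

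For the distributional step, fix $\lambda_1,\lambda_2>0$ and $\epsilon>0$. Choose Borel sets $F_1, F_2 \subset X$ with $\essup_{X\setminus F_j}S_j(f_j)\le \lambda_j$ and $\mu_j(F_j)\le (1+\epsilon)\mu_j(S_j(f_j)>\lambda_j)$. Set $F:=F_1\cup F_2$. Then by subadditivity of $\mu$ and the hypothesis $\mu\le\mu_j$ we get $\mu(F)\le \mu_1(F_1)+\mu_2(F_2)$. The key claim is that $\essup_{X\setminus F}S(f_1f_2)\le \lambda_1\lambda_2$. To see this, take any $E\in\E$ and pick the corresponding $E_1\in\E_1$, $E_2\in\E_2$ from \eqref{e.size-factorization}. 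Since $1_{X\setminus F} \le 1_{X\setminus F_1}$ and similarly for $F_2$, monotonicity of the sizes together with the factorization gives
$$S((f_1f_2)1_{X\setminus F})(E) = S((f_11_{X\setminus F})(f_21_{X\setminus F}))(E) \le S_1(f_11_{X\setminus F})(E_1)\,S_2(f_21_{X\setminus F})(E_2) \le S_1(f_11_{X\setminus F_1})(E_1)\,S_2(f_21_{X\setminus F_2})(E_2)\le \lambda_1\lambda_2\ ,$$
using that $E_j$ ranges over $\E_j$ so that $S_j(f_j1_{X\setminus F_j})(E_j)\le \essup_{X\setminus F_j}S_j(f_j)\le\lambda_j$. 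Taking the supremum over $E\in\E$ and then the infimum over admissible $F$ (letting $\epsilon\to0$) yields the distributional inequality. For $p=\infty$ (so $p_1=p_2=\infty$) one argues directly: for every $E$, $S(f_1f_2)(E)\le S_1(f_1)(E_1)S_2(f_2)(E_2)\le \|f_1\|_{L^\infty}\|f_2\|_{L^\infty}$, with no factor $2$ needed.

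With the distributional inequality in hand, the remaining step is the standard real-variable layer-cake computation. Assume all norms on the right are finite and nonzero (otherwise the inequality is trivial or follows by a limiting argument). By homogeneity we may normalize $\|f_1\|_{L^{p_1}(X,\sigma_1,S_1)} = \|f_2\|_{L^{p_2}(X,\sigma_2,S_2)} =: A$. For a given $\lambda>0$, split $\lambda = \lambda_1\lambda_2$ optimally; the cleanest choice is to write $\mu(S(f_1f_2)>\lambda)\le \mu_1(S_1(f_1)>\lambda^{1/2})+\mu_2(S_2(f_2)>\lambda^{1/2})$ when $p_1=p_2$, and more generally to balance the exponents by setting $\lambda_1 = \lambda^{p/p_1}$, $\lambda_2 = \lambda^{p/p_2}$ (so $\lambda_1\lambda_2 = \lambda$ since $p/p_1+p/p_2=1$). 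Then
$$\|f_1f_2\|_{L^p}^p = \int_0^\infty p\lambda^{p-1}\mu(S(f_1f_2)>\lambda)\,d\lambda \le \int_0^\infty p\lambda^{p-1}\mu_1(S_1(f_1)>\lambda^{p/p_1})\,d\lambda + \int_0^\infty p\lambda^{p-1}\mu_2(S_2(f_2)>\lambda^{p/p_2})\,d\lambda\ .$$
A change of variables $\tau = \lambda^{p/p_j}$ in the $j$-th integral turns it into $\|f_j\|_{L^{p_j}}^{p_j}$ (the factors $p$ and $p_j$ and the Jacobian conspire correctly), so the right side equals $A^{p_1} + A^{p_2}$. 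Since the normalization gives $A^{p_1}=A^{p_2}$... wait, that is only true if $A=1$; in general one keeps $\|f_1\|^{p_1}_{L^{p_1}}$ and $\|f_2\|^{p_2}_{L^{p_2}}$ separate and uses the normalization $A := \|f_1\|_{L^{p_1}}\|f_2\|_{L^{p_2}}$... the honest route is: by scaling $f_1,f_2$ separately we may assume $\|f_1\|_{L^{p_1}}=\|f_2\|_{L^{p_2}}=1$, whence the bound reads $\|f_1f_2\|_{L^p}^p \le 1 + 1 = 2$, i.e. $\|f_1f_2\|_{L^p}\le 2^{1/p}\le 2$, which is \eqref{e.hoelder-strong}.

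The main obstacle, and the only genuinely nontrivial point, is the distributional inequality --- specifically verifying that the union $F = F_1\cup F_2$ of the two near-optimal exceptional sets is simultaneously exceptional for the product, which hinges on using the $1_{X\setminus F}\le 1_{X\setminus F_j}$ monotonicity to push the truncation past the size factorization. Everything after that is bookkeeping with the layer-cake formula and homogeneity; the factor $2$ (rather than $1$) is an honest artifact of having to add two super level measures and then take a $p$-th root, exactly as flagged in the discussion preceding the proposition.
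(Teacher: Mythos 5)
Your proof is correct and follows essentially the same approach as the paper's: you choose near-optimal exceptional sets $F_1, F_2$, take the union $F = F_1 \cup F_2$, use the size factorization together with monotonicity to show $F$ is exceptional for the product at level $\lambda_1\lambda_2$, then specialize to $\lambda_j = \lambda^{p/p_j}$ and run the layer-cake computation after normalizing both right-hand norms to $1$. The only cosmetic difference is that you first state the distributional inequality for general $\lambda_1,\lambda_2$ before specializing, whereas the paper plugs in $\lambda^{p/p_j}$ from the outset.
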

\begin{proof} We assume $0< p_1,p_2<\infty$, the case 
$\max(p_1,p_2)= \infty$ can be argued similarly. Without loss of generality assume that the factors on the right hand side of \eqref{e.hoelder-strong} are  finite. 
For $j=1,2$ pick Borel sets $F_j\subset X$ such that
for every $E_j\in \E_j$ we have
$$S_j(f_j 1_{F_j^c})(E_j)\le \lambda^{p/p_j}$$
and 
$$\mu_j(F_j)\le \mu_j(S_j(f_j)>\lambda^{p/p_j})+\epsilon\ .$$
Define $F=F_1\cup F_2$.
Let $E\in \E$ be arbitrary, then by 
\eqref{e.size-factorization}
there exists  $E_1 \in \E_1$ and $E_2\in \E_2$  such that
$$
S(f_1f_2 1_{F^c})(E)\le 
S_1(f_1 1_{F^c})(E_1) S_2(f_2 1_{F^c})(E_2)$$
$$\le  S_1(f_1 1_{F_1^c})(E_1) S_2(f_2 1_{F_2^c})(E_2)\le \lambda^{p/p_1}\lambda^{p/p_2}=\lambda\ ,$$
the passage from the first to second line by monotonicity of the sizes.

It follows from 
 subadditivity of $\mu$ and domination of $\mu$ by $\mu_1$ and $\mu_2$
that for all $\lambda>0$ 
\begin{equation}\label{e.Clambda-break}
\mu(S(f_1f_2)>\lambda) \le \mu(F)\le \mu(F_1)+\mu(F_2)$$
$$\le \mu_1(F_1)+\mu_2(F_2)\le 
2\epsilon+ \sum_{i=1}^2 \mu(S_i(f_i) > \lambda^{p/p_i}) \ \ .
\end{equation}
To prove \eqref{e.hoelder-strong}
we may assume via scaling that
$$\|f_1\|_{L^{p_1}(X,\sigma_1,S_1)} = \|f_2\|_{L^{p_2}(X,\sigma_2,S_2)} =1 \ \ .$$
Then \eqref{e.hoelder-strong} follows from \eqref{e.Clambda-break},
using that $\epsilon>0$ is arbitrarily small,
$$\int p\lambda^{p-1} \mu(S(f_1f_2)>\lambda) d\lambda \le 
\int p\lambda^{p-1} \sum_{i=1}^2 \mu(S_i(f_i) > \lambda^{p/p_i}) d\lambda$$
$$ = \sum_{i=1}^2 \int p_i \lambda^{p_i-1} \mu(S_i(f_i) > \lambda ) d\lambda
 = 2 \ \ .$$
\end{proof}

In the following proposition, let $L^p(Y,\nu)$ denote the classical 
space of complex valued functions on a measure space $(Y,\nu)$ such that 
$\|f\|_{L^p(Y,\nu)}:= (\int_Y |f(x)|^p d\nu)^{1/p}$ is finite.

The following proposition is an outer measure version of classical
Marcinkiewicz interpolation, which in practice is used to obtain
strong bounds in a range of exponents $p$ from weak bounds at the
endpoints of the range.

\begin{proposition}[Marcinkiewicz interpolation]\label{p.marcinkiewicz} 
Let $(X,\sigma,S)$ be an outer measure space.
Assume $1\le p_1<p_2 \le \infty$.   Let $T$ be an operator that maps $L^{p_1}(Y,\nu)$ and $L^{p_2}(Y,\nu)$ 
to the space of Borel functions on $X$, such that for any
$f,g\in L^{p_1}(Y,\nu)+L^{p_2}(Y,\nu)$ and $\lambda\ge 0$ we have
\begin{enumerate}
\item Scaling: $|T(\lambda f)|=|\lambda T(f)|$.
\item Quasi subadditivity: $|T(f+g)|\le C(|T(f)|+|T(g)|)$.
\item Boundedness properties:
$$ \|T(f)\|_{L^{p_1,\infty}(X,\sigma,S)}\le A_1\|f\|_{L^{p_1}(Y,\nu)}\ ,$$
$$ \|T(f)\|_{L^{p_2,\infty}(X,\sigma,S)}\le A_2\|f\|_{L^{p_2}(Y,\nu)}\ .$$
\end{enumerate}
Then we also have
$$ \|T(f)\|_{L^{p}(X,\sigma,S)}\le A_1^{\theta_1}A_2^{\theta_2}C_{p_1,p_2,p}\|f\|_{L^{p}(Y,\nu)}\ ,$$
where $p_1<p<p_2$ and $\theta_1$, $\theta_2$ are such that
$$\theta_1+\theta_2=1\ ,$$
$$ \frac 1p = \frac {\theta_1} {p_1}+\frac {\theta_2}{p_2}\ .$$
\end{proposition}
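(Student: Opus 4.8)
The plan is to follow the classical Marcinkiewicz argument, splitting each input function at a height depending on $\lambda$, and to exploit that the super level measure inherits enough of the structure of an ordinary level set function for the usual bookkeeping to go through. First I would reduce, by the scaling hypothesis (1) on $T$ and the scaling property of the outer $L^p$ quasinorm, to the normalization $\|f\|_{L^p(Y,\nu)}=1$; the conclusion then just asks for a bound by a constant. I would also treat $p_2=\infty$ as a separate (easier) case at the end, since there the high part contributes nothing once $\lambda$ exceeds a multiple of $\|f_\lambda^{\text{low}}\|_\infty$, so I focus on $p_1<p_2<\infty$.

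The core step is: for each $\lambda>0$ write $f = f^{\lambda}_{\text{low}} + f^{\lambda}_{\text{high}}$ where $f^{\lambda}_{\text{low}} = f\cdot 1_{\{|f|\le c\lambda\}}$ and $f^{\lambda}_{\text{high}} = f\cdot 1_{\{|f|> c\lambda\}}$, for a constant $c$ to be chosen. By quasi-subadditivity (2) of $T$ and the quasi-subadditivity of the super level measure (the property $\mu(S(h_1+h_2)>C\lambda)\le \mu(S(h_1)>\lambda)+\mu(S(h_2)>\lambda)$ recorded just after Definition \ref{superleveldefinition}), one gets, after adjusting constants,
$$\mu(S(T(f))>\lambda) \le \mu\big(S(T(f^{\lambda}_{\text{low}}))>c'\lambda\big) + \mu\big(S(T(f^{\lambda}_{\text{high}}))>c'\lambda\big)\ .$$
To the first term apply the $L^{p_2,\infty}$ bound: $\mu(S(T(f^{\lambda}_{\text{low}}))>c'\lambda)\le (A_2/(c'\lambda))^{p_2}\|f^{\lambda}_{\text{low}}\|_{L^{p_2}(Y,\nu)}^{p_2}$. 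To the second apply the $L^{p_1,\infty}$ bound: $\mu(S(T(f^{\lambda}_{\text{high}}))>c'\lambda)\le (A_1/(c'\lambda))^{p_1}\|f^{\lambda}_{\text{high}}\|_{L^{p_1}(Y,\nu)}^{p_1}$. Then I would insert this into $\|T(f)\|_{L^p(X,\sigma,S)}^p = \int_0^\infty p\lambda^{p-1}\mu(S(T(f))>\lambda)\,d\lambda$, use Fubini on $Y\times(0,\infty)$ for each of the two pieces exactly as in the scalar Marcinkiewicz theorem (for the high part $\int_0^\infty \lambda^{p-1-p_1}\int_{\{|f|>c\lambda\}}|f|^{p_1}\,d\nu\,d\lambda$, and symmetrically for the low part with the opposite inequality on the indicator), and collect the resulting constant as $C_{p_1,p_2,p}A_1^{\theta_1}A_2^{\theta_2}$ after optimizing the free constant $c$; the exponents $\theta_1,\theta_2$ come out of the requirement $1/p = \theta_1/p_1+\theta_2/p_2$, $\theta_1+\theta_2=1$ precisely as classically.

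The main obstacle — and the only place where this is not literally the textbook proof — is justifying the first displayed inequality, i.e. that the super level measure of $T(f)$ is controlled by those of $T(f^{\lambda}_{\text{low}})$ and $T(f^{\lambda}_{\text{high}})$. This is exactly quasi-subadditivity of the super level measure, which is listed as an established property, so the substantive content is just that $T(f) = T(f^{\lambda}_{\text{low}} + f^{\lambda}_{\text{high}})$ and hence $|T(f)|\le C(|T(f^{\lambda}_{\text{low}})| + |T(f^{\lambda}_{\text{high}})|)$ by hypothesis (2), followed by monotonicity of $S$ and of the super level measure in the function; one then absorbs the two constants $C$ (one from (2), one from the quasi-subadditivity of $\mu(S(\cdot)>\lambda)$) into a single factor and rescales $\lambda$. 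A secondary point requiring a line of care is that $f\in L^p(Y,\nu)$ indeed lies in $L^{p_1}(Y,\nu)+L^{p_2}(Y,\nu)$ with $f^{\lambda}_{\text{high}}\in L^{p_1}$, $f^{\lambda}_{\text{low}}\in L^{p_2}$ for every $\lambda>0$ (so that the hypotheses of $T$ apply to each piece), which is the standard truncation estimate $\int|f^{\lambda}_{\text{high}}|^{p_1}\le (c\lambda)^{p_1-p}\int|f|^p$ and its low counterpart. Apart from these, everything is the routine Fubini computation, which I would not write out in full.
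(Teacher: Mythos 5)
Your proof is correct and follows essentially the same route as the paper: split $f$ at the threshold $c\lambda$, invoke quasi-subadditivity of $T$ and then of the super level measure, apply the two weak-type hypotheses, and close with the standard Fubini computation, handling $p_2=\infty$ separately. The only cosmetic difference is in how the factor $A_1^{\theta_1}A_2^{\theta_2}$ is produced — the paper normalizes $\nu$ at the outset to reduce to $A_1=A_2=1$, whereas you keep $A_1,A_2$ and optimize over the free cut constant $c$; both are standard and equivalent.
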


\begin{proof} 
We may normalize $\nu$ to become $\tilde{\nu}=\lambda^{-1}\nu$, with $\lambda$ chosen so that
$$A_1 \lambda^{1/p_1}=A_2\lambda ^{1/p_2}:=A\ .$$
Then
$$A_1^{\theta_1}A_2^{\theta_2}\lambda^{1/p}=A \lambda^{-\theta_1/p_1-\theta_2/p_2} \lambda^{1/p}=A\ .$$
Thus it suffices to prove the theorem with $A_1=A_2=A$.
Further normalizing $T$ to become $\tilde{T}=A^{-1}T$, we observe 
that it suffices to prove the theorem with $A_1=A_2=1$.

If $f_1\in L^{p_1}(Y,\nu)$ and $f_2\in L^{p_2}(Y,\nu)$, then
we have
for every $E\in \E$
$$S(T(f_1+f_2))(E)  \le C\Big( S(Tf_1)(E) +  S(Tf_2)(E)) \Big)\ .$$
Then we also have for some possibly different constant $C$:
\begin{equation}\label{e.muTsublinear}
\mu(S(T(f_1+f_2))> C\lambda) \le \mu(S(Tf_1)>\lambda) + \mu(S(Tf_2)>\lambda) \ .
\end{equation}
We first assume: $0<p_1<p_2<\infty$.  Let $f\in L^p(Y,\nu)$.  We decompose $f= f_{1,\lambda} + f_{2,\lambda}$ with $f_{1,\lambda}=f 1_{|f|>\lambda}$. It is clear that $f_{j,\lambda}\in L^{p_j}(Y,\nu)$. 
Using \eqref{e.muTsublinear} we obtain
$$\mu(S(Tf)>C\lambda) \le C \sum_{j=1}^2 \lambda^{-p_j}\|f_{j,\lambda}\|_{p_j}^{p_j}$$
$$=C 
\lambda^{-p_1} \int_Y |f|^{p_1} 1_{|f|> \lambda}\, d\nu(y)+
C \lambda^{-p_2} \int_Y |f|^{p_2} 1_{|f|\le \lambda}\, d\nu(y)\ ,$$
and therefore
$$\|Tf\|_{L^p(X,\sigma,S)}=\left(p \int_0^\infty \lambda^{p-1} \mu(S(Tf)>\lambda)\, d\lambda\right)^{1/p}
$$
$$  \le
 C\Big(  \int_Y |f|^{p_1} (\int_0^{|f|}  \lambda^{p-p_1-1}   d\lambda) d\nu +   \int_Y  |f|^{p_2}  (\int_{|f|}^\infty \lambda^{p-p_2-1}   d\lambda )d\nu\Big)^{1/p} \ \ ,$$
$$\le  C \|f\|_{L^p(Y,\nu)} \ \ .$$
It remains to consider the case $p_1<p_2=\infty$. We similarly decompose $f= f_{1,\lambda} + f_{2,\lambda}$ with $f_{1,\lambda}=f 1_{|f|>c\lambda}$ for suitable small $c$
to be determined momentarily.   Then  
$$\| Tf_{2,\lambda}\|_{L^\infty(X,\sigma,S)} \le \|f_{2,\lambda}\|_{L^\infty(Y,\nu)} < c\lambda \ \ .$$
It follows 
from \eqref{e.muTsublinear} that with sufficiently small $c$
$$\mu(S(Tf)> \lambda) \le \mu(S(Tf_{1,\lambda})> \lambda/C) + \mu(S(Tf_{2,\lambda})> \lambda/C) 
= \mu(S(Tf_{1,\lambda})> \lambda/C)  \ \ .$$
Consequently,
$$\|Tf\|_{L^p(X,\sigma,S)} \le C \Big(\int_0^\infty \lambda^{p-1} \mu(S(Tf)>\lambda)d\lambda\Big)^{1/p}$$
$$\le C  \Big(\int_0^\infty \lambda^{p-1} \mu(S(Tf_{1,\lambda})>\lambda/C)d\lambda\Big)^{1/p}\ .$$

Then we proceed as before to obtain
$$\|Tf\|_{L^p(X,\sigma,S)} \le  
C \int_Y  |f|^{p_1}  (\int_{|f|}^\infty\lambda^{p-p_1-1}   d\lambda )d\nu\Big)^{1/p} \le C \|f\|_{L^p(Y,\nu)}\ \ .$$
\end{proof}

The following is a simple variant of a
classical fact about measures: If a measure
$\nu$ on a space is absolutely continuous
with respect to another measure $\mu$, and if the Radon
Nikodym derivative of $\nu$ with respect to
$\mu$ is bounded, then the total mass of $\nu$
can be estimated by the total mass of $\mu$. 

\begin{proposition}\label{measuredomination} 
Assume $(X,\sigma,S)$ is an outer measure space and
assume that about every point in $X$ there is
an open ball for which there exists
$E\in \E$ which contains the ball.
Let $\nu$ be a positive Borel measure on $X$.
Assume that for every $f\in \B(X)$ and
for every $E\in \E$ we have
$${\int_E |f| \, d\nu} \le C S(f)(E) \sigma(E)\ .$$

Then, for every $f\in \B(X)$ with finite $\|f\|_{L^\infty(X,\sigma,S)}$
we have:
$$|\int_X f\, d\nu |\le C \|f\|_{L^1(X,\sigma,S)}\ ,$$
where the implicit constant $C$ in particular is independent of $\|f\|_{L^\infty(X,\sigma,S)}$.
\end{proposition}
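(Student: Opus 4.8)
The plan is to bound $\int_X|f|\,d\nu$ directly by $\|f\|_{L^1(X,\sigma,S)}$ through a dyadic layering adapted to the super level measure, with the hypothesis $\int_E|g|\,d\nu\le C\,S(g)(E)\,\sigma(E)$ serving as the only bridge between $\nu$ and the size. First reduce to $\|f\|_{L^1(X,\sigma,S)}<\infty$ (otherwise nothing to prove); this forces $\mu(S(f)>\lambda)<\infty$ for every $\lambda>0$ by monotonicity. Using $\|f\|_{L^\infty(X,\sigma,S)}<\infty$, fix an integer $k_0$ with $\mu(S(f)>2^{k_0})=0$. For each integer $i<k_0$ choose a Borel set $F_i\subset X$ with $\essup_{X\setminus F_i}S(f)\le 2^i$ and $\mu(F_i)\le\mu(S(f)>2^i)+\epsilon_i$ for a small $\epsilon_i>0$ to be fixed later, and set $F_i=\emptyset$ for $i\ge k_0$. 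Put $G_j:=\bigcup_{i\ge j}F_i$; these are Borel, nested with $G_j\supseteq G_{j+1}$, empty for $j\ge k_0$, of finite outer measure $\mu(G_j)\le\sum_{i=j}^{k_0-1}(\mu(S(f)>2^i)+\epsilon_i)$, and (by monotonicity of the outer essential supremum under the inclusion $X\setminus G_j\subset X\setminus F_j$) satisfy $\essup_{X\setminus G_j}S(f)\le 2^j$.

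The first key estimate is obtained by applying the hypothesis to $g=f1_{X\setminus G_j}\in\B(X)$: for every $E\in\E$,
$$\int_{E\setminus G_j}|f|\,d\nu\le C\,S(f1_{X\setminus G_j})(E)\,\sigma(E)\le C\,2^j\,\sigma(E)\,,$$
since $S(f1_{X\setminus G_j})(E)\le\essup_{X\setminus G_j}S(f)\le 2^j$. Second, cover each $G_j$ by a countable subcollection $\E'_j\subset\E$ with $\sum_{E\in\E'_j}\sigma(E)\le\mu(G_j)+\epsilon'_j$ (possible because $\mu(G_j)<\infty$), so that by countable subadditivity of $\nu$,
$$\int_{G_j\setminus G_{j+1}}|f|\,d\nu\le\sum_{E\in\E'_j}\int_{E\setminus G_{j+1}}|f|\,d\nu\le C\,2^{j+1}\bigl(\mu(G_j)+\epsilon'_j\bigr)\,.$$
Writing $G_{-\infty}:=\bigcup_j G_j$, one has the partition $X=(X\setminus G_{-\infty})\sqcup\bigsqcup_{j<k_0}(G_j\setminus G_{j+1})$ into countably many Borel sets, hence by countable additivity of $\nu$,
$$\int_X|f|\,d\nu=\int_{X\setminus G_{-\infty}}|f|\,d\nu+\sum_{j<k_0}\int_{G_j\setminus G_{j+1}}|f|\,d\nu\,.$$

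For the first term, $X\setminus G_{-\infty}\subset X\setminus F_i$ for all $i$, so $\essup_{X\setminus G_{-\infty}}S(f)\le 2^i$ for all $i$, hence equals $0$; the hypothesis then gives $\int_E|f1_{X\setminus G_{-\infty}}|\,d\nu=0$ for every $E\in\E$, and the assumption that every point of $X$ lies in an open ball contained in some generating set lets one conclude $\int_{X\setminus G_{-\infty}}|f|\,d\nu=0$ (covering $X$ by countably many generating sets). For the sum, rearrange by Tonelli:
$$\sum_{j<k_0}2^{j+1}\mu(G_j)\le\sum_{j<k_0}2^{j+1}\sum_{i=j}^{k_0-1}\mu(S(f)>2^i)=\sum_{i<k_0}\mu(S(f)>2^i)\sum_{j\le i}2^{j+1}=4\sum_{i<k_0}2^i\mu(S(f)>2^i)\,,$$
and since $\lambda\mapsto\mu(S(f)>\lambda)$ is nonincreasing, $\sum_i 2^i\mu(S(f)>2^i)\le 2\int_0^\infty\mu(S(f)>\lambda)\,d\lambda=2\|f\|_{L^1(X,\sigma,S)}$. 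Choosing the $\epsilon_i$ and $\epsilon'_j$ so that the resulting error terms sum to at most $\epsilon$ and letting $\epsilon\to 0$ gives $\int_X|f|\,d\nu\le C'\,\|f\|_{L^1(X,\sigma,S)}$ with $C'$ a fixed multiple of $C$, and therefore $\bigl|\int_X f\,d\nu\bigr|\le\int_X|f|\,d\nu\le C'\,\|f\|_{L^1(X,\sigma,S)}$.

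The main obstacle is conceptual rather than computational: $S(f)(E)$ for an individual $E$ is controlled by nothing, only $S(f1_{X\setminus F})(E)$ is, which is what forces the detour through the nested exceptional sets $G_j$ and a separate cover of each one, in place of a single level-set decomposition. One must then make sure every $\mu(G_j)$ is finite — this is precisely where $\|f\|_{L^\infty}<\infty$ is used, via $k_0$ — so that the covers exist, and check that the resulting double sum telescopes geometrically back to $\int_0^\infty\mu(S(f)>\lambda)\,d\lambda$. A secondary point is the vanishing of the leftover integral over $X\setminus G_{-\infty}$: this is the only place the covering hypothesis enters, and it implicitly relies on $X$ being coverable by countably many generating sets (automatic in all the examples of interest).
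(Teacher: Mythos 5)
Your proof is correct and follows essentially the same route as the paper's: choose dyadic exceptional sets $F_i$ witnessing the super level measure at levels $2^i$, cover by generating sets, decompose the union into layers where the essential supremum is controlled, apply the hypothesis on each generating set, and sum geometrically. The only stylistic difference is that you cover the nested unions $G_j=\bigcup_{i\ge j}F_i$ rather than the individual sets $F_k$ (the paper covers each $F_k$ directly, so $\sum_{E\in\E_k}\sigma(E)$ is immediately comparable to $\mu(S(f)>2^k)$ and the Tonelli rearrangement is avoided), and you carry explicit $\epsilon$-losses where the paper absorbs factors of $2$; both yield the same bound, and the layers $G_j\setminus G_{j+1}$ you integrate over coincide with the paper's $F_k\setminus\bigcup_{l>k}F_l$.
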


\begin{proof}
We may assume that $\mu(S(f)>\lambda)$ is finite for every
$\lambda>0$, or else nothing is to prove.
For each $k\in \Z$ consider a set $F_k$
such that
$$\essup_{F_k^c}S(f) \le 2^k\ ,$$
$$\mu(F_k)\le 2\mu(S(f)>2^k)\ .$$
Cover $F_k$ by a countable subcollection $\E_k$
of $\E$ such that
$$\sum_{E\in \E_k}{\sigma(E)}\le 2\mu(F_k)\ .$$

Let $F=\bigcup_k F_k$ and note that for every 
sufficiently small open ball $B$ about a point in $X$
we can find $E\in \E$ such that  $B  \subset E$, thus 
$$\int_{B\cap F^c} |f|1_{F^c}\, d\nu\le C S(f1_{F^c})(E) \sigma(E) =0\ .$$
Hence
$$\int_X|f|\, d\nu=\int_F |f|\, d\nu\ .$$
Since we may assume $F_k=\emptyset$ for sufficiently large $k$ we have 
$$|\int_X f\, d\nu |\le \sum_k \int_{F_k\setminus \bigcup_{l>k} F_l} |f|\, d\nu
\le\sum_k \sum_{E\in \E_k} \int_{E\setminus \bigcup_{l>k} F_l} |f|\, d\nu$$
$$\le \sum_k \sum_{E\in \E_k} S(f1_{F_{k+1}^c}) \mu(E)\le 
C\sum_k \sum_{E\in \E_k} 2^k  \sigma(E)$$
$$\le C\sum_k 2^k \mu(S(f)>2^k)\le C\|f\|_{L^1(X,\sigma,S)}\ .$$
This completes the proof of the proposition.
\end{proof}

\section{
Carleson embedding, paraproducts, and the $T(1)$ theorem}
\label{t1section}

This section contains classical results rephrased in the language of 
outer measure spaces utilizing Example 3 of Section \ref{examplesection}. 
Readers interested in reviewing the 
classical theory are referred to \cite{stein}. A novelty
of our approach is the interpretation of Carleson embedding
theorems as boundedness of certain  maps from a classical $L^p$ to an outer $L^p$ space.
As a consequence, outer H\"older's inequality can be used to
prove various multi-linear estimates such as paraproduct
estimates or a core version of a $T(1)$ theorem.

\subsection{Carleson embeddings}

We consider the upper half plane $X=\R\times (0,\infty)$, we  
let $\E$ be the collection of tents 
$$T(x,s)=\{(y,t)\in \R\times (0,\infty): t<s, |x-y|<s-t\}\ ,$$
and we set $\sigma(T(x,s))=s$ as in Example 3.

\setlength{\unitlength}{0.4mm}
\begin{figure}[ht]
\caption{The tents $T(x,s)$ and $T(x',s')$.}
\begin{picture}(220,100)
\put(10,10){\vector(0,1){70}}
\put(5,10){\vector(1,0){195}}
\put(205,5){$y$}
\put(5,85){$t$}

\put(40,10){\line(1,1){20}}
\put(60,30){\line(1,-1){20}}
\put(100,10){\line(1,1){40}}
\put(140,50){\line(1,-1){40}}
\put(50,40){$(x,s)$}
\put(130,60){$(x',s')$}
\put(60,30){\circle*{3}}
\put(140,50){\circle*{3}}
\end{picture}
\end{figure}
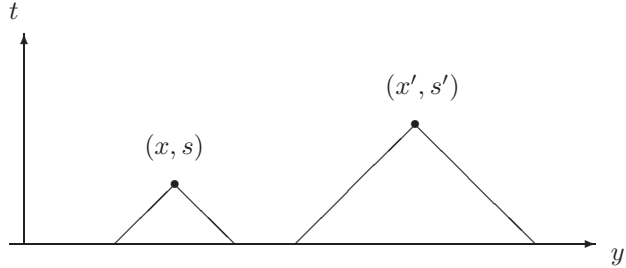

Define for $1\le p<\infty$ 
the sizes
$$S_p (F)(T(x,s)):=(s^{-1}\int_{T(x,s)} |F(y,t)|^p \, dy\, \frac{dt}t)^{1/p}\ \ ,$$
where we have used standard Lebesgue integration in $\R\times (0,\infty)$, and
$$S_\infty (F)(T(x,s)):=\sup_{(y,t)\in T(x,s)} |F(y,t)| \ \ .$$

Let $\phi$ be a smooth function on the real line 
supported in $[-1,1]$ and define for a locally integrable function $f$
on the real line
\begin{equation}\label{definefphi}
F_\phi(f)(y,t):= \int f(x) t^{-1}\overline{\phi(t^{-1}(y-x))}\, dx
\ . \end{equation}
The mapping $f\to F_\phi(f)$ is an embedding of a space
of functions on the real line into a space of functions in the upper
half plane reminiscent of {\it Carleson embeddings}. Thus we call the following estimates
{\it Carleson embedding theorems}, even though traditionally this 
notion is reserved for special instances and applications of such estimates. In particular, if $\nu$ is a Borel measure on the upper half plane satisfying the so-called Carleson measure condition $\nu(T(x,s))\le Ms$, then one could deduce from Theorem~\ref{carlesonembedding} a typical version of the classical Carleson embedding theorem, as follows. Below the first and last $L^p$ norm are classical Lebesgue norms while the second and third $L^p$ norms are outer $L^p$ norms over an outer measure generated by $\nu$ and $\sigma$ and the tent collection.
$$\|F_\phi(f)\|_{L^p(X,\nu)} \le \|F_\phi\|_{L^p(X,\nu,S_\infty)} \le $$
$$\le M\|F_\phi\|_{L^p(X,\sigma,S_\infty)} \le$$
$$\le  C_{p,\phi}M\|f\|_p$$

\begin{theorem}\label{carlesonembedding}
Let $1<p\le \infty$. We have for $\phi$ as above 
\begin{equation}\label{mfestimate}
\|F_\phi(f)\|_{L^{p}(X,\sigma,S_\infty)}\le C_{p,\phi}\|f\|_p\ .
\end{equation}
If in addition $\int \phi=0$, then
\begin{equation}\label{sfestimate}
\|F_\phi(f)\|_{L^{p}(X,\sigma,S_2)}\le C_{p,\phi}\|f\|_p\ .
\end{equation}
\end{theorem}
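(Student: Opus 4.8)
The plan is to deduce both inequalities from two endpoint estimates, the case $p=\infty$ and a weak-type $(1,1)$ bound, via the outer Marcinkiewicz interpolation theorem (Proposition~\ref{p.marcinkiewicz}) applied with $p_1=1$, $p_2=\infty$, $Y=\R$ and $\nu$ Lebesgue measure: interpolation supplies the range $1<p<\infty$ and the case $p=\infty$ is itself one of the two inputs, so the full range $1<p\le\infty$ is covered. Since $F_\phi$ is linear, the quasi-subadditivity hypothesis of Proposition~\ref{p.marcinkiewicz} holds with constant $1$, and $F_\phi(f)$ is continuous hence Borel, so the hypotheses are legitimate once the two endpoint bounds are in place.

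For \eqref{mfestimate} the case $p=\infty$ is immediate: $|F_\phi(f)(y,t)|\le\|\phi\|_\infty\, t^{-1}\int_{|y-x|\le t}|f|\le\|\phi\|_1\|f\|_\infty$, and since the outer essential supremum attached to $S_\infty$ is the ordinary supremum, $\|F_\phi(f)\|_{L^\infty(X,\sigma,S_\infty)}=\sup_{(y,t)}|F_\phi(f)(y,t)|\le\|\phi\|_1\|f\|_\infty$. For the weak-type bound I would first observe, again because $S_\infty$ reduces to the ordinary supremum, that $\mu(S_\infty(F_\phi(f))>\lambda)$ equals the outer measure of the open set $\{(y,t):|F_\phi(f)(y,t)|>\lambda\}$. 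If $|F_\phi(f)(y,t)|>\lambda$, the average of $|f|$ over $[y-t,y+t]$ exceeds $\lambda/(2\|\phi\|_\infty)$, so $[y-t,y+t]$ lies in the open set $\Omega:=\{Mf>\lambda/(2\|\phi\|_\infty)\}$, with $M$ the Hardy--Littlewood maximal operator. Writing $\Omega=\bigcup_j I_j$ with the $I_j$ disjoint open intervals and using connectedness of $[y-t,y+t]$, the set $\{(y,t):[y-t,y+t]\subset\Omega\}$ is exactly the disjoint union of the tents $T(c_j,|I_j|/2)$ over the midpoints $c_j$ of the $I_j$; hence its outer measure is $\le\sum_j|I_j|/2=|\Omega|/2\lesssim_\phi\lambda^{-1}\|f\|_1$ by the weak $(1,1)$ bound for $M$. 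This gives $\|F_\phi(f)\|_{L^{1,\infty}(X,\sigma,S_\infty)}\lesssim_\phi\|f\|_1$, and Marcinkiewicz finishes \eqref{mfestimate}. No cancellation of $\phi$ is used here, consistent with the statement.

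For \eqref{sfestimate} the case $p=\infty$ is the classical Carleson-measure/Littlewood--Paley estimate. Fixing a tent $T(x_0,s_0)$, for $(y,t)\in T(x_0,s_0)$ the kernel in \eqref{definefphi} is supported in $[y-t,y+t]\subset(x_0-s_0,x_0+s_0)$, so $F_\phi(f)$ on $T(x_0,s_0)$ only sees $f\one_{(x_0-s_0,x_0+s_0)}$; Plancherel in $y$ together with $\int_0^\infty|\widehat\phi(t\xi)|^2\,\tfrac{dt}t=C_\phi<\infty$ (convergence at $0$ from $\widehat\phi(0)=\int\phi=0$, at $\infty$ from rapid decay of $\widehat\phi$) yields $s_0^{-1}\int_{T(x_0,s_0)}|F_\phi(f)|^2\,dy\,\tfrac{dt}t\le s_0^{-1}C_\phi\|f\one_{(x_0-s_0,x_0+s_0)}\|_2^2\le 2C_\phi\|f\|_\infty^2$, so $\|F_\phi(f)\|_{L^\infty(X,\sigma,S_2)}\le\sqrt{2C_\phi}\|f\|_\infty$. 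For the weak-type bound, fix $\lambda>0$ and Calder\'on--Zygmund decompose $f$ at height $\alpha\lambda$ with $\alpha=\alpha(\phi)$ small to be chosen: $f=g+b$, $b=\sum_j b_j$, $\supp b_j\subset Q_j$ (disjoint dyadic intervals), $\int b_j=0$, $\|b_j\|_1\lesssim\alpha\lambda|Q_j|$, $\|g\|_\infty\lesssim\alpha\lambda$, $\sum_j|Q_j|\lesssim(\alpha\lambda)^{-1}\|f\|_1$. By the $p=\infty$ bound just proved, $S_2(F_\phi(g))(T)\lesssim_\phi\alpha\lambda$ on every tent, so for $\alpha$ small this contribution is harmless. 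Put $F:=\bigcup_j T_j$ with $T_j$ the tent over $3Q_j$, so $\mu(F)\le\sum_j\sigma(T_j)\lesssim\sum_j|Q_j|\lesssim_\phi\lambda^{-1}\|f\|_1$; it remains to show $S_2(F_\phi(b)\one_{X\setminus F})(T(x,s))\lesssim_\phi\alpha\lambda$ for every tent $T(x,s)$. Off $F$ only the $b_j$ with $\ell(Q_j)\le 2t$ contribute (for larger cubes $F_\phi(b_j)(y,t)\ne0$ forces $(y,t)\in T_j$), and for those the mean-zero property of $b_j$ and the smoothness of $\phi$ give $|F_\phi(b_j)(y,t)|\lesssim\|\phi'\|_\infty\|b_j\|_1\,\ell(Q_j)\,t^{-2}\one_{\dist(y,Q_j)\le t}$. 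Expanding $\big|\sum_j F_\phi(b_j)\big|^2$, controlling the off-diagonal terms by the separation of the disjoint $Q_j$ against the $t^{-2}$ decay, and using that the cubes relevant to $T(x,s)$ are disjoint intervals inside $(x-3s,x+3s)$ (total length $\lesssim s$), one obtains $\int_{T(x,s)\setminus F}|F_\phi(b)|^2\,dy\,\tfrac{dt}t\lesssim_\phi(\alpha\lambda)^2 s$. Combining the $g$- and $b$-contributions and choosing $\alpha$ small enough that their sum is $\le\lambda$ gives $\essup_{X\setminus F}S_2(F_\phi(f))\le\lambda$, hence $\mu(S_2(F_\phi(f))>\lambda)\le\mu(F)\lesssim_\phi\lambda^{-1}\|f\|_1$, and Marcinkiewicz finishes \eqref{sfestimate}.

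I expect the main obstacle to be exactly the last estimate for the bad part of \eqref{sfestimate}: since the $S_2$ size carries the measure $dy\,\tfrac{dt}t$, which fails to be integrable near $t=0$ over a tent, a mere pointwise bound on $F_\phi(b)$ away from $F$ is useless, and one must genuinely exploit both the cancellation $\int b_j=0$ and the near-orthogonality across scales and locations of the pieces $F_\phi(b_j)$. This is the one point where, unlike in \eqref{mfestimate}, replacing $F_\phi$ by a maximal function does not suffice, and it is also where the hypothesis $\int\phi=0$ is used — both there and in the $p=\infty$ endpoint.
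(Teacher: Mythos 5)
Your proof follows the same overall architecture as the paper: Marcinkiewicz interpolation (Proposition~\ref{p.marcinkiewicz}) between a trivial $L^\infty$ bound and a weak $(1,1)$ endpoint for each of the two estimates, the $(1,1)$ bound for $S_\infty$ coming from the Hardy--Littlewood maximal function and the $(1,1)$ bound for $S_2$ coming from a Calder\'on--Zygmund decomposition; the $L^\infty$ endpoint for $S_2$ uses the same Calder\'on/Littlewood--Paley identity. Your observations for \eqref{mfestimate} that for $S_\infty$ the super level measure coincides with the outer measure of the actual super level set, and that $\{(y,t):[y-t,y+t]\subset\Omega\}$ is exactly the disjoint union of tents over the components of $\Omega$, are correct and match what the paper does implicitly.

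The one place your route genuinely diverges from the paper is the bad-part estimate in the weak $(1,1)$ bound for \eqref{sfestimate}. You propose to estimate $\int_{T(x,s)\setminus F}|F_\phi(b)|^2\,dy\,\frac{dt}{t}$ directly by expanding $|\sum_j F_\phi(b_j)|^2$ and controlling cross-terms via the spatial separation of the $Q_j$ against the $\ell(Q_j)^2 t^{-2}$ decay. This does work (one can group the pairs $(j,j')$ by $\dist(Q_j,Q_{j'})$ and use disjointness of the intervals), but it is messier than what the paper does. The paper instead establishes, away from $E$, an $S_\infty$ bound of order $\lambda$ for $F_\phi(b)$ --- using the disjointly supported primitives $B_i$ of the $b_i$ and one integration by parts --- and, separately, an $S_1$ bound of order $\lambda$ by summing the easily obtained $L^1(T(x,s)\setminus E,\,dy\,dt/t)$ bounds for each individual $F_\phi(b_i)$ (no cross-terms appear since $S_1$ is a genuine integral). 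Log convexity, i.e.\ $S_2\le S_1^{1/2}S_\infty^{1/2}$, then gives the $S_2$ bound. This is both shorter and better aligned with the paper's point of view --- that estimates for the intermediate size $S_2$ should be factored through $S_1$ and $S_\infty$ rather than attacked head on --- and makes the use of cancellation ($\int b_i=0$) appear exactly once, in the $S_\infty$ bound. You correctly identify the bad-part estimate as the crux; the suggestion is simply that the cleanest way to organize it is via $S_1$ and $S_\infty$ rather than a direct $S_2$ expansion.
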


\begin{proof}
We first prove Estimate (\ref{mfestimate}). The estimate
will follow by Marcinkiewicz interpolation, Proposition
\ref{p.marcinkiewicz}, between weak
endpoint bounds at $p=\infty$ and $p=1$.
Clearly we have for all $(y,t)\in X$:
$$|F_\phi(f)(y,t)|\le \|f\|_\infty \|\phi\|_1\ .$$
Hence 
$$S_\infty(F_\phi)(T(x,s))\le \|f\|_\infty \|\phi\|_1$$
for every tent $T(x,s)$, and this implies the $L^\infty$ bound.
To prove the weak type estimate at $L^1$, fix $f$ and $\lambda>0$.
Consider the set $\Omega \subset \R$ where the Hardy Littlewood maximal
function $Mf$ of $f$ is larger than $c_\phi \lambda$ for some constant
$c_\phi$ that depends on $\phi$ and is specified later. The set 
$\Omega$ is open and thus the disjoint
union of at most
countably many open intervals $(x_i-s_i,x_i+s_i)$ for $i=1,2,\dots$.
Let $E$ be the union of the tents $T(x_i,s_i)$.
Then the geometry of tents implies that for $(x,s)\not\in E$ 
none of the intervals $(x_i-s_i,x_i+s_i)$ may contain the interval
$(x-s,x+s)$ and hence  there is a point $y\in (x-s,x+s)$ such that
$Mf(y)\le c_\phi\lambda$. Then we see from a standard
estimate of $\phi$ by a superposition of characteristic functions
of intervals of length at least $2s$:
$$F_\phi(f)(x,s)\le C_\phi Mf(y)\le \lambda\ ,$$
the latter by appropriate choice of $c_\phi$.
Hence 
$$\essup_{E^c} S_\infty(F_\phi)\le \lambda\ .$$
On the other hand, by the Hardy Littlewood maximal theorem, 
$$\mu(E)\le \sum_i s_i\le  |\{x: Mf(x)\ge c_\phi \lambda\}|\le C_\phi \|f\|_1 \lambda^{-1}\ .$$
This proves the weak type estimate at $L^1$ and completes the proof
of Estimate (\ref{mfestimate}).

We turn to Estimate (\ref{sfestimate}), which is proven similarly by Marcinkiewicz
interpolation between weak endpoint bounds at $\infty$ and $1$.
Note first that if $\phi$ has integral zero, then the map
$F_\phi$ goes under the name of ``continuous wavelet transform'' and
is well known to be a multiple of an isometry in the following sense:
$$\int_0^\infty  \int_\R |F_\phi(g)(y,t)|^2 \, dy \,\frac{dt}t=C_\phi \|g\|_2^2$$
for every $g\in L^2(\R)$.
This fact goes under the name of Calder\'on's reproducing formula 
or Calder\'on's resolution of the identity, see for example \cite{daubechies}. 
It can be proven by a calculation similar to our reduction of
Theorem \ref{bilinearhttheo}
to Lemma \ref{bhttheorem} below.

Consider a tent $T(x,s)$. For $(y,t)$ in the tent, we see from
compact support of $\phi$ that
$$F_\phi(f)(y,t)=F_\phi(f1_{[x-3s,x+3s]})(y,t)\ .$$
Applying Calder\'on's reproducing formula with $g=f1_{[x-3s,x+3s]}$ gives
$$\int  \int_{T(x,s)} |F_\phi(f)(y,t)|^2 \, dy \,\frac{dt}t\le C_\phi \|f1_{[x-3s,x+3s]}\|_2^2\le C_\phi s \|f\|_\infty^2 \ .$$
Dividing by $s$ gives
$$S_2(F_\phi(f))(T(x,s))\le C_\phi \|f\|_\infty\ ,$$
which proves the desired estimate for $p=\infty$.
 
To prove the weak type bound at $p=1$, fix $f\in L^1(\R)$ and $\lambda>0$ and consider again
the set $\Omega=\{x: Mf(x)>c_\phi \lambda\}$, which is the disjoint
union of open intervals $(x_i-s_i,x_i+s_i)$. Consider the
Calder\'on-Zygmund decomposition of $f$ at level $c_\phi \lambda$:
$$f=g+\sum_i b_i$$
which is uniquely determined by the demand that for each $i$ the 
function $b_i$ is supported on $[x_i-s_i,x_i+s_i]$, and has integral zero, while
$g$ is constant on this interval.
As a consequence, $g$ is bounded by $c_\phi \lambda$ and we have by the previous argument
for any tent
$$S_2(F_\phi(g))(T(x,s))\le \lambda/2\ .$$

Let $E$ be the union of tents $T(x_i,3s_i)$. Let $b=\sum_i b_i$. It remains to show that, with small choice of $c_\phi$, for every $(x,s)\in \R\times (0,\infty)$ it holds that
$$S_2(F_\phi(b)1_{E^c})(T(x,s))\le \lambda/2 \ \ .$$
Let
$B_i$ denote the compactly supported primitive of $b_i$.  Then we have for $(y,t)\notin E$,
using compact support of $\phi$, 
$$F_\phi(b)(y,t)= \int b(x) t^{-1}\overline{\phi(t^{-1}(y-x))}\, dy$$
$$=\int \sum_{i: s_i \le t} b_i(x) t^{-1}\overline{\phi(t^{-1}(y-x))}\, dx$$
$$=  \int \sum_{i: s_i \le t} B_i(x) t^{-2}\overline{\phi'(t^{-1}(y-x))}\, dx\ .$$
Hence
$$|F_\phi(b)(y,t)|\le \|\sum_{i: s_i \le t} t^{-1} B_i \|_\infty \|\phi'\|_1\ .$$
We claim that the $L^\infty$ norm on the right-hand-side is bounded by $4c_\phi\lambda$.
Since the $B_i$ are disjointly supported, it suffices to see 
$\|t^{-1}B_i\|_\infty\le 4c_\phi \lambda$ for each $i$ with $s_i\le t$. However, this
follows from $\|b_i\|_1\le 4c_\phi \lambda s_i$, which is a standard estimate
for the Calder\'on Zygmund decomposition. Hence
$$S_\infty(F_\phi(b)1_{E^c})(T(s,x))\le 4c_\phi \lambda \  .$$
To obtain a bound for $S_2$ in place of $S_\infty$, we use
log convexity of $S_p$ and a bound on $S_1$.
Let $T(x,s)$ be a tent and $b_i$ one summand of the bad function. Then we have
from considerations of the support of $b_i$ and $\phi$:
$$\int_{(y,t)\in T(x,s)\setminus E} |\int_\R b_i(z) t^{-1}\overline{\phi(t^{-1}(y-z))}\, dz|\, dy\, \frac{dt}t$$
$$\le \int_{t \ge s_i} \int _{|y-x_i| \le 2t} |\int_\R b_i(z) t^{-1}\overline{\phi(t^{-1}(y-z))}\, dz|\, dy\, \frac{dt}t\ .$$
Using partial integration we estimate this by
$$\int_{t \ge s_i} \int _{|y-x_i| \le 2t} \|B_i\|_1 \|\phi'\|_\infty \, dy\, \frac{dt}{t^3}$$
$$\le C_\phi \int_{t \ge s_i}  \|B_i\|_1 \, \frac{dt}{t^2}
\le C_\phi \|B_i\|_1 s_i^{-1}\le C_\phi \|b_i\|_1 \le  \lambda s_i/6\ .$$

Adding over the disjointly supported $b_i$ inside $(x-3s,x+3s)$, which are
all the summands of the bad function possibly contributing to $F_\phi(b)$ on $T(x,s)$, gives
$$S_1(F_\phi(b)1_{E^c})(T(x,s)) \le \lambda /2\ .$$
By log convexity, we then obtain
$$S_2(F_\phi(b)1_{E^c})(T(x,s)) \le \lambda/2 \ .$$
Together with the previously established bound for the 
good function we obtain by the triangle inequality 
$$ S_2(F_\phi(f)1_{E^c})(T(x,s)) \le \lambda$$
and hence
$$\essup_{E^c} S_2(F_\phi(f))\le \lambda\ .$$
On the other hand, we have by the Hardy Littlewood maximal theorem as before
$$\mu(E)\le C_\phi \|f\|_1 \lambda^{-1}\ .$$
This completes the proof of the weak type $1$ endpoint bound for Estimate \ref{sfestimate} and thus the proof of Theorem \ref{carlesonembedding}.
\end{proof}

We will need to apply Theorem \ref{carlesonembedding} in
a slightly modified setting. 

For two parameters $-1\le \alpha\le 1$ and $0< \beta\le 1$
define
$$F_{\alpha,\beta,\phi}(f)(y,t):=F_\phi(f)(y+\alpha t, \beta t)\ .$$
To estimate the outer $L^p$ norm of $F_{\alpha,\beta,\phi}(f)$,
first note that by a simple change of variables
$$s^{-1}\int_{T(x,s)}|F_{\alpha,\beta,\phi}(f)(y,t)|^2\, dy \frac{dt}t
=s^{-1}\int_{T_{\alpha,\beta}(x,s)}|F_\phi(f)(y,t)|^2\, dy \frac{dt}t$$
where we have defined the modified tent $T_{\alpha,\beta}(x,s)$ to be
the set of all points $(z,u)$ such that $(z-\alpha \beta^{-1} u,\beta^{-1} u)\in T(x,s)$.
This modified tent is a tilted triangle, it has height 
$\beta s$ above the real line and width $2s$ near the real line.
The tip of the tilted tent is the point $(x+\alpha s,\beta s)$, which 
is contained in a rectangle with base $[x-s,x+s]$ and height $s$
above the $x$-axis. 
We construct an outer measure space using the collection of modified
tents by setting
$$\sigma_{\alpha,\beta}(T_{\alpha,\beta}(x,s)):=s\ .$$
We then define for a Borel measurable function $G$ on $X$ 
$$S_{\alpha,\beta,2} (G)(T_{\alpha,\beta}(x,s)):=(s^{-1}\int_{T_{\alpha,\beta}(x,s)} |G(y,t)|^2 \, dy\, \frac{dt}t)^{1/2}\ .$$
We have by transport of structure
$$\|F_{\alpha,\beta,\phi}(f)\|_{L^p(X,\sigma,S_2)}=
\|F_\phi(f)\|_{L^p(X,\sigma_{\alpha,\beta},S_{\alpha,\beta,2})}\ .$$
Given a standard tent $T(x,s)$, we may cover it by a modified tent
$T_{\alpha,\beta}(x',s')$ of width $ 2s'=4\beta^{-1}s$. Hence
$$\mu_{\alpha,\beta}(T(x,s))\le C \beta^{-1} \mu(T(x,s))\ .$$
Moreover, a modified tent $T_{\alpha,\beta}(x,s)$ is contained in a standard
tent $T(x',s')$ of width $2s'=4s$. Hence
$$S_{\alpha,\beta,2}(G)(T_{\alpha,\beta}(x,s))\le C S_2(G)(T(x',s'))\ .$$
Thus Proposition \ref{pullback} applied to the identity map on $X$ gives
$$\|F_\phi(f)\|_{L^p(X,\sigma_{\alpha,\beta}, S_{\alpha,\beta,2})}\le C\beta^{-1/p}
\|F_\phi(f)\|_{L^p(X,\sigma, S_2)}\ .$$
We have thus proven the following corollary.
\begin{corollary}\label{tentcorollary}
Assume the setup as above. 
Let $1<p\le \infty$ and $-1\le \alpha\le 1$ and $0< \beta\le 1$
and assume $\int \phi=0$. Then
$$
\|F_{\alpha,\beta,\phi}(f)\|_{L^{p}(X,\sigma,S_2)}\le C_{p,\phi} \beta^{-1/p}\|f\|_p\ .
$$
\end{corollary}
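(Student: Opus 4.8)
The plan is to deduce the corollary directly from Theorem~\ref{carlesonembedding} by combining the transport-of-structure identity, a comparison of the tilted and untilted tent geometries, and the pull-back proposition. First I would record the change of variables already displayed in the setup: the affine map $\Psi(z,u)=(z-\alpha\beta^{-1}u,\ \beta^{-1}u)$ of the upper half plane carries each modified tent $T_{\alpha,\beta}(x,s)$ bijectively onto the ordinary tent $T(x,s)$, it preserves the measure $dy\,\frac{dt}{t}$, and it satisfies $F_{\alpha,\beta,\phi}(f)=F_\phi(f)\circ\Psi^{-1}$. Since moreover $\sigma_{\alpha,\beta}(T_{\alpha,\beta}(x,s))=s=\sigma(T(x,s))$, this bijection between generating collections respects pre-measures, outer measures and sizes, hence gives the exact identity $\|F_{\alpha,\beta,\phi}(f)\|_{L^p(X,\sigma,S_2)}=\|F_\phi(f)\|_{L^p(X,\sigma_{\alpha,\beta},S_{\alpha,\beta,2})}$ for all $0<p\le\infty$.

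Next I would compare the outer measure space $(X,\sigma_{\alpha,\beta},S_{\alpha,\beta,2})$ with $(X,\sigma,S_2)$ through the identity map $\Phi=\mathrm{id}_X$, checking the hypotheses \eqref{mupullback} and \eqref{spullback} of Proposition~\ref{pullback}. On one hand, a standard tent $T(x,s)$ has width $2s$ and a tip of height $s$, so it can be covered by a single modified tent $T_{\alpha,\beta}(x',s')$ with $s'=2\beta^{-1}s$; using \eqref{ecoverc} for the ordinary tents this gives $\mu_{\alpha,\beta}(T(x,s))\le s'=2\beta^{-1}\mu(T(x,s))$, which is \eqref{mupullback} with $A=2\beta^{-1}$. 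On the other hand, a modified tent $T_{\alpha,\beta}(x,s)$ is a tilted triangle of height $\beta s\le s$ and width $2s$, hence contained in a standard tent $T(x',s')$ with $s'=2s$; monotonicity of the $L^2$ average over the weighted measure then yields $S_{\alpha,\beta,2}(G)(T_{\alpha,\beta}(x,s))\le C\,S_2(G)(T(x',s'))$, which is \eqref{spullback} with $B=C$. Proposition~\ref{pullback} therefore produces $\|F_\phi(f)\|_{L^p(X,\sigma_{\alpha,\beta},S_{\alpha,\beta,2})}\le C\beta^{-1/p}\,\|F_\phi(f)\|_{L^p(X,\sigma,S_2)}$.

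Finally, since $\int\phi=0$, estimate \eqref{sfestimate} of Theorem~\ref{carlesonembedding} bounds the last quantity by $C_{p,\phi}\|f\|_p$; concatenating the three inequalities proves the corollary, and at $p=\infty$ the factor $\beta^{-1/p}$ collapses to $1$, consistent with the $L^\infty$ estimate of Theorem~\ref{carlesonembedding} being insensitive to the tilt. The one place demanding care is the geometric bookkeeping in the middle paragraph: one must verify the covering of an ordinary tent by a modified tent and the reverse containment with enough precision to pin down the exponent of $\beta$ in \eqref{mupullback}, and in particular to see that it is the (unchanged) width $2s$, not the (shrunken) height $\beta s$, that controls the comparison. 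Everything else is a routine application of results already in hand.
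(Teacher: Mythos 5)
Your proposal is correct and follows exactly the paper's own route: transport of structure under the affine reparametrization, the two geometric comparisons between tilted and ordinary tents (a standard tent covered by one tilted tent with $s'\sim\beta^{-1}s$, a tilted tent contained in a standard tent with $s'\sim s$), then Proposition~\ref{pullback} applied to the identity map, and finally estimate \eqref{sfestimate}. The only cosmetic difference is that you make explicit the constants $A=2\beta^{-1}$ and $B=C$ in \eqref{mupullback}--\eqref{spullback}; the paper absorbs them into unnamed constants.
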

We shall need a slightly better dependence on the parameter $\beta$
in the last corollary. This is stated in the following lemma, where
explicit values for $\epsilon$ are not difficult to obtain but
unimportant for our purpose.

\begin{lemma}\label{tiltedembedding}
Assume the setup as above. 
Let $1<p\le \infty$ and $-1\le \alpha\le 1$ and $0< \beta\le 1$
and assume $\int \phi=0$. Then there exists $\epsilon>0$ such that we have 
$$
\|F_{\alpha,\beta,\phi}(f)\|_{L^{p}(X,\sigma,S_2)}\le C_{p,\phi} \beta^{-1/p+\epsilon}\|f\|_p\ .
$$
\end{lemma}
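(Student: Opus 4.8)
The plan is to supplement the endpoint bounds already furnished by Corollary \ref{tentcorollary} with one new estimate, namely a weak-type $(2,2)$ bound for the map $f\mapsto F_{\alpha,\beta,\phi}(f)$ into $L^{2,\infty}(X,\sigma,S_2)$ whose constant does not depend on $\alpha$ or $\beta$, and then to extract the lemma by Marcinkiewicz interpolation (Proposition \ref{p.marcinkiewicz}). We may assume $1<p<\infty$, the case $p=\infty$ being contained in Corollary \ref{tentcorollary} (where no power of $\beta$ appears).

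For the new endpoint, first I would record, via the change of variables $u=\beta t$, $v=y+\alpha t$, which preserves $X$ and transforms $dy\,dt/t$ into $dv\,du/u$, together with Calder\'on's reproducing formula as quoted in the proof of Theorem \ref{carlesonembedding}, the identity
$$\int_X |F_{\alpha,\beta,\phi}(f)(y,t)|^2\, dy\,\frac{dt}{t}=\int_X |F_\phi(f)(v,u)|^2\, dv\,\frac{du}{u}=C_\phi\|f\|_2^2 ,$$
whose constant is independent of $\alpha$ and $\beta$ precisely because $dy\,dt/t$ is scale invariant. Then I would prove the purely soft statement that $\mu(S_2(G)>\lambda)\le 5\lambda^{-2}\int_X|G|^2\, dy\,\frac{dt}{t}$ for every $G\in\B(X)$ and every $\lambda>0$: writing $I$ for the integral (which we may assume finite), calling a tent $T(x,s)$ heavy if $\int_{T(x,s)}|G|^2\, dy\,\frac{dt}{t}>\lambda^2 s$ (such a tent has $s<I/\lambda^2$, so the heavy tents have uniformly bounded radius), and letting $F_\lambda$ be the union of all heavy tents --- an open, hence Borel, set --- one checks at once that $S_2(G1_{X\setminus F_\lambda})(T)=0$ when $T$ is heavy, since then $T\subset F_\lambda$, and $S_2(G1_{X\setminus F_\lambda})(T)\le S_2(G)(T)\le\lambda$ otherwise, so that $\essup_{X\setminus F_\lambda}S_2(G)\le\lambda$ and hence $\mu(S_2(G)>\lambda)\le\mu(F_\lambda)$. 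A Vitali covering lemma applied to the footprints $(x-s,x+s)$ of the heavy tents then selects heavy tents $T(x_j,s_j)$ with pairwise disjoint footprints --- hence pairwise disjoint as subsets of $X$ --- such that every heavy tent lies in some $T(x_j,5s_j)$, whence $\mu(F_\lambda)\le\sum_j\sigma(T(x_j,5s_j))=5\sum_j s_j<5\lambda^{-2}\sum_j\int_{T(x_j,s_j)}|G|^2\, dy\,\frac{dt}{t}\le 5\lambda^{-2}I$. Combining this with the identity above, taken for $G=F_{\alpha,\beta,\phi}(f)$, yields $\|F_{\alpha,\beta,\phi}(f)\|_{L^{2,\infty}(X,\sigma,S_2)}\le (5C_\phi)^{1/2}\|f\|_2$, uniformly in $\alpha$ and $\beta$.

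To conclude, I also note that Corollary \ref{tentcorollary} gives $\|F_{\alpha,\beta,\phi}(f)\|_{L^\infty(X,\sigma,S_2)}\le C_\phi\|f\|_\infty$ with no power of $\beta$, while the weak-type $(1,1)$ endpoint bound established inside the proof of Theorem \ref{carlesonembedding}, combined with Proposition \ref{pullback} and transport of structure exactly as in the derivation of Corollary \ref{tentcorollary} but now carried out at $p=1$, gives $\|F_{\alpha,\beta,\phi}(f)\|_{L^{1,\infty}(X,\sigma,S_2)}\le C\beta^{-1}\|f\|_1$. Interpolating these three bounds by Proposition \ref{p.marcinkiewicz} (legitimate since $f\mapsto F_{\alpha,\beta,\phi}(f)$ is linear): for $1<p<2$, interpolating the weak-$(1,1)$ bound with the new weak-$(2,2)$ bound yields $\|F_{\alpha,\beta,\phi}(f)\|_{L^p(X,\sigma,S_2)}\le C_{p,\phi}\beta^{-(2-p)/p}\|f\|_p$, which is the lemma with $\epsilon=(p-1)/p>0$ since $-(2-p)/p=-1/p+(p-1)/p$; for $2<p<\infty$, interpolating the new weak-$(2,2)$ bound with the $L^\infty$ bound yields $\|F_{\alpha,\beta,\phi}(f)\|_{L^p(X,\sigma,S_2)}\le C_{p,\phi}\|f\|_p$, i.e. the lemma with $\epsilon=1/p$; and the remaining value $p=2$ follows by interpolating the weak-$(1,1)$ bound against any one of the $\beta$-free strong $L^q$ bounds, $2<q<\infty$, just obtained.

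The genuinely new ingredient is the soft inequality $\mu(S_2(G)>\lambda)\le 5\lambda^{-2}\int_X|G|^2\,dy\,dt/t$, and I expect that --- rather than the interpolation bookkeeping --- to be the heart of the matter; its proof rests on the two simple observations that the outer essential supremum of $S_2(G)$ on the complement of the union of \emph{all} heavy tents is automatically $\le\lambda$, so that no iterated stopping time is needed, and that disjointness of footprints forces disjointness of the tents themselves, so that the Vitali selection does not overcount the mass $I$.
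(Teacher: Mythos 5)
Your proposal is correct and follows essentially the same route as the paper: Marcinkiewicz interpolation supplemented by a $\beta$-free weak-type $(2,2)$ endpoint, which in turn is proved by a Vitali covering of the heavy-tent footprints together with Calder\'on's reproducing formula and the scale-invariance of $dy\,dt/t$. Your packaging of the endpoint as the general soft estimate $\mu(S_2(G)>\lambda)\le 5\lambda^{-2}\int_X|G|^2\,dy\,dt/t$, applied after the $\alpha,\beta$-tilt has been absorbed by a measure-preserving change of variables, is a slightly cleaner factorization than the paper's direct argument in the tilted-tent picture, and your choice to interpolate against the weak $(1,1)$ bound from inside the proof of Theorem~\ref{carlesonembedding} rather than against Corollary~\ref{tentcorollary} at intermediate exponents is an equally valid bookkeeping variant.
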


Proof:
This lemma follows by various applications of Marcinkiewicz 
interpolation using the bounds of Corollary \ref{tentcorollary}
and an improved weak type 2 bound:
$$
\|F_{\alpha,\beta,\phi}(f)\|_{L^{2,\infty}(X,\sigma,S_2)}=
\|F_{\phi}(f)\|_{L^{2,\infty}(X,\sigma_{\alpha,\beta},S_{\alpha,\beta,2})}
\le C_{\phi} \|f\|_2\ ,
$$
where the right-hand-side does not depend on $\beta$.
To see this bound, fix $f\in L^2(\R)$ and $\lambda>0$. 
Consider the collection $\I$ of all open intervals $(x-s,x+s)$
on the real line such that
$$
s^{-1}
\int_{T_{\alpha,\beta}(x,s)}|F_\phi(f)(y , t)|^2\, dy \frac{dt}t
> \lambda^2\ .$$
The union $\bigcup_{I\in \I}I$ is an open set which can be written
as the disjoint union of  countably many  open intervals $(x_i-s_i,x_i+s_i)$.
If we set $E=\bigcup_i T_{\alpha,\beta}(x_i,s_i)$, then it is clear
that
$$S_{\alpha,\beta,2}(F_{\phi}(f) 1_{E^c})(T_{\alpha,\beta}(x,s)) \le \lambda$$
for each $(x,s)$. Hence it suffices to show
\begin{equation}\label{tiltedtents}
\sum_is_i\le C\lambda^{-2}\|f\|_2^2\ .
\end{equation}

We first show that if $\I_1\subset \I$ is a collection of disjoint intervals then
$$\sum_{I_1\in \I_1}|I_1|\le C\lambda^{-2}\|f\|_2^2\ ,$$
It is clear that such $\I_1$ has to be countable. Enumerate the intervals in $\I_1$
as $(x_1'-s_1',x_1'+s_1')$, $(x_2'-s_2',x_2'+s_2')$ etc. 
Then we have by choice of the collection $\I$
$$\sum_i s_i'\le \sum_i
\lambda^{-2}
\int_{T_{\alpha,\beta}(x_i',s_i')}|F_\phi(f)(y, t)|^2\, dy 
\frac{dt}t\ .$$
However, the tents $T_{\alpha,\beta}(x_i',s_i')$ are pairwise disjoint,
and hence
$$\sum_i s_i'\le 
\lambda^{-2}
\int_{0}^\infty \int_\R |F_\phi(f)(y, t)|^2\, dy 
\frac{dt}t\ .$$
By Calder\'on's reproducing formula, the latter is bounded
by $$C_\phi \lambda^{-2}\|f\|_2^2\ .$$ 

The above estimate shows in particular that $\sup_{I\in \I} |I|$ is finite, and we may select any $I_1\in \I$ such that $|I_1|>\frac 1 2 \sup_{I\in \I}|I|$. Let $\I'$ be the collection of intervals in $\I$ that does not intersect (or contain) $I_1$. Then select any $I_2 \in \I'$ such that its length is more than half of $\sup_{I\in \I'} |I|$. Iterate this argument we obtain a sequence $I_1,I_2,\dots$ of disjoint intervals in $\I$. We claim that 
$$\bigcup_{I\in \I} I \subset \bigcup_{j} 5I_j$$
here for any $m>0$ we define $mI_j$ to be the interval of length $m|I_j|$ with the same center as $I_j$. Certainly this claim will imply \eqref{tiltedtents}.

Suppose, towards a contradiction, that there exists $I\in \I$ such that $I\not\subset \bigcup_{j} 5I_j$. We first claim that $I$ intersects one of the intervals $I_1,I_2,\dots$. Indeed, since $|I_j|\to 0$ as $j\to\infty$,  there exists $j\ge 1$ such that $|I| > 2|I_{j+1}|$ which means $I$ wasn't available for selection after step $j$, i.e. $I$ has to intersect one of the intervals $I_1,\dots, I_j$. 
Now, let $k \ge 1$ be the smallest index such that $I\cap I_k \ne \emptyset$. It follows that $I$ is available for selection after step $k-1$, and hence $|I_k|\ge \frac 1 2 |I|$ and therefore
$$I\subset 5I_k$$
which contradicts the above assumption.

This proves the lemma.
\endproof


\subsection{Paraproducts and the $T(1)$ theorem}

A classical paraproduct is a bilinear operator, which after 
pairing with a third function becomes a trilinear form that is 
essentially of the type

$$\Lambda(f_1,f_2,f_3)= \int_{\R\times (0,\infty)} 
\prod_{j=1}^3 F_{\phi_j}(f_j)(x,t)\, dx\frac{dt}t$$
with three compactly supported smooth 
functions $\phi_1,\phi_2,\phi_3$ of which two
have vanishing integral while the third does not 
necessarily have vanishing integral.
By symmetry we assume $\phi_1$ and $\phi_2$ to have vanishing integral.
Paraproducts also appear in different forms in the literature,
for example discretized versions of the above integral, or 
versions involving only two embedding maps $F_i$. In the latter case the 
third embedding can typically be inserted after using some manipulations
on the integral expression.

Assuming $f_j$ are bounded, and thus $F_{\phi_j}(f_j)$ are bounded as well,
we obtain by an application of Proposition \ref{measuredomination}
the estimate
$$|\Lambda(f_1,f_2,f_3)| \le C\|\prod_{j=1}^3 F_{\phi_j}(f_j)\|_{L^1(X,\sigma,S_1)}\ .$$
By H\"older's inequality, once the classical one for the sizes and
once Proposition \ref{p.hoelder-energy}, we obtain
$$|\Lambda(f_1,f_2,f_3)| \le C
\|F_{\phi_1}(f_1)\|_{L^{p_1}(X,\sigma,S_2)}\|F_{\phi_2}(f_2)\|_{L^{p_2}(X,\sigma,S_2)}
\|F_{\phi_3}(f_3)\|_{L^{p_3}(X,\sigma,S_\infty)}
$$
for exponents $1< p_1,p_2,p_3\le \infty$. By applying the Carleson
embedding theorems we obtain
$$|\Lambda(f_1,f_2,f_3)| \le C
\|f_1\|_{p_1}\|f_2\|_{p_2}
\|f_3\|_{p_3}\ ,$$
which reproduces classical paraproduct estimates. 
Note that the last estimate does not depend on the $L^\infty$ bounds 
on $f_j$, and thus easily extends to unbounded functions. 
With well known and not too laborous changes in the above arguments 
one can also reproduce classical ${\rm BMO}$ bounds in place of 
$p_1=\infty$ or $p_2=\infty$.

We now state a simplified version of the classical $T(1)$ theorem 
originating in \cite{david-journe}.
\begin{theorem}[$T(1)$ theorem]\label{t1theorem}
Let $\phi$ be some nonzero smooth function supported in
$[-1,1]$ with $\int \phi=0$ and define for $x\in \R$ and $s\in (0,\infty)$
$$\phi_{x,s}(y)=s^{-1}\phi(s^{-1}(y-x))\ .$$
Assume $T$ is a bounded linear operator in $L^2(\R)$ such that for all
$x,y,s,t$
\begin{equation}\label{t1assumption}
|\<T(\phi_{x,s}),\phi_{y,t}\>|\le \frac{\min(t,s)}{\max(t,s,|y-x|)^2}\ .
\end{equation}
Then we have for the operator norm of $T$ the bound
$$\|T\|_{L^2\to L^2}\le C $$ 
for some constant $C$ depending only on $\phi$ and in particular not on $T$.
Moreover, for $1<p<\infty$,
$$\|T f\|_{p}\le C_p \|f\|_p$$ 
for some constant $C_p$ depending only on $\phi$ and $p$.
\end{theorem}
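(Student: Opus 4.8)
The plan is to express $\langle Tf,g\rangle$ as a bilinear form over the half plane $X\times X$ whose kernel is controlled by the quantity appearing in \eqref{t1assumption}, and then to run an (outer) Schur type argument that collapses this double integral to a single one to which Proposition \ref{measuredomination}, the outer H\"older inequality Proposition \ref{p.hoelder-energy}, and the Carleson embedding Theorem \ref{carlesonembedding} apply. First, using Calder\'on's reproducing formula — the same identity quoted in the proof of Theorem \ref{carlesonembedding}, available because $\int\phi=0$ — one writes, for $f,g\in L^2(\R)$,
$$\langle Tf,g\rangle=c_\phi\int_X\int_X \langle T\phi_{x,s},\phi_{y,t}\rangle\,F_\phi(f)(x,s)\,\overline{F_\phi(g)(y,t)}\,\frac{dx\,ds}{s}\,\frac{dy\,dt}{t}$$
(if the synthesis side of the reproducing formula requires a companion wavelet, the kernel $\langle T\phi_{x,s},\phi_{y,t}\rangle$ is merely replaced by one still controlled, uniformly, by the right-hand side of \eqref{t1assumption}). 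The content of \eqref{t1assumption} is that this kernel is almost diagonal, and it is symmetric in $(x,s)$ and $(y,t)$, so it suffices to bound the contribution of the region $s\ge t$, the region $s\le t$ being identical with the roles of $f,g$ and of $p,p'$ interchanged.

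On $s\ge t$ one has $|\langle T\phi_{x,s},\phi_{y,t}\rangle|\lesssim \frac{t}{s^2}(1+|x-y|/s)^{-2}$. Decomposing dyadically in the scale ratio $s/t$ and in the spatial separation $|x-y|/s$, integrating out $(y,t)$ in each piece, and summing the resulting geometric series, one bounds this contribution by $C\int_X F_\phi(f)(x,s)\,(\mathcal A F_\phi(g))(x,s)\,\frac{dx\,ds}{s}$, where $\mathcal A$ is an averaging/maximal operator that replaces $F_\phi(g)$ at scales $t\le s$ near $x$ by local Hardy--Littlewood averages. The weighted Lebesgue measure $d\nu=dx\,\frac{ds}{s}$ meets the hypotheses of Proposition \ref{measuredomination} for the size $S_1$: indeed $\int_{T(x,s)}|F|\,d\nu=\sigma(T(x,s))\,S_1(F)(T(x,s))$, and every sufficiently small ball about a point of $X$ lies in a tent. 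Hence this integral is at most $C\,\|F_\phi(f)\cdot\mathcal A F_\phi(g)\|_{L^1(X,\sigma,S_1)}$. The Cauchy--Schwarz size factorization $S_1(F_1F_2)(E)\le S_2(F_1)(E)\,S_2(F_2)(E)$ together with the outer H\"older inequality Proposition \ref{p.hoelder-energy} at the exponents $1=1/p+1/p'$ then yields
$$|\langle Tf,g\rangle|\le C\,\|F_\phi(f)\|_{L^p(X,\sigma,S_2)}\,\|\mathcal A F_\phi(g)\|_{L^{p'}(X,\sigma,S_2)}\,+\,(\text{symmetric term}).$$

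Finally, Theorem \ref{carlesonembedding}, estimate \eqref{sfestimate}, bounds the first factor by $C_p\|f\|_p$ for $1<p\le\infty$; for the second factor one proves a maximal variant of the Carleson embedding, namely $\|\mathcal A F_\phi(g)\|_{L^{p'}(X,\sigma,S_2)}\le C\|g\|_{p'}$ for $1<p'<\infty$, by combining the Hardy--Littlewood maximal theorem with \eqref{mfestimate} and \eqref{sfestimate}. Putting these together gives $|\langle Tf,g\rangle|\le C_p\|f\|_p\|g\|_{p'}$ for every $1<p<\infty$; classical $L^p$ duality then gives $\|Tf\|_p\le C_p\|f\|_p$, and the case $p=2$ the operator norm bound $\|T\|_{L^2\to L^2}\le C$. (For $p=2$ alone one could also argue more directly: a Schur test on the displayed bilinear form, using $\int_X|\langle T\phi_{x,s},\phi_{y,t}\rangle|\,\frac{dy\,dt}{t}\lesssim 1$ uniformly in $(x,s)$ and symmetrically, together with Calder\'on's identity $\int_X|F_\phi(f)|^2\,\frac{dy\,dt}{t}=C_\phi\|f\|_2^2$, immediately yields $\|T\|_{L^2\to L^2}\le C$.)

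The main difficulty is the bookkeeping in the middle step: one must organize the dyadic sums so that the geometric decay $2^{-k}$ in the scale ratio and $2^{-2m}$ in the spatial separation supplied by \eqref{t1assumption} strictly dominates the (at most polynomial) cost of the averaging operator $\mathcal A$, and one must establish the maximal Carleson embedding for $\mathcal A$ in outer $L^{p'}(X,\sigma,S_2)$. The strong pointwise hypothesis \eqref{t1assumption} is precisely what is designed to make this "almost orthogonality" accounting routine; granting it, everything else is a direct application of the outer $L^p$ calculus of Sections \ref{outermeasure} and \ref{lptheory}.
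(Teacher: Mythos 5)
Your high-level strategy matches the paper's first step (Calder\'on reproducing formula to turn $\langle Tf,g\rangle$ into a double integral over $X\times X$ with kernel controlled by \eqref{t1assumption}), and your direct Schur-test argument for $p=2$ is correct and more elementary than what the paper does. But beyond that, your route diverges from the paper's, and the divergence is exactly where a genuine gap opens.

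The paper does not integrate out one of the two $X$-variables to form a maximal operator $\mathcal A$. Instead it splits the domain according to which of $s$, $t$, $|y-x|$ is the largest, and in each region makes a change of variables ($y-x=\alpha s$, $t=\beta s$, etc.) that re-expresses the double integral as a superposition, over $(\alpha,\beta)$, of \emph{single} integrals $\int_X |F|\cdot|G_{\alpha,\beta}|$ of $F_\phi(f)$ against a tilted copy of $F_\phi(g)$. To each such single integral it applies Proposition \ref{measuredomination} and outer H\"older with $S_2$ sizes, and then it needs to integrate the resulting bound in $(\alpha,\beta)$. The crux is that Corollary \ref{tentcorollary} only gives $\|G_{\alpha,\beta}\|_{L^{p'}(X,\sigma,S_2)}\lesssim \beta^{-1/p'}\|g\|_{p'}$, and for the region where $|y-x|=\max(s,t,|y-x|)$ the relevant superposition is $\int_0^1\int_0^\alpha \alpha^{-1/p}\beta^{-1/p'}\,d\beta\,\frac{d\alpha}{\alpha}$, which is \emph{logarithmically divergent} if one uses only that estimate. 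The paper overcomes this with Lemma \ref{tiltedembedding}, the $\beta^{-1/p'+\epsilon}$ improvement, whose proof is the non-trivial Vitali-covering argument in Section \ref{t1section}. This improvement is not cosmetic: without it, the off-diagonal contribution cannot be summed.

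Your proposal hides precisely this difficulty inside the unproven claim $\|\mathcal A F_\phi(g)\|_{L^{p'}(X,\sigma,S_2)}\le C\|g\|_{p'}$, which you assert follows ``by combining the Hardy--Littlewood maximal theorem with \eqref{mfestimate} and \eqref{sfestimate}.'' It does not, at least not obviously. The operator $\mathcal A$ as you describe it averages $F_\phi(g)$ not only horizontally (in $y$) but also vertically (over all scales $t\le s$), and the vertical averaging is exactly what the $S_2$ size structure is sensitive to. If one attempts to prove your maximal embedding by writing $\mathcal A$ as a superposition of shift/dilation operators $G\mapsto G_{\alpha,\beta}$ and using Minkowski in $(\alpha,\beta)$, one lands back on the same divergent $\beta$-integral and the same need for the $\epsilon$-gain. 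You acknowledge that the ``bookkeeping in the middle step'' is the main difficulty but declare it routine granting the pointwise hypothesis; it is not routine, and this is the one place the proof actually requires a new idea. To repair your argument you would either need to prove the maximal Carleson embedding (likely via an argument equivalent to Lemma \ref{tiltedembedding}), or switch to the paper's strategy of keeping the $(\alpha,\beta)$ parametrization explicit and proving the $\epsilon$-improved bound directly. As stated, the $L^p$ estimate for $p\ne 2$ is not established.
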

To compare this with more classical formulations of the $T(1)$ theorem,
the assumption (\ref{t1assumption}) is typically deduced from
Calder\'on-Zygmund kernel estimates if $|x-y|>t+s$ and thus the two
test functions $\phi_{x,s}$ and $\phi_{y,t}$ are disjointly supported.
It is deduced from one of the assumptions $T(1)=0$ and $T^*(1)=0$ 
and a weak boundedness assumption if $s$ or $t$ is within a factor 
of $2$ of the maximum of $s$, $t$, and $|y-x|$ and thus the two test 
functions are close. The assumptions $T(1)=0$ and $T^*(1)=0$ can
be obtained from more general assumptions $T(1)\in {\rm BMO}$ and $T^*(1)
\in{\rm BMO}$ by subtracting paraproducts from $T$ first.
A detailed exposition of the $T(1)$ theorem can be found in \cite{stein}.

\begin{proof}

We note from Calder\'on's reproducing formula
$$f=C \int_0^\infty \int_\R  F(x,s) \phi_{x,s} \, dx\, \frac{ds}{s}$$
with a weakly absolutely convergent integral in $L^2$
and $F=F_\phi(f)$ as defined in (\ref{definefphi}). 
Thus we may write with the analoguous notation $G=F_\phi(g)$
$$\<T(f),g\>= $$
$$=\int_0^\infty \int_0^\infty \int_\R \int_\R 
F(x,s) {\<T(\phi_{x,s}),\phi_{y,t}\>} \overline{G(y,t)}
\, dx\,dy \, \frac{ds}
{s}\, \frac{dt}{t}\ .$$

Here we implicitly used boundedness of $T$ and the Schwarz kernel theorem to move $T$ inside the integral representation of $f$.
Note that we have again expressed the form $\<T(f),g\>$ in terms of the functions $F$ and $G$ on the outer space $X$,
which leads towards the use of embedding theorems. However, we cannot apply H\"older's inequality directly,
but we first have to suitably express the double integral over the space $X$ as superposition of single integrals over $X$.

Set $$r:=\max(s,t,|y-x|)\ .$$
We split the domain of integration into the two regions
$r> |x-y|$  and $r=|x-y|$ and estimate the two integrals 
separately. 
Splitting the first region further into two symmetric 
regions (overlapping in a set of measure zero), we may 
restrict attention to the region $s=r$.
We estimate the integral over this region by
$$|\int_0^\infty \int_\R 
\int_{0}^s \int_{x-s}^{x+s} 
F(x,s) 
{\<T(\phi_{x,s}),\phi_{y,t}\>}
\overline{G(y,t)} \, dy\,\frac{dt}t
               \, dx\, \frac{ds}s|$$
$$\le C
\int_0^\infty \int_\R 
\int_{0}^s \int_{x-s}^{x+s} 
|F(x,s) 
\overline{G(y,t)}| \, dy\,{dt}
               \, dx\, \frac{ds}{s^3}$$
$$= C
\int_0^1\int_{-1}^1
\int_0^\infty \int_\R 
|F(x,s) 
\overline{G(x+\alpha s,\beta s)} | \,
               \, dx\, \frac{ds}{s}\,
\, d\alpha \, {d\beta }\ .
$$
In the last line we have changed variables setting 
$y-x=\alpha s$ and $t=\beta s$.
Setting ${G}_{\alpha,\beta}(x,s)=G(x+\alpha s,\beta s)$
we estimate the last display, using Propositions \ref{measuredomination}
and outer H\"older's inequality with dual exponents $1<p,p'<\infty$,
Proposition \ref{p.hoelder-energy}, 
$$\le C
\int_0^1\int_{-1}^1
\|F G_{\alpha,\beta}\|_{L^1(X,\sigma,S_1)}
\, d\alpha \, {d\beta }$$
$$
\le C
\int_0^1\int_{-1}^1
\|F\|_{L^p(X,\sigma,S_2)}
\|G_{\alpha,\beta}\|_{L^{p'}(X,\sigma,S_2)}
\, d\alpha \, {d\beta }\ .
$$
The norm of $F$ can be estimated by
Theorem \ref{carlesonembedding}, while the norm of $G$
can be estimated by Lemma \ref{tiltedembedding}.
Hence we can estimate the last display by
$$\le C
\int_0^1\int_{-1}^1
\beta^{-1/p'}\|f\|_{p}\|g\|_{p'}
\, d\alpha \, {d\beta }
\le 
 C\|f\|_{p}\|g\|_{p'}\ .
$$

The region $r=|y-x|$ we also split into symmetric regions, \sout{first}
restricting to $r=y-x$  and $r=x-y$.  By symmetry, it suffices to estimate the region $r=y-x$. We now split further into $t\le s$ and $s\le t$.

We obtain for the subregion $t\le s$ the estimate
$$|\int_{0}^\infty\int_\R \int_{0}^r \int_0^s  F(x,s) 
 \< T(\phi_{x,s}),\phi_{x+r,t}\>
\overline{G(x+r,t)} \frac{dt}t \frac {ds}s \, dx\, {dr}| $$
$$\le 
\int_0^\infty \int_\R \int_{0}^r \int_0^s  |F(x,s) 
\overline{G(x+r,t)}| {dt} \frac{ds}s \, dx\, \frac{dr}{r^2} $$
$$=
\int_{0}^1 \int_0^\alpha  
\int_0^\infty \int_\R 
|F(x,\alpha r) 
\overline{G(x+r,\beta r)} |
\, dx\, \frac{dr}r 
{d\beta } \frac{d\alpha }\alpha  $$
$$=
\int_{0}^1 \int_0^\alpha  
\int_0^\infty \int_\R 
|F_{0,\alpha}(x,r) 
\overline{G_{1,\beta}(x,r)} |
\, dx\, \frac{dr}r 
{d\beta } \frac{d\alpha }\alpha \ , $$
where we have used the notation $F_{0,\alpha}$ and $G_{1,\beta}$ as above.
We use Lemma \ref{tiltedembedding} twice to estimate the last display
by
$$\le C
\int_{0}^1 \int_0^\alpha  
\alpha^{\epsilon-1/p}\beta^{\epsilon-1/p'} \|f\|_p\|g\|_{p'}
{d\beta } \frac{d\alpha }\alpha  \le  C\|f\|_p\|g\|_{p'}   \ .
$$
The subregion $s\le t$ could be estimated similarly. This concludes the proof of the  $L^2$ and $L^p$
estimate of  Theorem \ref{t1theorem}.
\end{proof}

We conclude this section by pointing at an alternative approach to 
Calder\'on Zygmund operators used in A. Lerner's work \cite{lerner}, who
essentially controls a Calder\'on Zygmund operators by a superposition of ``sparse'' operators.
These sparse operators lend themselves to an application of an outer H\"older inequality 
with spaces $ L^{\infty}(X,\sigma,S_1)\times L^{p}(X,\sigma,S_\infty)\times L^{p'}(X,\sigma,S_\infty)$
in lieu of the above $ L^{p}(X,\sigma,S_2)\times L^{p'}(X,\sigma,S_2)$
or implicit $ L^{\infty}(X,\sigma,S_\infty)\times L^{p}(X,\sigma,S_2)\times L^{p'}(X,\sigma,S_2)$.

\section{Generalized Tents and Carleson Embedding}
\label{gentents}

In this section we introduce a new outer measure space whose
underlying set is the upper three space. The extra dimension
relative to the classical tent spaces is a frequency parameter, which 
arises due to modulation symmetries in problems of time-frequency analysis.
In contrast, the upper half plane merely represents dilation and translation 
symmetries. The generalized Carleson embedding theorem below is new, though 
its proof is an adaption of standard recipes in time-frequency analysis. 
The novelty lies in the concise formulation of an essential part of 
time-frequency analysis, and in the absence of any discretization
in the formulation of Theorem 
\ref{gen.carl.emb}.

This section is the most technical one of the present paper, as it is devoted 
to a proof of Theorem \ref{gen.carl.emb} and its discrete variant, Theorem  
\ref{gen.carl.emb.disc}. We point out that the application 
of Theorem \ref{gen.carl.emb} to the bilinear Hilbert transform
discussed in the final section can be understood without detailed reading of 
the proof in the present section.

Let $X$ be the space $\R\times \R\times (0,\infty)$ with
the usual metric as subspace of $\R^3$. 
Let $0<|\alpha|\le 1$ and $|\beta|\le 0.9$ be two real parameters
and define for a point $(x,\xi,s)$ in $X$ the generalized tent
\begin{equation}\label{gentent}
T_{\alpha,\beta}(x,\xi,s):=\{(y,\eta,t)\in X: t\le s, |y-x|\le s-t, |\alpha(\eta-\xi)+\beta t^{-1}|\le t^{-1}\}\ .
\end{equation}
For a first understanding the reader may focus on the
example $\alpha=1$ and $\beta=0$.  In this case the condition on the frequency variable $\eta$ becomes $-t^{-1} \le \eta-\xi \le t^{-1}$ which is symmetric around $\xi$, as can be seen below. The general case with other $(\alpha,\beta)$ leads to a condition $At^{-1}\le \eta - \xi \le Bt^{-1}$ for some $A<0<B$ depending on $\alpha,\beta$, and will correspond to an asymmetric variant of the Figure below.

\setlength{\unitlength}{0.4mm}
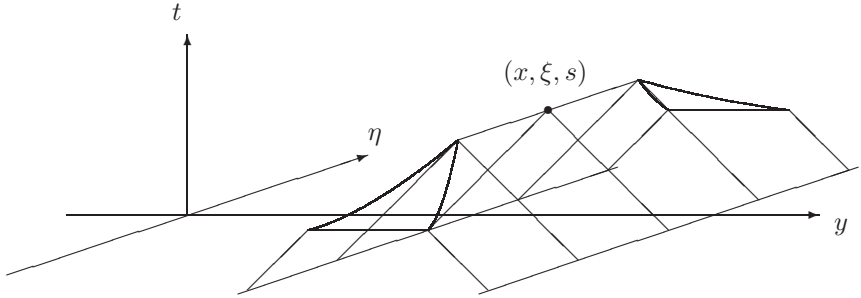
\begin{figure}[ht]
\caption{The generalized tent $T_{1,0}(x,\xi,s)$}
\begin{picture}(300,120)
\put(20,40){\vector(1,0){250}}
\put(0,20){\vector(3,1){120}}
\put(60,40){\vector(0,1){60}}
\put(275,35){$y$}
\put(55,105){$t$}
\put(120,65){$\eta$}
\put(77,14){\line(3,1){126}}
\put(157,14){\line(3,1){126}}
\put(150,65){\line(3,1){60}}
\put(140,35){\line(1,1){40}}
\put(180,75){\line(1,-1){40}}
\put(170,45){\line(1,1){40}}
\put(210,85){\line(1,-1){40}}
\put(110,25){\line(1,1){40}}
\put(150,65){\line(1,-1){40}}
\put(80,15){\line(1,1){20}}
\put(160,15){\line(-1,1){20}}
\put(100,35){\line(1,0){40}}
\put(200,55){\line(1,1){20}}
\put(280,55){\line(-1,1){20}}
\put(220,75){\line(1,0){40}}
\put(180,75){\circle*{3}}
\put(165,85){$(x,\xi, s)$}
\qbezier(100,35)(120,40)(150,65)
\qbezier(210,85)(215,78)(220,75)
\qbezier(140,35)(145,40)(150,65)
\qbezier(210,85)(235,78)(260,75)
\end{picture}
\end{figure}

The projection of the generalized tent onto the first two variables is a 
classical tent as in Example 3. We are only concerned with generalized
tents in this section and will omit the adjective ``generalized'' when
referring to $T_{\alpha,\beta}(x,\xi,s)$.
The collection $\E$ of all tents generates an outer measure if we set
$$\sigma(T_{\alpha,\beta}(x,\xi,s))=s\ .$$ By a similar argument as in Example 3, 
$\sigma$ satisfies \eqref{ecoverc}, and hence the outer 
measure $\mu$ is an extension of the function $\sigma$ on $\E$.

To define a size on Borel functions on $X$, we use further auxiliary 
tents
\begin{equation}\label{definegent}
T^b(x,\xi,s):=\{(y,\eta,t)\in X: t\le s, |y-x|\le s-t, |\eta-\xi|\le b t^{-1}\}\ .
\end{equation}
For $0<b< 1$ and a Borel measurable function $F$  on $X$ we define 
\begin{equation}\label{definegens}
S^{b}(F)(T_{\alpha,\beta}(x,\xi,s)):=
\end{equation}
$$
(s^{-1}\int_{T_{\alpha,\beta}(x,\xi,s)\setminus T^b(x,\xi,s)}|F(y,\eta,t)|^2 \, dy\, 
d\eta  \,{dt})^{1/2}
+ \sup_{(y,\eta,t)\in T_{\alpha,\beta}(x,\xi,s)}|F(y,\eta,t)|\ .$$
One easily checks that this size satisfies the properties required
in Definition \ref{d.size}. 
The size $S^b$ increases as $b$ decreases.

The following is a version of a Carleson embedding theorem
in the setting of generalized tents.
We normalize the Fourier transform
of a Schwartz function $\phi$ on the real line as
$$\widehat{\phi}(\xi)=\int_\R e^{-i\xi x}\phi(x)\, dx\ .$$

\begin{theorem}[Generalized Carleson embedding]\label{gen.carl.emb}

Let $0<|\alpha|\le 1$ and $|\beta|\le 0.9$.
Let $0<b\le 2^{-8}$. Let $\phi$ be a Schwartz function 
with Fourier transform $\widehat{\phi}$ supported in $(-2^{-8}b,2^{-8}b)$, 
and let $2\le p\le \infty$.  
Define for $f\in L^p(\R)$ the function $F$ on $X$ by
$$F(y,\eta,t):=\int_\R f(x) e^{i\eta (y-x)} t^{-1}\phi(t^{-1}(y-x)) \, dx\ .$$
There is some constant $C$ depending only on $\alpha$, $\beta$, $b$, $\phi$, and $p$,
such that if $p > 2$, 
$$\|F\|_{\L^{p}(X,\sigma,S^b)}\le C \|f\|_p\ ,$$
and if $p=2$, 
$$\|F\|_{\L^{2,\infty}(X,\sigma,S^b)}\le C \|f\|_2\ .$$
\end{theorem}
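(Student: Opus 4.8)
The plan is to prove the generalized Carleson embedding theorem by Marcinkiewicz interpolation, Proposition~\ref{p.marcinkiewicz}, reducing everything to two endpoint bounds: an $L^\infty$ bound and a weak-type $L^2$ bound. The operator $f\mapsto F$ is linear, so the hypotheses of Marcinkiewicz are easy to verify, and the only real content lies in the two endpoint estimates for the size $S^b$.

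For the $L^\infty$ bound, I would show that $\|F\|_{L^\infty(X,\sigma,S^b)}\le C\|f\|_\infty$. This requires two things, corresponding to the two terms defining $S^b$ in \eqref{definegens}. The supremum term $\sup_{(y,\eta,t)\in T_{\alpha,\beta}(x,\xi,s)}|F(y,\eta,t)|$ is bounded pointwise by $\|f\|_\infty\|\phi\|_1$ since $F$ is a convolution with an $L^1$-normalized bump (the modulation $e^{i\eta(y-x)}$ has modulus one). For the quadratic term, the key point is that the Fourier support of $\phi$ is tiny compared to $b$: when $\widehat\phi$ is supported in $(-2^{-8}b,2^{-8}b)$, the wave packet $e^{i\eta(y-x)}t^{-1}\phi(t^{-1}(y-x))$ has frequency content essentially at $\eta$ in a window of width $\sim 2^{-8}b t^{-1}$; so for $(y,\eta,t)$ lying in $T_{\alpha,\beta}(x,\xi,s)$ but outside the inner tent $T^b(x,\xi,s)$, we have $|\eta-\xi|> bt^{-1}$, which by almost orthogonality (a $TT^*$ or Plancherel argument, localizing $f$ to $[x-3s,x+3s]$ by the compact-in-space tails of $\phi$) forces the $L^2$ integral over that region to be controlled by $s\|f\|_\infty^2$. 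Dividing by $s$ gives the bound. This is the analogue of the Calder\'on reproducing formula argument in the proof of Theorem~\ref{carlesonembedding}, now quantified in the frequency variable.

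For the weak-type $L^2$ bound, $\|F\|_{L^{2,\infty}(X,\sigma,S^b)}\le C\|f\|_2$, I would fix $\lambda>0$ and construct an exceptional set $F$ of controlled outer measure outside of which $\essup\, S^b(F)\le\lambda$. Following the time-frequency recipe, I would select a maximal (with respect to some natural order) collection of tents $T_{\alpha,\beta}(x_i,\xi_i,s_i)$ on which the size of $F$ exceeds $\lambda$, let $E$ be their union, and bound $\mu(E)\le\sum_i s_i$. The core estimate is then $\sum_i s_i\le C\lambda^{-2}\|f\|_2^2$. Here the two pieces of the size require separate treatment: the $L^2$-average piece is handled by a pigeonholing/selection argument where, after passing to a disjointified subcollection, the tents $T_{\alpha,\beta}(x_i,\xi_i,s_i)\setminus T^b(x_i,\xi_i,s_i)$ are \emph{pairwise disjoint in the full three-dimensional space} (this uses the frequency separation: distinct selected tents with comparable scales are frequency-separated by more than $bt^{-1}$), so their contributions add up to at most $\int_X|F|^2 \le C\|f\|_2^2$ by a global square-function bound (Plancherel in the Fourier variable); a covering argument as in the proof of Lemma~\ref{tiltedembedding} then upgrades the disjoint subcollection to the full collection at the cost of a constant. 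The $L^\infty$ piece of the size is handled by a more elementary argument bounding the relevant set by a maximal function level set.

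The main obstacle I expect is the disjointness/almost-orthogonality step in the weak-$L^2$ bound: making precise the claim that selected tents of comparable scale on which $|F|$ has large $L^2$ mass outside their inner tents are frequency-separated enough to be genuinely disjoint in $X$, and then assembling the selected family via a Vitali-type covering argument. This is exactly where the careful choice of the Fourier-support constant $2^{-8}b$ and the numerical constraint $|\beta|\le 0.9$ are used, to guarantee that the "bad" part of a tent (where the frequency coordinate is far from the center) cannot overlap the bad part of a differently-centered selected tent. Everything else — the interpolation, the $L^\infty$ bound, the reduction of $\mu(E)$ to $\sum s_i$ — is routine given the machinery of Sections~\ref{outermeasure} and~\ref{lptheory} and the techniques already displayed in the proof of Theorem~\ref{carlesonembedding}.
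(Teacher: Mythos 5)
Your outline of the $p=\infty$ endpoint matches the paper (trivial bound for the supremum piece; a $TT^*$/Calder\'on-reproducing argument with spatial localization for the quadratic piece), and interpolation is indeed the right framework. The gap is in the weak-$L^2$ endpoint, and it is not a minor one.

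You propose to transplant the mechanism of Lemma \ref{tiltedembedding}: select tents, pass to a disjoint(ified) subcollection, observe that the reduced regions $T_{\alpha,\beta}(x_i,\xi_i,s_i)\setminus T^b(x_i,\xi_i,s_i)$ are pairwise disjoint in $X$, and then bound $\sum_i s_i \lambda^2 \lesssim \int_X |F|^2\,dy\,d\eta\,dt \lesssim \|f\|_2^2$ by ``a global square-function bound (Plancherel in the Fourier variable),'' closing with a Vitali covering. This cannot work: the global integral $\int_X |F(y,\eta,t)|^2\,dy\,d\eta\,dt$ \emph{diverges}. For each fixed $t$, the map $(y,\eta)\mapsto F(y,\eta,t)$ is a short-time Fourier transform of $f$ with window $t^{-1}\phi(t^{-1}\cdot)$, so Moyal's identity gives $\int\int |F(y,\eta,t)|^2\,dy\,d\eta \sim t^{-1}\|f\|_2^2\|\phi\|_2^2$, and the $dt$ integral over $(0,\infty)$ is infinite. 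The presence of the extra frequency variable is precisely why there is no analogue of the 2D tent-space square function, and it is why the paper cannot (and does not) argue as in Lemma \ref{tiltedembedding}. Disjointness of the reduced regions $T_i^*$ is available for free by construction — each new tent is chosen outside the union of the previous ones, no Vitali step needed — but disjointness alone is worthless without an upper bound on $\sum_i \int_{T_i^*}|F|^2$, and that upper bound is exactly what is missing from your argument.

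What the paper actually proves is a \emph{Bessel inequality} for the family of wave packets $\{\phi_{y,\eta,t}:(y,\eta,t)\in T_k^*\}$ attached to the selected reduced regions (display \eqref{count.tiles.plus} and its proof starting at \eqref{lacunarybessel}). This is established by a $TT^*$ expansion split into a diagonal part (comparable scales, handled by a Schur-type bound exploiting Fourier support) and an off-diagonal part (well-separated scales, handled by a more delicate argument that exploits the structure of the selection and the geometric separation of the tents in \eqref{tentseparation}); and the selection itself is a double recursion, first in the frequency parameter $\xi$ and then in the scale $s$, carried out separately for the upper half-space $X_\xi^+$ and the lower half-space $X_\xi^-$, preceded by a separate first stage that handles the $L^\infty$ portion of the size via its own Schur-test Bessel bound \eqref{count.tiles}. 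None of these ingredients are in your proposal, and they do not follow from the elementary tools you cite. The heuristic you gesture at — ``frequency separation of comparable-scale tents forces genuine disjointness'' — is correct as a geometric observation, but it only gives you that the $T_i^*$ tile $X$ disjointly; it does not, by itself, produce an $L^2$ bound in the absence of a finite global energy.

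A secondary point: the paper passes first to a discretized collection of tents $\E_\Delta$ (Lemma \ref{discretizationlemma} and Theorem \ref{gen.carl.emb.disc}) and reduces to compact Fourier support of $\widehat f$ before starting the greedy selection. These are conveniences rather than ideas, but without them the selection-by-maximal-scale / maximal-frequency steps in the double recursion are not even well posed, so your ``select a maximal collection with respect to some natural order'' would need to be made precise in the same way. In short, the $p=\infty$ endpoint and the interpolation framework you describe are right, but the weak-$L^2$ endpoint requires a Bessel inequality proved by almost orthogonality, not a global Plancherel bound plus a covering lemma, and that is the actual content of the theorem.
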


By symmetry it is no restriction to assume $0<\alpha$ and we shall do so.


The dependence of the constant $C$ on the function $\phi$, 
conditioned on the fixed support condition on $\widehat{\phi}$, 
factors as dependence on the constant
$$\sup_{x}\left[ |\phi(x)| (1+|x|)^{3} +  |\phi'(x)| (1+|x|)^{2}\right]\ .$$
We do not claim that this explicit regularity of $\phi$ is sharp
for the above theorem to hold. 

From now on we fix the parameters $\alpha$ and $\beta$, and for simplicity
of notation write $T$ for $T_{\alpha,\beta}$.

It is convenient to work with a discrete variant of
Theorem \ref{gen.carl.emb}. Fix the parameter 
$0<b\le 2^{-8}$. We introduce the discrete subset $X_\Delta$ 
of points $(x,\xi,s)\in X$  such that there exist integers 
$k,n,l\in \Z$ with $$x=2^{k-4} n,\  \xi=2^{-k-8} b l,\  s=2^k\ .$$ 
We denote by $\E_\Delta$ the collection of all tents $T(x,\xi,s)$
with $(x,\xi,s)\in X_\Delta$. This is a discrete subcollection of
$\E$. However, each tent in $\E_\Delta$ by itself still forms a 
continuum in $X$.

We generate an outer measure $\mu_\Delta$ using $\E_\Delta$ 
as generating collection, setting as before $\sigma_\Delta(T(x,\xi,s))=s$ 
for each tent in $\E_\Delta$.

The following lemma will be used to relate this new measure to the 
previous one.
\begin{lemma}\label{discretizationlemma}
If $(x',\xi',s')\in X$, then there exists a $(x,\xi,s)\in X_\Delta$ 
such that the tent $T(x,\xi,s)$ contains $(x',\xi',s')$ ``centrally'' 
in the sense
$$2^{-3}s<s'\le 2^{-2}s \ ,$$
$$|x'-x|\le 2^{-4} s\ ,$$
$$|\xi'-\xi|\le 2^{-8}b s^{-1}\ .$$
Moreover, there exist two points $(x,\xi_-,s)\in X_\Delta$ and 
$(x,\xi_+,s)\in X_\Delta$ so that the corresponding tents contain 
$(x',\xi',s')$ centrally and satisfy
$$T(x',\xi',s')\subset T(x,\xi_-,s)\cup T(x,\xi_+,s)\ ,$$
$$
T(x',\xi',s')\cap T^b(x,\xi_-,s)\cap T^b(x,\xi_+,s) \subset T^b(x',\xi',s')
\ .$$
\end{lemma}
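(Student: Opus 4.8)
The plan is to choose the discrete data one coordinate at a time, largest coordinate first: first the scale $s$, then the spatial center $x$, then the two frequency centers $\xi_{-},\xi_{+}$; after that every claimed inclusion reduces to an elementary estimate in one variable. Throughout, the point is that the frequency offset introduced, $2^{-k-8}b$, is negligible against $(s')^{-1}\sim 2^{-k}$, while the spatial offset introduced, $2^{-k-5}$, is negligible against $s-s'\sim 2^{k}$.

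Since the half-open intervals $(2^{k-3},2^{k-2}]$, $k\in\Z$, partition $(0,\infty)$, there is a unique $k$ with $2^{k-3}<s'\le 2^{k-2}$; set $s=2^{k}$, which is exactly the first centrality bound. Let $x$ be a point of $2^{k-4}\,\Z$ nearest to $x'$, so $|x'-x|\le 2^{k-5}=2^{-5}s$; let $\xi_{-}$ be the largest point of $2^{-k-8}b\,\Z$ that is $\le\xi'$ and $\xi_{+}$ the smallest that is $\ge\xi'$, so that $\xi_{-}\le\xi'\le\xi_{+}$, $\xi_{+}-\xi_{-}\le 2^{-k-8}b$, and $|\xi'-\xi_{\pm}|\le 2^{-k-8}b=2^{-8}bs^{-1}$. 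Then $(x,\xi_{-},s),(x,\xi_{+},s)\in X_{\Delta}$ and all three centrality inequalities hold for each of them; for the first assertion of the lemma one may simply take $(x,\xi,s)=(x,\xi_{-},s)$. To see that $(x',\xi',s')$ genuinely lies in $T(x,\xi_{\pm},s)$, check $s'\le s$; $|x'-x|\le 2^{-5}s\le s-s'$ (as $s-s'\ge 3s/4$); and, using $0<\alpha\le 1$ and $|\beta|\le 0.9$, $|\alpha(\xi'-\xi_{\pm})+\beta (s')^{-1}|\le 2^{-k-8}b+0.9(s')^{-1}\le (s')^{-1}$, since $2^{-k-8}b\le 2^{-k-16}\le 0.1(s')^{-1}$.

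For the inclusion $T(x',\xi',s')\subset T(x,\xi_{-},s)\cup T(x,\xi_{+},s)$ I would treat the spatial and frequency conditions separately. The spatial condition is identical for $\xi_{-}$ and $\xi_{+}$, and for $(y,\eta,t)\in T(x',\xi',s')$ the triangle inequality gives $t\le s'\le s$ and $|y-x|\le (s'-t)+2^{-5}s\le s-t$, because $s'+2^{-5}s<s$. For the frequency condition, at a fixed height $t$ the allowed $\eta$'s for a tent with frequency center $\zeta$, namely $\{\eta:|\alpha(\eta-\zeta)+\beta t^{-1}|\le t^{-1}\}$, form an interval of radius $\alpha^{-1}t^{-1}$ centered at $\zeta-\alpha^{-1}\beta t^{-1}$, nondegenerate since $|\beta|\le 0.9$; the three intervals corresponding to $\zeta=\xi_{-},\xi',\xi_{+}$ share this radius and have centers in the order $\xi_{-}\le\xi'\le\xi_{+}$, so the union of the outer two contains the middle one exactly when there is no gap between them, i.e. when $\xi_{+}-\xi_{-}\le 2\alpha^{-1}t^{-1}$. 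This holds for every $t\le s'$, since $\xi_{+}-\xi_{-}\le 2^{-k-8}b\le 2^{-k-16}$ while $2\alpha^{-1}t^{-1}\ge 2(s')^{-1}\ge 2^{-k+3}$. Finally, $T(x',\xi',s')\cap T^{b}(x,\xi_{-},s)\cap T^{b}(x,\xi_{+},s)\subset T^{b}(x',\xi',s')$ is immediate: the $y$- and $t$-conditions of $T^{b}(x',\xi',s')$ are already implied by membership in $T(x',\xi',s')$, while any $(y,\eta,t)$ in the left-hand set satisfies $|\eta-\xi_{-}|\le bt^{-1}$ and $|\eta-\xi_{+}|\le bt^{-1}$ with $\xi_{-}\le\xi'\le\xi_{+}$, whence $|\eta-\xi'|\le\max(|\eta-\xi_{-}|,|\eta-\xi_{+}|)\le bt^{-1}$.

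I do not expect a serious analytic obstacle; the care needed is the bookkeeping of powers of $2$ so that each introduced offset is dominated by the relevant scale, together with the one structural observation — the reason two frequency points are needed — that a single lattice tent cannot absorb $T(x',\xi',s')$ in the frequency direction unless $\xi'$ is itself a lattice point (equal-length intervals nest only if they coincide), whereas the pair $\xi_{-}\le\xi'\le\xi_{+}$ straddling $\xi'$ covers it from both sides.
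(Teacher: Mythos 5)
Your proof is correct and follows essentially the same route as the paper's: discretize $s$ to the nearest dyadic scale, then $x$ and finally the two frequency points $\xi_{-}\le\xi'\le\xi_{+}$ straddling $\xi'$, followed by elementary verifications. The only cosmetic difference is that for the two-tent covering in the frequency direction you argue via the geometry of equal-radius intervals (no gap between the outer two), whereas the paper splits directly on the sign of $\alpha(\eta-\xi')+\beta t^{-1}$; the argument for the $T^{b}$ inclusion is the same.
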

\begin{proof}
The interval $[2^2s',2^3s')$ contains a unique point
of the form $2^k$ with $k\in \Z$. We set $s=2^k$.
Then there is a point $x$ of the form $2^{k-4}n$
with some $n\in \Z$ such that $|x-x'|\le 2^{k-4}=2^{-4}s$. 
Likewise, there is a point $\xi$ of the form $2^{-k-8}bl$ with
$l\in \Z$ such that $|\xi-\xi'|\le 2^{-k-8}b=2^{-8}bs^{-1}$.

Informally, this point $\xi$ may be chosen on either
side of $\xi'$. Precisely, we may choose
$\xi_-\le \xi'$ and $\xi_+\ge \xi'$ with
$|\xi_--\xi'|\le 2^{-8}bs^{-1}$ and $|\xi_+-\xi'|\le 2^{-8}bs^{-1}$.
If $(y,\eta, t)\in  T(x',\xi',s')$, then we have
$$t\le s'\le 2^{-2} s\ ,$$
$$|y-x|\le |y-x'|+|x'-x|\le s'-t+2^{-4}s\le s-t\ .$$
If in addition $\alpha(\eta- \xi')+\beta t^ {-1}\ge 0$, then  (recall that $\alpha>0$) 
$$-t^{-1}\le \alpha(\xi'-\xi_+)\le \alpha(\eta-\xi_+)+\beta t^{-1} \le \alpha(\eta-\xi')+\beta t^{-1}\le t^{-1}\ ,$$
while if in addition $\alpha(\eta- \xi')+\beta t^ {-1}\le 0$, then
$$- t^{-1}\le \alpha(\eta-\xi')+\beta t^{-1}\le 
\alpha(\eta-\xi_-)+\beta t^{-1}\le \alpha(\xi'-\xi_-) \le t^{-1}\ .$$
Hence $(y,\eta,t)\in T(x,\xi_-,s)\cup T(x,\xi_+,s)$.
Now let in addition $(y,\eta, t)$ be an element of 
$T^b(x,\xi_-,s)\cap T^b(x,\xi_+,s)$.
If $\eta\ge \xi'$, then
$$ -bt^{-1} < 0  \le \eta-\xi' \le \eta-\xi_- \le bt^{-1}\ ,$$
while if $\eta\le \xi'$, then 
$$-bt^{-1}\le  \eta-\xi_+\le \eta-\xi' \le  0 <  bt^{-1} \ .$$
Hence $(y,\eta,t)\in T^b(x',\xi',s')$.
This completes the proof of the lemma.
\end{proof}

As a consequence of this lemma, if $T$ is a tent in $\E$, then we find
two tents $T^+$, $T^-$ in $\E_\Delta$ such that
$$T\subset T^+\cup T^-\ ,$$
$$\sigma_\Delta(T^+)+\sigma_\Delta(T^-)\le C\sigma(T)\ .$$
This implies for every subset $X'\subset X$
$$\mu(X')\le \mu_\Delta(X')\le C \mu(X')\ .$$
Hence the outer measures $\mu$ and $ \mu_\Delta$ are equivalent.

Moreover, we have
for the same tents and every Borel function $F$
$$S^b(F)(T)\le C [S_\Delta^b (F)(T^+)+S_\Delta^b(F)(T^-)]\ ,$$
where we have defined 
$$S^b_\Delta(F)(T'):= S^b (F)(T')\ .$$
for any tent $T'$ in $\E_\Delta$. 

This implies for every $1\le p\le \infty$
$$
C^{-1}\L^{p}(X,\sigma,S^b)
\le  \L^{p}(X,\sigma_\Delta,S^b_\Delta)\le
\L^{p}(X,\sigma,S^b)\ ,
$$
$$
C^{-1}\L^{p,\infty}(X,\sigma,S^b)
\le  \L^{p,\infty}(X,\sigma_\Delta,S^b_\Delta)\le
\L^{p,\infty}(X,\sigma,S^b)\ .
$$

Hence Theorem \ref{gen.carl.emb} is equivalent to the following
discrete version.

\begin{theorem}[Generalized Carleson embedding, discrete version]
\label{gen.carl.emb.disc}

Let $0<\alpha\le 1$ and $-0.9\le \beta\le 0.9$.
Let $0<b\le 2^{-8}$. Let $\phi$ be a Schwartz function 
with Fourier transform $\widehat{\phi}$ supported in the interval $(-2^{-8}b,2^{-8}b)$, 
and let $2\le p\le \infty$.  
Define for $f\in L^p(\R)$ the function $F$ on $X$ by
$$F(y,\eta,t):=\int_\R f(x) e^{i\eta (y-x)} t^{-1}\phi(t^{-1}(y-x)) \, dx\ .$$
There is some constant $C$ depending only on $\alpha$, $\beta$, $b$, $\phi$, and $p$,
such that if $p \neq 2$, 
$$\|F\|_{\L^{p}(X,\sigma_\Delta,S^b_\Delta)}\le C \|f\|_p\ ,$$
and if $p=2$, 
$$\|F\|_{\L^{2,\infty}(X,\sigma_\Delta,S^b_\Delta)}\le C \|f\|_2\ .$$
\end{theorem}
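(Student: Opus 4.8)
The plan is to split the size $S^b$ into its two summands --- the \emph{energy} size $S^b_{\mathrm{en}}(F)(T):=\big(s^{-1}\int_{T_{\alpha,\beta}(x,\xi,s)\setminus T^b(x,\xi,s)}|F|^2\,dy\,d\eta\,dt\big)^{1/2}$ and the \emph{$L^\infty$} size $S^b_{\infty}(F)(T):=\sup_{T_{\alpha,\beta}(x,\xi,s)}|F|$ --- and, using quasi-subadditivity of the outer $\L^p$ norms, to prove the required bounds for each of them separately. For each of the two pieces I would only prove the strong bound at the endpoint $p=\infty$ and the weak-type bound at $p=2$; the intermediate range $2<p<\infty$ then follows from Marcinkiewicz interpolation (Proposition \ref{p.marcinkiewicz}), since $f\mapsto F$ is linear (hence quasi-sublinear) and maps $L^2+L^\infty$ into the Borel functions on $X$.

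The $S^b_\infty$ piece is the easier one. Pointwise one has $|F(y,\eta,t)|\le\|\phi\|_1\|f\|_\infty$, which already yields the $p=\infty$ bound; more generally, convolution against the $L^1$-normalised Schwartz bump $t^{-1}\phi(t^{-1}\cdot)$ is dominated by the Hardy--Littlewood maximal operator, so $|F(y,\eta,t)|\le C\,(Mf)(y)$, and by Cauchy--Schwarz $|F(y,\eta,t)|\le\|\phi\|_2\|f\|_2\,t^{-1/2}$. Hence $\{|F|>\lambda\}$ is contained in $\{y:(Mf)(y)>c\lambda\}\times\R\times(0,C\lambda^{-2}\|f\|_2^2)$, and I would cover this region by tents of $\E_\Delta$ and bound its outer measure by $C\lambda^{-2}\|f\|_2^2$ using the weak $(2,2)$ inequality for $M$; this gives the weak $(2,2)$ estimate for the $S^b_\infty$ part.

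The energy piece $S^b_{\mathrm{en}}$ carries the real content. The key structural observation is that the Fourier support hypothesis $\supp\widehat\phi\subset(-2^{-8}b,2^{-8}b)$ makes $F(y,\eta,t)$ depend only on the part of $\widehat f$ within distance $2^{-8}bt^{-1}$ of $\eta$; on a tent $T_{\alpha,\beta}(x,\xi,s)$ one has $|\eta-\xi|\le C_\alpha t^{-1}$, while deleting the inner tent $T^b$ imposes $|\eta-\xi|>bt^{-1}$. I would decompose $f$ into smooth Littlewood--Paley pieces $f^{(j)}$ with frequency support in annuli of radius $\sim 2^{j}b s^{-1}$ about $\xi$; the above two constraints then force the corresponding piece of $F$ to vanish on $T_{\alpha,\beta}(x,\xi,s)\setminus T^b(x,\xi,s)$ unless $t$ ranges over an interval of \emph{bounded} logarithmic length (depending only on $\alpha,b$). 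For $p=\infty$ one first localises $f$ to dyadic spatial annuli about the base of the tent, controlling the resulting series by the rapid decay of $\phi$, then applies Plancherel in $y$ and $\eta$ scale-by-scale and sums a convergent geometric series to obtain $S^b_{\mathrm{en}}(F)(T)\le C\|f\|_\infty$, i.e.\ the $p=\infty$ endpoint.

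The remaining, and hardest, task is the weak $(2,2)$ bound for $S^b_{\mathrm{en}}$, where I would run the usual time-frequency selection (stopping-time) argument adapted to $\E_\Delta$. Given $\lambda>0$, greedily select a maximal subfamily $\{T_i\}\subset\E_\Delta$ of tents with $S^b_{\mathrm{en}}(F)(T_i)>\lambda$, put $E=\bigcup_iT_i$, and verify two things: (i) $\essup_{X\setminus E}S^b(F)\le C\lambda$, which follows from maximality of the selection together with the overlap geometry of tents recorded in Lemma \ref{discretizationlemma}; and (ii) the energy estimate $\sum_i\sigma_\Delta(T_i)\le C\lambda^{-2}\|f\|_2^2$. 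For (ii) one attaches to each selected $T_i$ a time-frequency localised ``projection'' $g_i$ of $f$, supported in phase space near $T_i$, for which $\lambda^2\sigma_\Delta(T_i)\le C\|g_i\|_2^2$; the disjointness of the selected tents in the frequency variable, together with the Fourier localisation of $\phi$, then yields the Bessel-type almost-orthogonality inequality $\sum_i\|g_i\|_2^2\le C\|f\|_2^2$. This orthogonality/energy estimate is where I expect the main difficulty to lie; once it is in place, assembling the endpoint bounds and interpolating completes the proof.
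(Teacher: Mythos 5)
Your high-level plan — split $S^b$ into an energy part and a sup part, prove strong $p=\infty$ and weak $p=2$ endpoints, interpolate via Proposition \ref{p.marcinkiewicz} — is the right skeleton, and your treatment of the $p=\infty$ endpoint (trivial $L^1$ bound for the sup part; scale-by-scale Plancherel after Littlewood--Paley and spatial localisation for the energy part) is a workable variant of the paper's Calder\'on reproducing formula / $TT^*$ argument. The weak-$(2,2)$ bound for the energy part is also roughly the right shape, though your sketch omits the selection ordering that makes the Bessel estimate go through: the paper first maximises over the frequency component $\xi$ and only then over the scale $s$, and it is precisely this order that forces the \emph{spatial} intervals of competing selected tents to be disjoint. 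Your phrase ``disjointness of the selected tents in the frequency variable'' misidentifies where the orthogonality actually comes from; without the spatial separation the Schur-type sum diverges.

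The genuine gap is in the weak-$(2,2)$ estimate for the $S^b_\infty$ part. You write that $\{|F|>\lambda\}\subset\{Mf>c\lambda\}\times\R\times(0,C\lambda^{-2}\|f\|_2^2)$ and propose to cover that product region by tents with total premeasure $\lesssim\lambda^{-2}\|f\|_2^2$. That is impossible: the region is a full slab in the frequency variable $\eta$, whereas a single generalized tent $T_{\alpha,\beta}(x,\xi,s)$, at any fixed height $t>0$, only covers an $\eta$-interval of length $\sim t^{-1}$. To cover $\{(y,t)\}\times\R$ for even a single point $(y,t)$ with $t>0$ one needs infinitely many tents, each contributing premeasure at least $t$, so the outer measure of the slab is infinite. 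The inclusion you wrote ignores the frequency localisation of $F$ forced by $\supp\widehat\phi\subset(-2^{-8}b,2^{-8}b)$, which is exactly what cuts the super level set down to something coverable. The paper handles this endpoint by the same kind of greedy time-frequency selection used for the energy part — picking points $(y_k,\eta_k,t_k)$ with $|F|>\lambda$, centring tents on them, and controlling $\sum_k s_k$ via the almost-orthogonality estimate \eqref{modifiedschurc} — rather than by a maximal function argument. The maximal function shortcut is fine for the two-dimensional tent outer measure of Section \ref{t1section}, but it does not transfer to the generalized tents here because of the extra unbounded frequency direction.
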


\begin{proof}[Proof of Theorems \ref{gen.carl.emb}
and \ref{gen.carl.emb.disc}.] 
Since both theorems are equivalent, we will only prove 
the discrete version, Theorem \ref{gen.carl.emb.disc}.
Hence we will only work with the discrete quantities
$\mu_\Delta$ and $S^b_\Delta$ and for simplicity of 
notation omit the subscribt $\Delta$.
Since $b$ is fixed, we also denote $S:= S^b$.

The theorem follows  by Marcinkiewicz interpolation, Proposition
\ref{p.marcinkiewicz}, between the end point
cases $p=2$ and $p=\infty$.

\subsection{The endpoint $p=\infty$}

We need to prove that for every $(x,\xi,s)\in X_\Delta$ and
every $f\in L^\infty(\R)$ we have
$$S(F)(T(x,\xi,s))\le C\|f\|_\infty\ \ .$$
The size $S$ is defined as a sum of an $L^2$ portion and an 
$L^\infty$ portion. It suffices to estimate both portions separately.
Note that for all $y,\eta,t$ we trivially
have $|F(y,\eta,t)|\le \|f\|_\infty \|\phi\|_1$ and this 
establishes the desired bound on the $L^\infty$ portion of $S$.

To estimate the $L^2$ portion of the size we first establish
the estimate
\begin{equation}\label{l2tentbound}
\int_{T(x,\xi,s)\setminus T^b(x,\xi,s)}|F(y,\eta,t)|^2 \, dy\,d\eta\, dt\le 
C\|{f}\|_2^2
\end{equation}
for every function $f\in L^2(\R)$.
Fix such a function $f$, we may assume by normalization that $\|f\|_2=1$.
Replacing the domain of integration by a larger region we can 
estimate the left-hand-side of (\ref{l2tentbound}) by
$$  
\int_0^\infty \int_\R \int_{bt^{-1}\le |\eta-\xi|\le 2\alpha^{-1} t^{-1}} |F(y,\eta,t)|^2\, d\eta \, dy \, {dt}\ .$$
It suffices to estimate the integral over the region where 
$\eta> \xi$ , since by symmetry there is an analoguous
estimate for the integral over region $\eta<\xi$.
We replace the integration variable $\eta$ by $\gamma$ 
such that $\eta-\xi=\gamma t^{-1}$. Using Fubini we are reduced to estimating
$$  
\int_{b}^{2\alpha^{-1}}
\int_0^\infty \int_\R  |F(y,\xi+\gamma t^{-1},t)|^2 \, dy \, \frac {dt}t \, d\gamma\ .$$
We first estimate the inner double integral
for fixed $\gamma$.

Define for each $y,\gamma,t$ the bump function $\phi_{y,\gamma,t}$ by 
$$\phi_{y,\gamma,t}(x)= e^{-i(\xi+\gamma t^{-1}) (y-x)} t^{-1}\overline{\phi(t^{-1}(y-x))} \ . $$
We are interested in the region $\gamma\ge b$, where the
modulated function $\phi_{y,\gamma,t}e^{-i\xi.}$
has integral zero by support consideration of $\widehat{\phi}$. 
Hence its primitive is absolutely 
integrable with good bounds, which we will use later when applying
partial integration.

We have 
$$( \int _0^\infty \int_\R 
|\<f,\phi_{y,\gamma ,t}\> |^2 \, dy\, \frac{dt}t)^2
$$
$$\le \|   \int_0^\infty \int_\R        \<f,\phi_{y,\gamma ,t}\> 
\phi_{y,\gamma ,t} \, dy\, \frac{dt}t\|_2^2 $$
$$
\le    \int _0^\infty \int_\R     \int _0^\infty \int_\R     
|\<f,\phi_{y,\gamma ,t}\> 
\<\phi_{y,\gamma ,t}, \phi_{z,\gamma ,r} \>
\<\phi_{z,\gamma ,r},f\>| 
\, dz\, \frac{dr}r
\, dy\, \frac{dt}t \ .
$$
Estimating the smaller  of the inner products with $f$ by the  larger
one  and using symmetry we may estimate this by
\begin{equation}\label{smallbigf}
\le    2\int _0^\infty \int_\R |\<f,\phi_{y,\gamma ,t}\>|^2 
[    \int _0^\infty \int_\R     
|\<\phi_{y,\gamma ,t}, \phi_{z,\gamma ,r} \>| 
\, dz\, \frac{dr}r 
]\, dy\, \frac{dt}t \ .
\end{equation}
 

We consider the inner double integral of (\ref{smallbigf}).
Considering first the region $t\le r$ and doing partial integration in
the inner product 
$$
\<\phi_{y,\gamma ,t}, \phi_{z,\gamma ,r} \>=
\<\phi_{y,\gamma ,t}e^{-i\xi .}, \phi_{z,\gamma ,r} e^{-i\xi.}\>\ ,$$
integrating the first and differentiation the second bump function, we estimate
 the integral over this region by 
$$C\int_\R  \int_t^\infty 
\int_\R 
 (1+|t^{-1}(y-x)|)^{-2}
 r^{-2}(1+|r^{-1}(z-x)|)^{-2} dx  \frac{dr}r\, dz$$
$$\le C\int_\R  \int_t^\infty 
 (1+|t^{-1}(y-x)|)^{-2}
 r^{-2} {dr}\, dx$$
$$\le C\int_\R   
 t^{-1}(1+|t^{-1}(y-x)|)^{-2}
 \, dx\le C\ .$$
In the region $t\ge r$ we do partial integration in reverse,
differentiating the first and integrating the second bump function, 
to obtain the estimate for the integral over this region by
$$C\int_\R  \int_0^t \int_\R 
 t^{-2}(1+|t^{-1}(y-x)|)^{-2}
 (1+|r^{-1}(z-x)|)^{-2} dx \frac{dr}r\, dz$$
$$\le C\int_\R  \int_0^t 
 t^{-2}(1+|t^{-1}(y-x)|)^{-2}
  {dr}\, dx$$
$$\le C\int_\R  
 t^{-1}(1+|t^{-1}(y-x)|)^{-2}
  \, dx\le C\ .$$
Inserting these two estimates into (\ref{smallbigf}) gives
$$( \int _0^\infty \int_\R 
|\<f,\phi_{y,\gamma ,t}\> |^2 \, dy\, \frac{dt}t)^2
\le C
\int _0^\infty \int_\R 
|\<f,\phi_{y,\gamma ,t}\> |^2 \, dy\, \frac{dt}t\ ,$$
which proves (\ref{l2tentbound}).

We note that if we restrict the integral on the left hand side 
of (\ref{l2tentbound}) to the region $\eta>\xi$, 
we may improve the bound on the right-hand-side to 
\begin{equation}\label{fxibound}
C\|\widehat{f}1_{(\xi,\infty)}\|_2^2
\ .
\end{equation}
This follows simply by support considerations on the Fourier
transform side.

Now assume that $f\in L^\infty(\R)$ and write
$f=f_1+f_2$ where 
$$f_1=f1_{[x-2s,x+2s]}\ .$$
By linearity we may split $F=F_1+F_2$ correspondingly.
We have $\|f_1\|_2^2\le Cs\|f\|_\infty^2$, so by the above
$L^2$ bound we have
$$ (s^{-1}\int_{T(x,\xi,s)\setminus T^b(x,\xi,s)}|F_1(y,\eta,t)|^2 \, dy\, d\eta  \,{dt})^{1/2}
\le C\|f\|_\infty\ .$$
It remains to prove the analoguous estimate for $F_2$.
But for $y\in [x-s,x+s]$ and $t<s$ we have
$$F_2(y,\eta, t)\le \int_{[-s, s]^c} |f_2(y-z)| t^{-1} |\phi(t^{-1}z)|\, dz \le C (t/s)\|f\|_\infty\, $$
where we have crudely estimated the integral of the tail of $\phi$. But then 
$$ (s^{-1}\int_{T(x,\xi,s)\setminus T^b(x,\xi,s)}|F_2(y,\eta,t)|^2 \, dy\, d\eta  \,{dt})^{1/2}$$
$$\le  C\|f\|_\infty (s^{-1}\int_{0}^s \int_{\xi-2\alpha^{-1}t^{-1}}^{\xi+2\alpha^{-1}t^{-1}} 
\int_{x-s}^{x+s} (t/s)^{2} \, dy\, d\eta  \,{dt})^{1/2}
\le C\|f\|_\infty\ .$$
This completes the proof of the endpoint $p=\infty$ of 
Theorem \ref{gen.carl.emb}.

\subsection{The endpoint $p=2$}

We need to find for each $\lambda>0$ 
a collection $\Q \subset X_\Delta$
such that 
$$\sum_{(x,\xi,s)\in \Q} s\le C\lambda^{-2} \|f\|_2^2$$
and
for every $T'\in \E_\Delta$
we have 
\begin{equation}\label{small2inftysize}
S(F 1_{X\setminus E})(T')\le \lambda\ ,
\end{equation}
where $E=\bigcup_{(x,\xi,s)\in \Q} T(x,\xi,s)$.

We first reduce to the special case that the support of $\widehat{f}$ 
is compact.  Choose an unbounded  monotone increasing sequence $\xi_k$, 
$k=0,1,2,\dots$ with $\xi_0=0$ such that for $f_k$ defined by
$\widehat{f}_k=\widehat{f}[1_{(-\xi_{k},-\xi_{k-1})}+1_{(\xi_{k-1},\xi_{k})}]$ 
we have
$$\|f_k\|_2\le C2^{-10k}\|f\|_2\ .$$
Applying the special case to each of the functions $f_k$
with $\lambda_k=2^{-k}\lambda $ we obtain corresponding 
collections $\Q_k$.
Then clearly
$$\sum_{k=1}^\infty  \sum_{(x,\xi,s)\in \Q_k} s\le 
C \sum_k (2^{-k}\lambda)^{-2} 2^{-20k} \|f\|_2^2
\le C \lambda^{-2} \|f\|_2^2\ .$$
If $E$ denotes the union of all $T(x,\xi,s)$
with $(x,\xi,s)\in \bigcup_k \Q_k$, then 
by countable subadditivity of the size $S$ we have for every $T'\in \E_\Delta$
$$ S(F1_{X\setminus E})(T')\le \sum_{k=1}^\infty S(F_k1_{X\setminus E})(T')
\le \sum_{k=1}^\infty  2^{-k}\lambda \le \lambda\ .$$
This completes the reduction to the case that $\widehat{f}$ 
has compact support, and we shall henceforth assume 
compact support of $\widehat{f}$. 

By scaling of outer Lebesgue spaces we may
assume $\|f\|_2=1$. Fix $\lambda>0$. 
We first set out to cover all points $(y,\eta,t)\in X$ 
with $|F(y,\eta,t)|> \lambda$ with tents.
Note that there is an a priori upper bound on $t$ for any
such point since by Cauchy-Schwarz we have directly from the definition of $F$:
$$|F(y,\eta,t)|\le C t^{-1/2} \|f\|_2 \|\phi\|_2\le Ct^{-1/2}\ .$$
Assume there is a point $(y,\eta,t)$
with $|F(y,\eta,t)|> \lambda$, then by Lemma
\ref{discretizationlemma} we find a tent $T(x,\xi,s)$ centrally 
containing the point $(y,\eta,t)$.
Because of the upper bound on $t$ and since $(x,\xi,s)\in X_\Delta$ and 
therefore $s=2^k$ for some integer $k$, we may choose $(y,\eta,t)$ 
and $(x,\xi,s)$ such that $s$ is maximal. Denote these points by
$(y_1,\eta_1,t_1)$ and $(x_1,\xi_1,s_1)$ and the tent 
$T(x_1,\xi_1,s_1)$ by $T_1$.

We continue to select tents by iterating this procedure.
Assume that we have already chosen points $(y_k,\eta_k,t_k)\in X$ and
tents $T_k=T(x_k,\xi_k,s_k)$ for all $1\le k< n$.
Assume there is a point $(y,\eta,t)$ 
with $|F(y,\eta,t)|> \lambda$ 
not contained in the union of the tents
$T_k$ with $1\le k<n$. 
Then we choose such a point $(y_n,\eta_n,t_n)$ 
and a tent $T_n=T(x_n,\xi_n,s_n)$ centrally containing
$(y_n,\eta_n,t_n)$ such that $s_n$ is maximal.
We have $|F(y_n,\eta_n,t_n)|> \lambda$ and
$$(y_n,\eta_n,t_n)\not\in \bigcup_{k=1}^{n-1} T_k\ .$$

We claim that
\begin{equation}\label{count.tiles}
\sum_{k=1}^n s_k \le C\lambda^{-2}\ .
\end{equation}
To see the claim, let $K_m$ be the set of indices $k$ with $1\le k\le n$ such that
$$
2^m\lambda 
\le  |F(y_k,\eta_k,t_k)|\le  2^{m+1} \lambda \ .$$ 
Then we have
$$\sum_{k=1}^n s_k
\le C \sum_{m=0}^\infty 
2^{-2m} \lambda^{-2}
\sum_{k\in  K_m} t_k |F(y_k,\eta_k,t_k)|^2\ .$$
The claim (\ref{count.tiles}) will follow if we show for fixed $m\ge 0$
\begin{equation}\label{seta}
\sum_{k\in  K_m} t_k |F(y_k,\eta_k,t_k)|^2 \le C \ .
\end{equation}
Define 
\begin{equation}\label{wavepacket}
\phi_k(x):=\phi_{y_k,\eta_k,t_k}(x):=e^{-i\eta_k (y_k-x)}t_k^{-1/2}\overline{\phi(t_k^{-1}(y_k-x))}\ ,
\end{equation}
so that
$$t_k^{1/2}F(y_k,\eta_k,t_k)=\<f,\phi_k\>\ .$$
Let $A$ denote the left-hand-side of (\ref{seta}).
Assume we can show for every $k\in K_m$
\begin{equation}\label{modifiedschurc}
\sum_{l\in K_m: s_l\le s_k} (t_l/t_k)^{1/2}|\<\phi_k,\phi_l\>|\le C \ .
\end{equation}
Then we obtain
$$A^2 \le \|\sum_{k\in K_m} \<f,\phi_k\>\phi_k\|_2^2 $$
$$\le \sum_{k,l\in K_m} \<f,\phi_k\>\<\phi_k,\phi_l\> \<\phi_l,f\>$$
$$\le 2\sum_{k,l\in K_m: s_l\le s_k} |\<f,\phi_k\>\<\phi_k,\phi_l\> \<\phi_l,f\>| $$

$$\le C\sum_{k,l\in K_m: s_l\le s_k} (t_l/t_k)^{1/2}
|\<f,\phi_k\>|^2 |\<\phi_k,\phi_l\>| \le C A \ .$$
Here in the passage from the penultimate to ultimate line
we have used that $k,l\in K_m$ and hence  $t_k^{-1/2}|\<f,\phi_k\>|$ and
$t_l^{-1/2}|\<f,\phi_l\>|$ are within a factor of $2$ of each other
and in the last line we have used (\ref{modifiedschurc}). 
Dividing by $A$ on both sides of the displayed inequality 
we have reduced the proof of the desired estimate (\ref{seta})
to the proof of (\ref{modifiedschurc}).

To prove (\ref{modifiedschurc}), fix $k$. 
If $l\in K_m$ with $s_l\le s_k$ such that
$\<\phi_k,\phi_l\>\neq 0$ then
the supports of $\widehat{\phi}_k$ and
$\widehat{\phi}_l$ overlap and hence there
are numbers
 $- 2^{-8} \le  \gamma,\delta \le 2^{-8}$
such that
$$
\eta_l+\delta b t_l^{-1}=\eta_k+\gamma b t_k^{-1}
\ .$$
Now suppose that there is another such $l'$ and we have 
analoguously
$$
\eta_{l'}+\delta' b t_{l'}^{-1}=\eta_k + \gamma'  b t_k^{-1}
\ .$$
Assume without loss of generality that $T_{l}$ is selected prior to $T_{l'}$ 
and thus $s_k^{-1}\le s_{l}^{-1}\le  s_{l'}^{-1}$. 
Then we have by central containment of $(y_l,\eta_l,t_l)$ in $T_l$ 
$$|\alpha (
\eta_{l'}- \xi_l)+\beta t_{l'}^{-1}|\le 
|\alpha (
\eta_{l}-\xi_l)|+|\alpha(\eta_l-\eta_{l'})|+|\beta t_{l'}^{-1}|
$$
$$\le b t_{l}^{-1}+|\eta_l-\eta_{l'}| + |\beta t_{l'}^{-1}|
\ .$$
Now using the information from the support of the bump functions
we may estimate the latter by
$$\le b t_l^{-1} 
+\delta  b t_l^{-1}+\delta' b t_{l'}^{-1}
+\gamma b t_k^{-1}+\gamma' b t_k^{-1}+|\beta t_{l'}^{-1}|
\le t_{l'}^{-1}
\ .$$
Here we have used again the central containment to estimate
the inverse powers of $t_k$ and $t_l$ by that of $t_{l'}$.

This implies that $[x_l-2^{-8}s_l,x_l+2^{-8}s_l]$ and 
$[x_{l'}-2^{-8}s_{l'},x_{l'}+2^{-8}s_{l'}]$ 
are disjoint. For if they were not disjoint, then, since $s_{l'}\le s_l$,
we would conclude
$$|y_{l'}-x_l|\le |y_{l'}-x_{l'}|+|x_{l'}-x_{l}|$$
$$\le 2^{-4}s_{l'}+2^{-4}s_l < s_l-t_{l'}$$
and together with the previous estimate for $\xi_l-\eta_{l'}$  this implied that
the point $(y_{l'},\eta_{l'},t_{l'})$ was in the tent $T_l$,  
contradicting the choice of this point.

 The argument above in particular shows that $|x_k-x_l|\ge 2^{-8}s_k$.   
Let $\overline{x}$ be the midpoint of $x_l$ and $x_k$ and
let $H_l$ and $H_{k}$ be the half lines emanating from 
the midpoint containing $x_l$ and $x_{k}$ respectively.
We then have 
$$|\<\phi_k,\phi_l\>|\le \|\phi_k\|_{L^1(H_k)}\|\phi_l\|_{L^\infty(H_k)}+
\|\phi_k\|_{L^\infty(H_l)}\|\phi_l\|_{L^1(H_l)}\ .$$
Thanks to the rapid decay of the wave packets and
$s_l\le s_k$ 
  and the fact  that $|x_k-x_l| \ge 2^{-8}s_k$ 
we can estimate the last display by
$$C (s_ls_k)^{-1/2}\int (1+(\frac{x-x_k}{s_k})^2
)^{-2} 1_{[x_l-2^{-8}s_l,x_l+2^{-8}s_l]}(x)\, dx\ .$$
 
By disjointness of the intervals $[x_l-2^{-8}s_l,x_l+2^{-8}s_l]$ 
for different $l$ we obtain
$$\sum_{l\in K_m,s_l\le s_k}
 s_l^{1/2} |\<\phi_k,\phi_l\>|$$
$$\le C s_k^{-1/2}\int (1+(\frac{x-x_k}{s_k})^2)^{-1}\, dx\le C s_k^{1/2}\ .$$
This proves (\ref{modifiedschurc}) since $t_l$ and $t_k$ are
are comparable to $s_l$ and $s_k$, and hence completes the proof of
(\ref{count.tiles}).

If the iterative selection of tents $T_n$ stops because
of lack of suitable points $(y,\eta,t)$ with large enough 
value $F(y,\eta,t)$, then clearly $F$ is bounded by $ \lambda$ outside the union
$\bigcup_{k=1}^{n-1} T_k$. If the iterative selection does not stop, we claim that still
$F$ is bounded  above by $\lambda$  outside the union $\bigcup_{k=1}^\infty T_k$. Namely, assume
the point $(y,\eta,t)$ is outside this union. Since by (\ref{count.tiles})
we have $s_k\to 0$ as $k\to \infty$, we have $s_k<t$ for some
$k$. By maximal choice of $s_k$ we have $F(y,\eta,t)\le  \lambda$.
This proves the desired bound on $F$.
In the case of infinitely many selected tents $T_k$, it also follows
by a limiting argument from (\ref{count.tiles}) that 
$\sum_{k=1}^\infty s_k \le C\lambda^{-2}$ .

Summarizing, we have found a collection $\Q_0$ of tents such that
$$\sum_{T\in \Q_0}^\infty \sigma(T) \le C \lambda^{-2}$$
and if we set
$$E=\bigcup_{T\in \Q_0} T\ ,$$
then we have
$$F(y,\eta,t)\le \lambda$$
for all points $(y,\eta,t)$ in the complement of $E$.
In what follows, we shall no longer need the selected tents 
explicitly, and hence we shall free the symbols $T_k,x_k,\xi_k,s_k$  
to have new meanings in the further selection process. 

We need to select tents of large $L^2$ portion of the size.
Given a number $\xi$, typically arising as second parameter of a tent 
$T(x,\xi,s)$,  we split the space $X$ into upper half 
$$X_\xi^+=\{(y,\eta,s)\in X: \eta\ge \xi\}$$
and lower half $X_\xi^-=X\setminus X_\xi^+$.
We first focus on $X_\xi^+$.

Call a point $(x,\xi,s)\in X_\Delta$ bad, if 
\begin{equation}\label{lowerl2bound} 
s^{-1} \int_{(T(x,\xi,s)\cap X_\xi^+)\setminus (T^b(x,\xi,s)\cup E)}|F(y,\eta,t)|^2\, dy\, d\eta \, {dt}\ge 2^{-8}\lambda^2\ .
\end{equation}
By the estimate (\ref{l2tentbound}) we obtain an a priori
upper bound $2^{k_{\max}}$ for the third component $s$ of any bad point $(x,\xi,s)$. 
Given such an upper bound, the parameter $\xi$ becomes a multiple of 
$2^{-8-k_{\max}}b$ and is thus a discrete parameter. Since $\widehat{f}$ has 
compact support, 
we obtain from observation (\ref{fxibound}) an upper bound for $\xi$ 
depending on the support of $\widehat{f}$. Hence there is a maximal possible 
value $\xi_{\max}$ for the second component of a bad point. 
We choose some bad point $(x_1,\xi_1,s_1)$ with 
$\xi_1=\xi_{\max}$ which maximizes $s_1$ under the constraint $\xi_1=\xi_{\max}$.
Define the tents 
${T}_1=T(x_1,\xi_1,s_1)$ 
and
${T}_1^b=T^b(x_1,\xi_1,s_1)$, and define $X_1^+=X_{\xi_1}^+$.
Note that by maximizing $s_1$ for fixed $\xi_1$ and $x_1$ we guarantee that \eqref{lowerl2bound} 
is sharp up to a factor of $2$ and hence the selected tent satisfies an upper bound
$$ 
s^{-1} \int_{(T(x,\xi,s)\cap X_\xi^+)\setminus (T^b(x,\xi,s)\cup E)}|F(y,\eta,t)|^2\, dy\, d\eta \, {dt}\le \lambda^2\ .
$$

Now we iterate this selection: assume we have already chosen 
points $(x_k,\xi_k,s_k)\in X_\Delta$
for $1\le k<n$ and we have defined tents ${T}_k$, $T_k^b$ 
for $1\le k<n$.
Define $E_n=E\cup \bigcup_{k=1}^{n-1} {T}_k$.
We update the definition of a bad point $(x,\xi,s)$ to be 
a point in $X_\Delta$ with
$$s^{-1}\int_{(T(x,\xi,s)\cap X_\xi^+)\setminus (T^b(x,\xi,s)\cup E_n)} 
|F(y,\eta,t)|^2\, dy d\eta\, dt
\ge 2^{-8}\lambda^2\ .$$
Again, there is a (possibly new) maximal value $\xi_{\max}$
for the second component $\xi$ of a bad point. We pick one bad
point $(x_n,\eta_n,s_n)$ with $\xi_n=\xi_{\max}$ which maximizes
the value of $s_n$ among all bad points $(x,\xi,s)$ with $\xi=\xi_{\max}$.
Then we define the tents ${T}_n=T(x_n,\xi_n,s_n)$ and
${T}_n^b=T^b(x_n,\xi_n,s_n)$
and define $X_n^+=X_{\xi_n}^+$.
This completes the $n$-th selection step.

We introduce the notation
$$ T_n^*= (T_n\cap X_n^+)\setminus (T_n^b\cup E_n)\ .$$

We claim the analogue of (\ref{count.tiles}), namely 
\begin{equation}\label{count.tiles.plus}
\sum_{k=1}^n s_k \le C\lambda^{-2} \ .
\end{equation}
To prove (\ref{count.tiles.plus}), it suffices to show
$$\sum_{k=1}^n \int_{T_k^*}|F(y,\eta,t)|^2 \, dyd\eta dt \le C\ .$$
With $\phi_{y,\eta,t}$ defined analoguously to (\ref{wavepacket}) we
may write for the left hand side of the last display 
$$A:=\sum_{k=1}^n \int_{T_k^*}|\<f,\phi_{y,\eta,t}\>|^2 \, dyd\eta \frac{dt}t \ . $$
Then we have by Cauchy-Schwarz
\begin{equation}\label{lacunarybessel}
A^2 \le \|\sum_{k=1}^n  \int_{{T}_k^*} \<f,\phi_{y,\eta,t}\>\phi_{y,\eta,t}
dy\, d\eta \, \frac{dt}{t} 
\|_2^2
\end{equation}
$$
 =\sum_{k,l=1}^n  \int_{{T}_k^*\times {T}_l^*}
\<f,\phi_{y,\eta,t}\>
\<\phi_{y,\eta,t},\phi_{y',\eta', t'}\>
\<\phi_{y',\eta',t'},f\>
dy\, d\eta \, \frac{dt}{t} dy'\, d\eta' \, \frac{dt'}{t'} 
$$
$$=\sum_{k,l=1}^n  \int_{{T}_k^*\times {T}_l^*: B^{-1}t\le t'\le B t}
\dots 
+2 
\sum_{k,l=1}^n  \int_{{T}_k^*\times {T}_l^*: B t'\le t}
\dots \ ,$$
where the large number $B=2^{8}\alpha^{-1}b^{-1}$ determines the cutoff in the last line between diagonal and
off-diagonal part, the latter being estimated by twice the upper triangular part
using symmetry.  In the diagonal term we use symmetry to estimate the smaller of
the inner products with $f$ by the larger one and obtain the upper
bound
$$2 \sum_{k,l=1}^n  
\int_{{T}_k^*\times {T}_l^*: B^{-1}t \le t'\le B t}
|\<f,\phi_{y,\eta,t}\>|^2
|\<\phi_{y,\eta,t},\phi_{y',\eta', t'}\>|
dy\, d\eta \, \frac{dt}{t} dy'\, d\eta' \, \frac{dt'}{t'} $$
$$\le 2A \sup_{k,(y,\eta,t)\in T_k^*}\left(\sum_{l=1}^n
\int_{{T}_l^*: B^{-1}t\le t'\le B t}
|\<\phi_{y,\eta,t},\phi_{y',\eta', t'}\>| dy'\, d\eta' \, \frac{dt'}{t'} \right)$$
$$\le 2A \sup_{k,(y,\eta,t)\in T_k^*}\left(
\int_{B^{-1}t\le t'\le B t}\int_{\R^2}
|\<\phi_{y,\eta,t},\phi_{y',\eta', t'}\>| dy'\, d\eta' \, \frac{dt'}{t'} \right)\ .$$
Here we have used that the regions $T_l^*$ are pairwise disjoint.
Integrating over the $t'$- interval of bounded $dt'/t'$-measure
estimates the previous display by
$$\le C  A \sup_{k,(y,\eta,t)\in T_k^*} \left(\sup_{B^{-1}t\le t'\le tB}
\int_{\R^2}
|\<\phi_{y,\eta,t},\phi_{y',\eta', t'}\>| dy'\, d\eta'\right)\ . $$
For the $\eta'$ integration we use that
$\widehat{\phi}_{y,\eta,t}$ is supported on an interval of
length $t^{-1}$ and $t\sim t'$:
$$\le C A  \sup_{k,(y,\eta,t)\in T_k^*} \sup_{B^{-1}t\le t'\le tB}
  \sup_{\eta'}\, \,  t^{-1} \int_{\R}
\<|\phi_{y,\eta,t}|,|\phi_{y',\eta', t'}|\> dy' \ .$$
For the $y'$ integration we use that 
${\phi}_{y,\eta,t}$ is an $L^2$ normalized wave
packet adapted to an interval of length $t$. This estimates the last display by $CA$.

Turning to the off diagonal term in (\ref{lacunarybessel}) we estimate it 
with Cauchy Schwarz and the upper bound on the selected tents by
$$2
\sum_{k=1}^n  
\left( 
\int_{{T}_k^*}|\<f, \phi_{y,\eta,t}\>|^2\, dyd\eta \frac{dt}t
\right)^{1/2}
H_k^{1/2}
\le C
\sum_{k=1}^n  \lambda s_k^{1/2} H_k^{1/2}\ ,$$
where $H_k$ is equal to
$$\int_{{T}_k^*}
\left(\sum_{l=1}^n  \int_{{T}_l^*: B t'\le t}
|\<\phi_{y,\eta,t},\phi_{y',\eta', t'}\>
\<\phi_{y',\eta',t'},f\>| dy'\, d\eta' \, \frac{dt'}{t'} 
\right)^{2}
dy\, d\eta \, \frac{dt}{t}\ .$$
Using the pointwise bound on $F(y,\eta,t)$ outside $E$ we can estimate $H_k$ by
$$ \int_{{T}_k^*} 
\left(\sum_{l=1}^n  \int_{{T}_l^*: B t'\le t}
 c\lambda t'^{1/2} |\<\phi_{y,\eta,t},\phi_{y',\eta', t'}\>| dy'\, d\eta' \, \frac{dt'}{t'} 
\right)^{2}
dy\, d\eta \, \frac{dt}{t}\ .$$

Let $(y,\eta,t)\in T_k^*$ and $(y',\eta',t')\in T_l^*$ with $B t'\le t$.
Assume that the inner product $\<\phi_{y,\eta,t},\phi_{y',\eta', t'}\>$ is not zero.
Then we have
$$\eta'+\gamma' (t')^{-1}=\eta+\gamma t^{-1}$$
for some
$$-2^{-8}b \le  \gamma,\gamma' \le 2^{-8}b \ .$$
and hence $$|\eta-\eta'|\le 2^{-4}b(t')^{-1}\ .$$
By definition of the reduced domains we have
$$\alpha(\eta-\xi_k)+\beta t^{-1} \le t^{-1}\ ,$$
$$b(t')^{-1}\le \eta'-\xi_l\ .$$
This gives
$$\xi_k-\xi_l= (\eta-\eta')  - (\eta-\xi_k)+ (\eta'-\xi_l)$$
$$\ge 
-2^{-4}b (t')^{-1}
 - \alpha^{-1} (1-\beta) t^{-1} + b(t')^{-1}
\ .$$
Using $B t'\le t$ the last display  strictly larger than $0$
and hence the tent $T_k$ has been chosen prior to $T_l$.
Since $(y',\eta',t')$ is in the reduced tent  $ T_l^*$, it is
not in $ E_l$ and hence not in $T_k$.
But $t'< t\le s_k$ and 
$$|\alpha(\eta'-\xi_k)+\beta(t')^{-1}|$$
$$\le |\alpha(\eta'-\eta)|+
|\alpha(\eta-\xi_k)+\beta(t)^{-1}|+|\beta (t')^{-1}-\beta(t^{-1})|
$$
$$\le 2^{-4}b(t')^{-1}+t^{-1}+|\beta t^{-1}|+|\beta(t')^{-1}|\le (t')^{-1}$$
and hence we need to have
$$|y'-x_k|\ge s_k-t'\ .$$
This implies
\begin{equation}\label{tentseparation}
|y'-x_k|\ge s_k-t\ .
\end{equation}

Now pick a further point $(y'',\eta'',t'')\in T_{l'}^*$
with $Bt''\le t$ and nonzero inner product 
$\<\phi_{y,\eta,t},\phi_{y'',\eta'', t''}\>$.
We have again 
$$|\eta-\eta''|\le 2^{-4}b(t'')^{-1}$$
and 
$$b(t'')^{-1}\le \eta''-\xi_{l'} \ .$$
Now we assume $Bt''\le t'$.
Then we conclude 
$$|\eta'-\eta''|\le 2^{-2}b(t'')^{-1}$$
and
$$\xi_l-\xi_{l'}=(\xi_l-\eta')+(\eta'-\eta'')+(\eta''-\xi_{l'})$$
$$\ge - 2\alpha^{-1}(t')^{-1} - 2^{-2}b(t'')^{-1}+b(t'')^{-1}>0\ .$$
Hence $T_l$ was chosen prior to $T_{l'}$ and in particular
$(y'',\eta'',t'')$ is not in $T_l$. But we have $t''<t'\le s_l$ and
$$|\alpha(\eta''-\xi_l)+\beta(t'')^{-1}|$$
$$\le |\alpha(\eta''-\eta')|+|\alpha(\eta'-\xi_l)+\beta(t')^{-1}|
+|\beta(t'')^{-1}-\beta(t')^{-1}|
$$
$$\le 2^{-2} b (t'')^{-1}
+(t')^{-1}+ \beta(t'')^{-1}+\beta(t')^{-1} \le (t'')^{-1}\ .$$
Since $(y'',\eta'',t'')$ is not in $T_l$ 
we conclude
$$|y''-x_l|>s_l-t''>s_l-t'\ge |y'-x_l|$$
and in particular $y''\neq y'$.

%
%

To summarize our finding, fix $(y,\eta,t)\in T_k^*$.
Then for fixed $y'\in \R$, the minimal and maximal values
of parameters $t'$ with $Bt'\le t$ such that there exists 
$l$ and $\eta'$ with $(y',\eta',t')\in T_l^*$ and 
$\<\phi_{y,\eta,t},\phi_{y',\eta', t'}\>\neq 0$ are 
at most a factor $B$ apart. 
It follows that for every $y'$ there exists an interval $I(y')=[T(y'), B T(y')]$
such that we need $t'\in I(y')$ for such $l,\eta'$ to exist.

Using also (\ref{tentseparation}) and disjointness of the reduced domains
$T_l^*$ , we may thus estimate $H_k$ by
$$C \int_{{T}_k^*} 
\left( \int_{|y'-x_x|>s_k-t}  
\int_ {I(y')} \int_\R 
 \lambda t'^{1/2} |\<\phi_{y,\eta,t},\phi_{y',\eta', t'}\>| \, d\eta'
\, \frac{dt'}{t'} 
 \, dy' \right)^{2}
dy\, d\eta \, \frac{dt}{t}\ .$$
$$ \le C\int_{{T}_k^*} 
\left(
\int_{|y'-x_x|>s_k-t}  
 \sup_{t'\in I(y')}\int_\R 
 \lambda t'^{1/2} |\<\phi_{y,\eta,t},\phi_{y',\eta', t'}\>|  
\, d\eta' \, dy' \right)^{2}
dy\, d\eta \, \frac{dt}{t}\ .$$

Further, by trivial reasoning with the Fourier support of the
bump functions, if we fix $y'$ and $t'$ as in this integral, 
then there is an interval of length $2t'^{-1}$ which must contain $\eta'$
for the inner product $\<\phi_{y,\eta,t},\phi_{y',\eta', t'}\>$ to
be nonzero. Using the estimate 
$$\<|\phi_{y,\eta,t}|,|\phi_{y',\eta', t'}|\>   \le C (\frac{t'}t)^{1/2}
(1+\frac{|y'-y|}{t})^{-2}\ ,$$
we obtain for the previous display the upper bound  
$$  C\int_{T_k^*} 
\left( \int_{|y'-x_k|>s_k-t}\
 \lambda (\frac{1}{t})^{1/2} (1+\frac{|y'-y|}{t})^{-2}
 dy' \, 
\right)^{2}
dy\, d\eta \, \frac{dt}{t} $$
$$\le  C\int_{T_k^*} 
\left( 
 \lambda {t}^{1/2} (1+\frac{s_k-|y-x_k|}{t})^{-1}
\right)^{2}
dy\, d\eta \, \frac{dt}{t} $$
$$\le  C\int_0^{s_k} \int_{x_k-s_k}^{x_k+s_k}\int_{\xi_k-2\alpha^{-1}t^{-1}}^{\xi_k+2\alpha^{-1}t^{-1}} 
 \lambda^2 t(1+\frac{s_k-|y-x_k|}{t})^{-2}
d\eta \, dy\, \frac{dt}{t} $$
$$\le C\lambda^2s_k\ .$$
This completes our estimation of (\ref{lacunarybessel})
and we have shown
$$\frac 14 A^2\le CA + C\sum_{k=1}^n\lambda^2 s_k\le CA\ ,$$
where in the last inequality we have used the lower bound
on the selected tents.
Dividing by $A$ proves the desired estimate for $A$ and
completes the proof of (\ref{count.tiles.plus}) for the newly selected tents.

If the selection of tents stops lacking any further $(x,\xi,s)$ with
$$s^{-1}\int_{(T(x,\xi,s)\cap X_\xi^+)\setminus (T^b_{x,\xi,s}\cup E_n)} 
|F(y,\eta,t)|^2\, dy\, d\eta\, dt
\ge 2^{-8}\lambda^2\ ,$$
then clearly the converse inequality holds for all $(x,\xi,s)\in X_\Delta$.
If the selection of tents does not stop, we collect
$T_k$ for all $k\in \N$ and write $E_{(1)}=E\cup_{k=1}^\infty E_k$.
Note that $\xi_k$ is a decreasing sequence, and as noted
before the possible values of $\xi_k$ are in the discrete lattice
$\Z b 2^{-8-k_{\max}}$.
If $\xi_k\to -\infty$, then for every $(x,\xi,s)\in X_\Delta$
$$s^{-1}\int_{(T(x,\xi,s)\cap X_\xi^+)\setminus (T^b_{x,\xi,s}\cup E_{(1)})} 
|F(y,\eta,t)|^2\, dy\, d\eta\, dt
\le 2^{-8}\lambda^2\ .$$
Namely, assume not, then $\xi> \xi_k$ for some $k$, and this
would contradict the choice of $T_k$.

Now assume $\xi_k$ does not tend to $-\infty$, then the sequence
stabilizes, that means eventually becomes constant, at some 
value $\xi_{(1)}$. We shall then choose further
tents, and for emphasis we rename the previously selected tents
into $ T_k =:T_{(1),k}=T(x_{(1),k},\xi_{(1),k},s_{(1),k})$.

Call a point $(x,\xi,s)\in X_\Delta$ bad if
$$s^{-1}\int_{T(x,\xi,s)\cap X_\xi^+)\setminus (T^b(x,\xi,s)\cup E_{(1)})} 
|F(y,\eta,t)|^2\, dy\, d\eta\, dt
\ge 2^{-8}\lambda^2\ ,$$
and let $\xi_{\max}$ be the maximal possible value of the second
component of a bad point. Note that $\xi_{\max}$ is strictly
less than $\xi_{(1)}$. For if not, then $\xi_{\max}=\xi_{(1)}$
by choice of the previously selected tents. Since $s_k\to 0$
by (\ref{count.tiles.plus}) we have $s>s_k$ for some $k$ and some
bad point $(x,\xi_{\max},s)$. This
however contradicts the choice of the tent $T_{(1),k}$.
We then choose a bad point $(x_{(2),1},\xi_{(2),1},s_{(2),1})$
such that $\xi_{(2),1}=\xi_{\max}$ and $s_{(2),1}$ is maximal among
all such choices.
We then iterate this selection process as before, obtaining
tents $T_{(2),k}=T(x_{(2),k},\xi_{(2),k},s_{(2),k})$. 
Our proof of \eqref{count.tiles.plus} applies verbatim to yield
$$(\sum_{k=1}^\infty s_{(1),k})+ (\sum_{k=1}^n s_{(2),k}) 
\le C\lambda^{-2} \ .$$

We now continue this double recursion in the obvious manner. 
If at some point the recursion stops,  or yields for some fixed 
$m$ a sequence $\xi_{(m),k}$ tending to $-\infty$, then by the
previous discussions we are left with no bad points. If the 
double iteration does not stop, we obtain a double sequence of
tents $T_{(m),k}$ with
$$\sum_{m=1}^\infty \sum_{k=1}^\infty s_{(m),k}\le C\lambda^{-2} \ .$$
Moreover, the sequence $\xi_{(m)}$ of stabilizing points decreases
to $-\infty$, since they are strict monotone decreasing and 
in a discrete lattice. We can then observe that there are no
bad points outside $\bigcup _{m=1}^\infty E_{(m)}$.

Summarizing, we have found 
a collection $\Q_+$ of tents such that
$$\sum_{T\in \Q_+}^\infty \sigma(T) \le C \lambda^{-2}$$
and if we set
$$E_+=E\cup \bigcup_{T\in \Q_+} T\ ,$$
then we have
$$s^{-1}\int_{(T(x,\xi,s)\cap X_\xi^+)\setminus T^b(x,\xi,s)} 
|F(y,\eta,t)1_{ E_+^c}(y,\eta,t)|^2\, dy\, d\eta\, dt
\le 2^{-8}\lambda^2\ ,$$
for all $(x,\xi,\delta)\in X_\Delta$.

We may repeat the above argument symmetrically to obtain 
a collection $\Q_-$ of tents such that
$$\sum_{T\in \Q_-}^\infty \sigma(T) \le C \lambda^{-2}$$
and if we set
$$E_-=E\cup \bigcup_{T\in \Q_-} T\ ,$$
then we have
$$s^{-1}\int_{(T(x,\xi,s)\cap X_\xi^-)\setminus T^b(x,\xi,s)} 
|F(y,\eta,t)1_{E_-^c}(y,\eta,t)|^2\, dy\, d\eta\, dt
\le 2^{-8}\lambda^2\ ,$$
for all $(x,\xi,\delta)\in X_\Delta$.

Setting finally $\Q=\Q_0\cup \Q_+\cup \Q_-$ we have
clearly found the desired collection of tents.
This completes the proof 
of the endpoint $p=2$ of Theorem \ref{gen.carl.emb}.
\end{proof}

\section{The bilinear Hilbert transform}
\label{bhtsection}

The most immediate application of Theorem \ref{gen.carl.emb} is to prove basic
estimates for the bilinear Hilbert transform. Another possible application is
towards Carleson's theorem \cite{carleson} on almost everywhere convergence of 
Fourier series. However, the latter application requires more work, as 
Carleson's operator lacks the symmetry that is exhibited by the bilinear Hilbert 
transform and therefore needs an additional embedding theorem. Hence we decided
to restrict attention to the bilinear Hilbert transform, which suffices to 
illustrate some key points of time-frequency analysis originating in Carleson's work
on convergence of Fourier series.

Let $\beta=(\beta_1,\beta_2,\beta_3)$ be a vector in $\R^3$ with pairwise
distinct entries. For three Schwartz functions $f_1,f_2,f_3$ on the real line  
we define
$$\Lambda_\beta(f_1,f_2,f_3):=p.v. \int_\R [\int_\R [\prod_{j=1}^3 f_j(x-\beta_jt) ]\, dx ]\, \frac{dt} t\ .$$
Note that the inner integral produces a Schwartz function in the variable
$t$, to which we apply the tempered distribution $p.v. 1/t$.
By a change of variables, scaling $t$ and translating $x$, we may and do restrict 
attention to vectors $\beta$ which have unit length and are perpendicular to 
$(1,1,1)$. The resulting one parameter family of trilinear forms is dual
to a family of bilinear operators called bilinear Hilbert transforms.
To obtain explicit expressions for these bilinear operators, one applies 
another translation in the $x$ variable to make one of the components, 
say $\beta_i$ vanish. After interchanging 
the order of integrals one obtains an explicit pairing of a bilinear 
operator in $f_j$, $j\neq i$, with the function $f_i$.

Let $\alpha$ be a unit vector perpendicular to $(1,1,1)$ and $\beta$.
The vector $\alpha$ is unique up to reflection at the origin, and has
only non-zero components by the assumption that $\beta$ has
pairwise distinct components. Note also that $|\beta_j|\le 0.9$ for each
$j$. For if one component of $\beta_j$ in absolute value exceeds $0.9$, then 
since $\beta$ is perpendicular to $(1,1,1)$, at least one further component 
has to exceed $0.45$ in absolute value. But then the vector cannot be a unit vector.


The following a priori 
estimate for $\Lambda_\beta$ originates in \cite{lacey-thiele1}.

\begin{theorem}\label{bilinearhttheo}
For a unit vector $\beta$ perpendicular to $(1,1,1)$ with pairwise
distinct entries, and for 
$2<p_1,p_2,p_3<\infty$ with $\sum_j \frac 1{p_j} =1$, there is a constant 
$C$ such that for all Schwartz functions $f_1,f_2,f_3$ we have
$$|\Lambda_\beta (f_1,f_2,f_3)|\le C \prod_{j=1}^3 \|f_j\|_{p_j}\ .$$ 
\end{theorem}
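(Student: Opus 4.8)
The plan is to reduce $\Lambda_\beta$ to an integral of a triple product of embedded functions on the generalized tent space $X = \R\times\R\times(0,\infty)$, and then apply the outer H\"older inequality (Proposition~\ref{p.hoelder-energy}) together with the generalized Carleson embedding theorem (Theorem~\ref{gen.carl.emb}), exactly in the spirit of the reduction of the $T(1)$ theorem in Section~\ref{t1section} but with the extra frequency variable. First I would pick a Schwartz function $\psi$ with $\widehat\psi$ supported in a tiny interval $(-2^{-8}b, 2^{-8}b)$ around the origin and with a Calder\'on-type reproducing property, and write, after scaling $t$ and using the mean-zero structure of $p.v.\,1/t$,
$$\Lambda_\beta(f_1,f_2,f_3) = C\int_0^\infty\int_\R \Big[\prod_{j=1}^3 G_j(y,t)\Big]\,\frac{dy\,dt}{t}$$
for suitable functions $G_j$ on the upper half plane built from $f_j$ via wave packets of scale $t$ located at $y$; the parameters $\beta_j$ enter as shears, so that $G_j(y,t)$ is essentially $F_{\phi}(f_j)$ evaluated at a sheared point $(y+\beta_j t\cdot(\text{something}),t)$. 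The key point is that to make the H\"older exponents work with $\sum 1/p_j=1$ one must split the frequency variable: one inserts a partition of unity (or an integral over a frequency parameter $\xi$) so that the $j$-th factor is tested against a wave packet of frequency roughly $\xi$ shifted by $\alpha_j$, producing the three-dimensional objects $F_j(y,\xi,t)$ on $X$ to which Theorem~\ref{gen.carl.emb} applies.

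Next I would organize the frequency decomposition. Using the vector $\alpha$ perpendicular to $(1,1,1)$ and $\beta$, and a smooth partition of unity on the frequency line, one writes $\Lambda_\beta$ as a superposition over a frequency scale of trilinear expressions, each of which after the change of variables becomes
$$\int_X \prod_{j=1}^3 F_{\alpha_j,\beta_j,\phi}(f_j)(y,\xi,t)\, dy\,d\xi\,\frac{dt}{t\cdot t^{-1}}$$
up to harmless normalizing factors, where the three shears $(\alpha_j,\beta_j)$ are distinct (this uses that $\alpha$ and $\beta$ have all nonzero, pairwise distinct entries). Here I would invoke Proposition~\ref{measuredomination} to dominate the integral by $C\|\prod_j F_j\|_{\L^1(X,\sigma,S^b_1)}$ — one needs the analogue of the size $S_1$ in the generalized tent setting and the estimate $\int_E |F|\,d\nu \le CS(F)(E)\sigma(E)$ with $\nu$ ordinary Lebesgue-type measure on the relevant slab, which follows from Cauchy--Schwarz on each tent just as in the classical case. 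Then I would apply the outer H\"older inequality, Proposition~\ref{p.hoelder-energy}, with the size factorization $S^b_1(F_1F_2F_3) \le S^b(F_1)S^b(F_2)S^b(F_3)$ (the $L^2$ parts combining to an $L^1$ part, the $L^\infty$ parts absorbing) to get
$$|\Lambda_\beta(f_1,f_2,f_3)| \le C\prod_{j=1}^3 \|F_{\alpha_j,\beta_j,\phi}(f_j)\|_{\L^{p_j}(X,\sigma,S^b)}.$$
Finally, since each $p_j>2$, Theorem~\ref{gen.carl.emb} gives $\|F_{\alpha_j,\beta_j,\phi}(f_j)\|_{\L^{p_j}(X,\sigma,S^b)}\le C\|f_j\|_{p_j}$, which completes the estimate; the $p=2$ weak-type endpoint of Theorem~\ref{gen.carl.emb} is what forces the open condition $p_j>2$ rather than $p_j\ge 2$.

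The main obstacle I expect is the \emph{bookkeeping of the reduction to the model form}: one must verify that the triple product $\prod_j f_j(x-\beta_j t)$, after inserting a Calder\'on reproducing formula for each factor and performing the change of variables suggested by the orthogonal frame $\{(1,1,1),\alpha,\beta\}$, genuinely produces the three sheared embeddings $F_{\alpha_j,\beta_j,\phi}(f_j)$ integrated against $dy\,d\xi\,dt$ over $X$, with the shear parameters $\alpha_j$ landing in the allowed range $0<|\alpha_j|\le 1$ and $|\beta_j|\le 0.9$ (the latter is checked in the excerpt, the former follows after normalizing $\alpha$ up to a global dilation that can be absorbed). A secondary technical point is establishing the size-factorization hypothesis \eqref{e.size-factorization} and the domination hypothesis of Proposition~\ref{measuredomination} for the generalized-tent size $S^b$, since $S^b$ is a sum of an $L^2$ average and an $L^\infty$ part; one checks that on any tent the product of three such sizes dominates the $L^1$-type size of the product, using that at most two of the three factors need the $L^2$ portion while the third can be controlled by the supremum, mirroring the choice of exponents $\L^{p_1}\times\L^{p_2}\times\L^{p_3}$ with all $p_j>2$. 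This second step is routine once the first is set up correctly, so the real work is the explicit change of variables and frequency decomposition turning the bilinear Hilbert form into an honest outer-$\L^1$ integral on $X$.
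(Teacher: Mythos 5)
Your overall strategy matches the paper's exactly: reduce $\Lambda_\beta$ to a triple integral of sheared generalized-tent embeddings over $X=\R\times\R\times(0,\infty)$, apply Proposition~\ref{measuredomination} to pass to an outer $L^1$ norm, factor the $L^1$-type size into three copies of the size $S^b$ from Theorem~\ref{gen.carl.emb} using pairwise disjointness of the excised regions, invoke the outer H\"older inequality, and finish with the generalized Carleson embedding at exponents $p_j>2$. Your observations about which factor uses the $L^\infty$ portion on which region, and about $p_j>2$ being forced by the weak-type $L^2$ endpoint of the embedding, are also exactly as in the paper.

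However, the step where you sketch the reduction has a genuine gap. You first claim a two-dimensional factorized model form
$\Lambda_\beta(f_1,f_2,f_3) = C\int_0^\infty\int_\R \prod_{j=1}^3 G_j(y,t)\,dy\,dt/t$
with $G_j$ built unilinearly from $f_j$ by a spatially sheared, \emph{unmodulated} wave-packet embedding. No such representation can exist: that form is a paraproduct (no modulation symmetry), whereas $\Lambda_\beta$ is modulation-invariant under $f_j\mapsto e^{i\alpha_j\xi\cdot}f_j$; the three wave packets must carry a \emph{correlated} frequency parameter $\eta$, which is precisely the third coordinate of $X$. You do immediately say the frequency variable must be inserted, but what you sketch --- inserting a reproducing formula for each factor, or a partition of unity, to upgrade the 2D form to 3D --- is the naive expansion, which produces a high-dimensional integral with substantial off-diagonal contributions that you never account for. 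The paper avoids this entirely by running the reduction in the \emph{opposite direction}: it starts from the 3D model form $\int_0^\infty\int_\R\int_\R\prod_j F_j(y,\alpha_j\eta+\beta_j t^{-1},t)\,d\eta\,dy\,dt$ and uses the Fourier slicing identity \eqref{perpendicularintegrals} adapted to the orthogonal frame $\{(1,1,1),\alpha,\beta\}$ to show this single integral already equals a nontrivial linear combination $a\int\prod_j f_j + b\,\Lambda_\beta$, by identifying the resulting distributional kernel $\int_0^\infty t^{-2}e^{-it^{-1}v}\psi(t^{-1}v)\,dt$ as a nonzero multiple of the Fourier transform of $1_{(-\infty,0)}$. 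Without this reverse step (or an equivalent diagonalization argument), your reduction does not close; everything after Proposition~\ref{measuredomination} is sound, but the passage from $\Lambda_\beta$ to the outer $L^1$ integral on $X$ --- which you yourself flag as ``the real work'' --- is where the paper does something cleverer than what you describe.
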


We give a new proof of this theorem based on 
Theorem \ref{gen.carl.emb}
and an outer H\"older inequality. This proof is analoguous to the 
previously presented proof of boundedness of paraproducts.
In our approach, much of the difficulty in proving bounds for the 
bilinear Hilbert transform has been moved into the proof of the
generalized Carleson embedding theorem. What remains to be done
is relatively easier and in particular conceptually quite simple. 
It is the strength of our approach that the main difficulty
is packaged into a cleanly separated module; previous approaches
do not suggest the formulation of as clean a statement as 
Theorem \ref{gen.carl.emb}. In particular, our proof is the first one
to succeed without the passage to a discrete model operator. This
avoids a cumbersome setup of choices of the discretization.

One can prove a version of Theorem \ref{bilinearhttheo} with 
a constant independent of $\beta$, see \cite{grafakos-li}, but 
only at the expense of considerable additional work. 
One may also extend the range of exponents, see \cite{lacey-thiele2}. 
It would be interesting to discuss these results in the context of
outer measure theory, but this is beyond the scope of the present 
paper.

\begin{proof}[Proof of Theorem \ref{bilinearhttheo}]
Define for
$j=1,2,3$
\begin {equation}\label{bhtemb}
F_j(y,\eta,t):=\int_\R f_j(x) e^{i\eta (y-x)} t^{-1}\phi(t^{-1}(y-x)) \, dx\ ,
\end{equation}
where $\phi$ is a real valued Schwartz function such that $\widehat{\phi}$
is nonnegative, non vanishing at the origin, and supported in 
$[-\epsilon,\epsilon]$ for suitably small $\epsilon$. It will suffice to choose 
$\epsilon=2^{-16}$.

The estimate of Theorem \ref{bilinearhttheo} can be reformulated
by means of the functions $F_j$. 
\begin{lemma}\label{bhttheorem}
Under the assumptions of Theorem \ref{bilinearhttheo}
there is a constant $C$ depending only on $\beta$, $p_1,p_2,p_3$ 
and $\phi$ as above such that
\begin{equation}\label{bhtbound}
\left|\int_0^\infty \int_\R\int_\R 
\prod_{j=1}^3 F_j (y,\alpha_j\eta +\beta_jt^{-1}, t) \, d\eta \, dy\, dt\right|
\le C\prod_{j=1}^3 \|f_j\|_{p_j}\ .
\end{equation}
\end{lemma}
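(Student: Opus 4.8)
The plan is to mimic, in the generalized–tent setting, the proof of the paraproduct estimates given above: rewrite the left side of \eqref{bhtbound} as a single Lebesgue integral of a product of three functions on the upper three--space, bound it by an outer $L^1$ norm of that product via measure domination (Proposition \ref{measuredomination}), split by an outer H\"older inequality (Proposition \ref{p.hoelder-energy}) into three outer $L^{p_j}$ norms, and estimate each factor by the generalized Carleson embedding theorem, Theorem \ref{gen.carl.emb}. Before starting one should note that the integral in \eqref{bhtbound} is absolutely convergent for Schwartz data, so that all manipulations are legitimate: since $\widehat\phi$ is supported in $[-\epsilon,\epsilon]$ with $\epsilon=2^{-16}$, on the Fourier side $F_j(y,\zeta,t)$ is the restriction of $\widehat{f_j}$ to $\{|\omega+\zeta|<\epsilon t^{-1}\}$ tested against a bounded multiplier, and evaluating at $\zeta=\alpha_j\eta+\beta_j t^{-1}$ together with $\sum_j\alpha_j=\sum_j\beta_j=0$ (because $\alpha,\beta\perp(1,1,1)$) shows that the three localizing windows are simultaneously nonempty only for $t$ in a bounded range depending on the $f_j$, with rapid decay at the two ends; this crude estimate is not of the form $\prod_j\|f_j\|_{p_j}$, which is why the outer--measure argument is needed.

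For the main estimate I would introduce, for $j=1,2,3$, the homeomorphism $\Phi_j\colon X\to X$, $\Phi_j(y,\eta,t):=(y,\alpha_j\eta+\beta_j t^{-1},t)$, and set $G_j:=F_j\circ\Phi_j$, so that the left side of \eqref{bhtbound} equals $\big|\int_X G_1G_2G_3\,dy\,d\eta\,dt\big|$. Working on $X$ with the generating collection of generalized tents, $\sigma(T(x,\xi,s))=s$, and $\nu=dy\,d\eta\,dt$, Proposition \ref{measuredomination} — applied with an $L^1$--type size of the same shape as $S^b$, obtained from an $L^1$--average over a tent minus an auxiliary core plus a supremum over the tent — bounds this by $C\|G_1G_2G_3\|_{\L^1(X,\sigma,S)}$; the core removal is harmless because on such cores the frequency--sum--zero relation forces one of the three $F_j$ to be evaluated at a frequency of size $\gtrsim t^{-1}$, where it decays, quantitatively because the cross products $\alpha_i\beta_j-\alpha_j\beta_i$ are bounded away from $0$ (they are the components of $\alpha\times\beta=\pm(1,1,1)/\sqrt3$, hence all of modulus $1/\sqrt3>\epsilon$). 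Iterating the outer H\"older inequality with the pointwise factorization
$$\int_E|g_1g_2g_3|\,d\nu\ \le\ \Big(\sup_E|g_3|\Big)\Big(\int_E|g_1|^2\,d\nu\Big)^{1/2}\Big(\int_E|g_2|^2\,d\nu\Big)^{1/2}$$
(with the three indices interchangeable, which is where the symmetry of the bilinear Hilbert transform among the inputs is used) then yields, for $\sum_j 1/p_j=1$,
$$\|G_1G_2G_3\|_{\L^1(X,\sigma,S)}\ \le\ C\,\|G_1\|_{\L^{p_1}(X,\sigma,S_1)}\,\|G_2\|_{\L^{p_2}(X,\sigma,S_2)}\,\|G_3\|_{\L^{p_3}(X,\sigma,S_3)},$$
where $S_1,S_2$ are $L^2$--type generalized--tent sizes attached to suitably tilted auxiliary cores and $S_3$ is an $L^\infty$--type size, the contribution of the tilted cores being absorbed harmlessly by the transversality noted above.

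It then remains to transport each factor back to $f_j$. For this I would fix $\alpha_0:=\min_j|\alpha_j|>0$ and set $\alpha^{(j)}:=\alpha_0/\alpha_j$, $\beta^{(j)}:=-\alpha^{(j)}\beta_j$; then $0<|\alpha^{(j)}|\le1$ and, since $|\alpha_0|\le|\alpha_j|$, also $|\beta^{(j)}|\le|\beta_j|\le0.9$, so $(\alpha^{(j)},\beta^{(j)})$ is an admissible parameter pair for Theorem \ref{gen.carl.emb} — here the bound $|\beta_j|\le0.9$ recorded in the setup is exactly what is used. A direct computation shows that $\Phi_j$ carries the generalized tent $T_{\alpha^{(j)},\beta^{(j)}}(x,\xi,s)$ onto $T_{\alpha_0,0}(x,\xi/\alpha_j,s)$ and the auxiliary core attached to the former onto the core defining $S_j$ at the latter; hence Proposition \ref{pullback} applied to $\Phi_j$ (with constants $A,B$ depending only on $\beta$) gives $\|G_j\|_{\L^{p_j}(X,\sigma,S_j)}=\|F_j\circ\Phi_j\|_{\L^{p_j}(X,\sigma,S_j)}\le C\|F_j\|_{\L^{p_j}(X,\sigma_{\alpha^{(j)},\beta^{(j)}},S^b)}$, and since $p_j>2$, Theorem \ref{gen.carl.emb} gives $\|F_j\|_{\L^{p_j}(X,\sigma_{\alpha^{(j)},\beta^{(j)}},S^b)}\le C\|f_j\|_{p_j}$. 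Combining the three displays proves \eqref{bhtbound}.

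The main obstacle I expect is the bookkeeping concentrated in the measure--domination/H\"older step: one must design the $L^1$--size $S$ and the companion sizes $S_1,S_2,S_3$ so that simultaneously the hypothesis $\int_E|g|\,d\nu\le C\,S(g)(E)\sigma(E)$ of Proposition \ref{measuredomination} holds, $S$ factorizes through $S_1,S_2,S_3$ in the sense of \eqref{e.size-factorization}, and each $S_j$ dominates the $\Phi_j$--pullback of $S^b$ in the sense of \eqref{spullback}; in particular one has to verify that the $t$--dependent tilt that $\Phi_j$ induces on the auxiliary cores $T^b$ is compatible with all three requirements, and that on these cores the triple product is genuinely negligible because of the transversality encoded in $\alpha\times\beta\parallel(1,1,1)$. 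Everything else is routine, being either a change of variables or a direct comparison of tents as in Lemma \ref{discretizationlemma}.
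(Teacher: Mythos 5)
Your overall architecture is the right one — reduce to $\int_X G_1G_2G_3$, apply Proposition~\ref{measuredomination}, then outer H\"older, then Theorem~\ref{gen.carl.emb} via the homeomorphisms $\Phi_j$ — but the middle step has a genuine gap that the asymmetric factorization cannot get around.

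The pointwise factorization
$$\int_E|g_1g_2g_3|\,d\nu\ \le\ \Big(\sup_E|g_3|\Big)\Big(\int_E|g_1|^2\,d\nu\Big)^{1/2}\Big(\int_E|g_2|^2\,d\nu\Big)^{1/2}$$
with $S_1,S_2$ of $L^2$-type (over $T$ minus a tilted core $T^{(j)}$) and $S_3$ of pure $L^\infty$-type is not compatible with the embedding theorem, for the following concrete reason. Theorem~\ref{gen.carl.emb} controls the $L^2$ average of $F_j$ only over $T\setminus T^b$, i.e.\ after removing the core where $F_j$ can be large (think of $f_j$ a modulated bump); there is no $L^2$ bound over the full tent. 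So a size for $G_1$ must be of the form $(s^{-1}\int_{T\setminus T^{(1)}}|\cdot|^2)^{1/2}$ at best. But then on the sub-region $T\cap T^{(1)}$ the $L^2$ piece of $S_1$ gives you \emph{no} control over $G_1$: you are trying to estimate $\int_{T^{(1)}}|G_1G_2G_3|$ while holding an $L^2$ norm of $G_1$ on the \emph{complement} of $T^{(1)}$. A pure $L^\infty$ size on $G_3$ alone cannot repair this. The ``three indices interchangeable'' remark doesn't help either, because in Proposition~\ref{p.hoelder-energy} the assignment of size $S_j$ to factor $G_j$ is fixed once and for all; you cannot use a different one of the three factorizations on different tents (let alone on different sub-regions of a single tent). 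The fix, which is what the paper does, is to make all three sizes $S_j$ of the same mixed form
$$S_j(G)(T)=\Big(s^{-1}\int_{T\setminus T^{(j)}}|G|^2\,dy\,d\eta\,dt\Big)^{1/2}+\sup_{T}|G|\ ,$$
exactly matching the shape of $S^b$ in Theorem~\ref{gen.carl.emb}. The tent $T$ is then split as $T\setminus\bigcup_j T^{(j)}$ plus $\bigcup_j(T\cap T^{(j)})$: on $T\cap T^{(j)}$ one uses the sup component of $S_j$ for $G_j$ and the $L^2$ components for the other two (since $T^{(j)}\subset T\setminus T^{(k)}$ for $k\neq j$, by disjointness of the cores), and on $T\setminus\bigcup_j T^{(j)}$ one uses three $L^3$ averages, each dominated by the geometric mean of the $L^2$ and $L^\infty$ components via log convexity. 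No ``absorption by transversality'' and no decay estimate on the cores is needed; the only transversality input is the pairwise disjointness of the three regions $T^{(j)}(x,\xi,s)$, which follows just from $\min_{i\ne j}|\beta_i-\beta_j|>0$ (multiply the $j$-th core condition by $|\alpha_j|\le1$ and compare), not from the cross products $\alpha_i\beta_j-\alpha_j\beta_i$.

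Two smaller remarks. First, the size used in the measure-domination step should be the plain $L^1$ average $S(G)(T)=s^{-1}\int_T|G|$: the hypothesis of Proposition~\ref{measuredomination} is then trivially satisfied with $C=1$, and the core removal and sup-norm enter only afterwards, in the size factorization. Second, your renormalization $\alpha^{(j)}=\alpha_0/\alpha_j$, $\beta^{(j)}=-\alpha^{(j)}\beta_j$ is unnecessary and the asserted image of $T_{\alpha^{(j)},\beta^{(j)}}$ under $\Phi_j$ doesn't check out; the paper simply observes that $\Phi_j$ carries $T_{\alpha_j,\beta_j}(x,\alpha_j^{-1}\xi,s)$ onto $T(x,\xi,s)$ and $T^b(x,\alpha_j^{-1}\xi,s)$ onto $T^{(j)}(x,\xi,s)$, with $|\alpha_j|\le1$ and $|\beta_j|\le0.9$ already admissible, so the pullback is a direct change of variables plus Proposition~\ref{pullback}.
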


We postpone the proof of Lemma \ref{bhttheorem} and proceed to deduce
Theorem \ref{bilinearhttheo} from Lemma \ref{bhttheorem}. 

Inserting the definition of $F_j$  and 
using that $\alpha$ and $ \beta$ are perpendicular to $(1,1,1)$ we 
obtain for the integral on the left-hand-side of (\ref{bhtbound}):
$$\int_0^\infty \int_\R\int_\R 
t^{-3}\prod_{j=1}^3 
[\int_\R f_j(x_j) e^{-i \beta_j t^{-1}x_j} e^{-i \alpha_j\eta x_j} 
\phi(t^{-1}(y-x_j)) \, dx_j]
\, d\eta \, dy\, dt\ .$$
Recall that the integral of the Fourier transform of a Schwartz function 
$\varphi$ in $\R^3$ over the  
line through the origin spanned by $\alpha$ is proportional to the integral 
of the Schwartz function itself over the perpendicular hyperplane through the origin spanned by $(1,1,1)$ and $\beta$:
\begin{equation}\label{perpendicularintegrals}
\int_\R \widehat{\varphi}(\eta \alpha)\, d\eta=
c \int_\R \int_\R \varphi(u(1,1,1)+v \beta)\, du \, dv\ .
\end{equation}
To apply this fact, we observe that the inner triple integral of the 
previous display over $x_1,x_2,x_3$ is the value of 
the Fourier transform of a certain Schwartz function in $\R^3$ at the point 
$\eta\alpha \in \R^3$, and the integral in $\eta$ is then the integral of 
this Fourier transformation over the line spanned by $\alpha$. 
Hence we obtain up to a nonzero constant factor for that display:
$$
\int_0^\infty \int_\R 
\int_\R\int_\R[t^{-3}    e^{- i t^{-1} v} \prod_{j=1}^3 
f_j(u+\beta_jv)  \phi(t^{-1}(y-u-\beta_j v))]  \, du\, dv 
\, dy\, dt\ .$$
Here we have used again in the argument of the exponential function that $\alpha$, $\beta$  and
$(1,1,1)$ are pairwise orthogonal and that $\beta$ has unit length.
Changing the order of integration so that the $y$ integration becomes innermost
we obtain for the last display 
$$
\int_{0}^\infty \int_\R \int_\R [\prod_{j=1}^3 f_j(u+\beta_j v)]
t^{-2}e^{-it^{-1} v}
\psi(t^{-1}v)\, du \, dv\, {dt}\ , 
$$
where 
$$\psi (w):= \int_\R 
\prod_{j=1}^3 \phi(z-\beta_j w)\, dz\ .$$
We claim that there are nonzero constants $a$ and $b$ such that
for any Schwartz function $g$ on the real line we have
\begin{equation}\label{lincombclaim}
\int_{0}^\infty \int_\R  g(v)t^{-2}e^{-it^{-1} v} \psi(t^{-1}v)\, dv \, dt=
a g(0)+b\, p.v. \int g(t)\frac{dt}t\ .
\end{equation}
This claim turns the left hand side of
(\ref{bhtbound}) into a nontrivial linear combination of 
$$\int_\R [\prod_{j=1}^3 f_j(u)]\, du$$
 and 
$$p.v. \int[\int f_1(u-\beta_1 t) f_2(u-\beta_2 t) f_3(u-\beta_3 t)  \, du ] \frac {dt} t\ . $$
Since $L^p$ bounds for the former follow by H\"older's inequality, we can deduce
$L^p$ bounds for the latter from $L^p$ bounds as in  (\ref{bhtbound}). 
This will complete the reduction of Theorem \ref{bilinearhttheo}
to Lemma \ref{bhttheorem}, 
once we have verified the above claim.

To see the claim, it suffices to verify that the left-hand-side of
(\ref{lincombclaim}) can be written as a nonzero multiple of 
$$\int_{-\infty}^0 \widehat{g}(\zeta)\, d\zeta\ ,$$
since the characteristic function of the left half line is known to be 
a nontrivial linear combination
of the Fourier transform of the Dirac delta distribution and
the principal value integral against $dt/t$.
Using Plancherel we identify the left-hand-side of (\ref{lincombclaim})
as nonzero multiple of
$$\int_{0}^\infty \int_\R  \widehat{g}(\zeta) \widehat{\psi}( 1-t\zeta )\, d\zeta \, \frac{dt}t\ .$$

The claim will thus follow by Fubini if we can establish that $\widehat{\psi}$   is proportional to a function that 
is nonnegative, nonzero at $0$, and supported in $[-1/2, 1/2]$.
We have 
$$\widehat{\psi}(\eta)= \int_\R\int_\R 
\prod_{j=1}^3 \phi(z-\beta_j w) e^{i  \beta_j \eta (z-\beta_j w)} \, dz\, dw .$$
This is an integral of a Schwartz function in $\R^3$ over the plane spanned 
by $(1,1,1)$ and $\beta$, which by the observation (\ref{perpendicularintegrals}) again may be written
as multiple of the integral
of the Fourier transform of the Schwartz function over the line spanned by $\alpha$:
$$\int_\R \prod_{j=1}^3 \widehat{\phi}(\alpha_j \xi-\beta_j \eta))\, d\xi\ \ .$$
Since $\alpha$ and $\beta$ are perpendicular unit vectors and the support of
$\widehat{\phi}\otimes \widehat{\phi} \otimes \widehat{\phi}$ is in a neighborhood 
of $0$ with diameter less than $1/10$, this integral is non-zero only if $|\eta|$ is smaller than $1/2$. Moreover,
$\widehat{\psi}$ is evidently nonnegative real and nonzero at $0$.
This completes the proof of the claim and the reduction of Theorem \ref{bilinearhttheo}
to Lemma \ref{bhttheorem}.
\end{proof} 

\begin{proof}[Proof of Lemma \ref{bhttheorem}]
We consider the space $X=\R\times \R\times (0,\infty)$ and the outer measure generated by the collection $\E$ of all tents 
$$T(x,\xi,s):=\{(y,\eta,t)\in X: t<s, |y-x|<s-t, |\eta-\xi|\le t^{-1}\} $$
parameterized by $(x,\xi,s)\in X$  and the premeasure 
$\sigma(T(x,\xi,s))=s$.

Define a  size $S$ by setting
$$S(G)(T(x,\xi,s))=s^{-1}\int_{T(x,\xi,s)} |G(y,\eta,t)|\, dy\, d\eta\, dt$$
for each $G\in \B(X)$.

By a straight forward application of Proposition \ref{measuredomination} we may estimate the left-hand-side
of (\ref{bhtbound}) by
$$C\|G_1G_2G_3\|_{L^1(X,\sigma,S)}\ ,$$
where we have defined $G_j$  for $j=1,2,3$ by
$$G_j(y,\eta,t):=F_j(y,\alpha_j \eta+\beta_j t^{-1}, t)\ .$$

We intend to apply a threefold H\"older's inequality, which requires us to define three appropriate sizes $S_j$.
Set
$$b=2^{-8}\min_{i\neq j}|\beta_i-\beta_j|\ .$$
Since no two components of $\beta$ are equal, we have $b>0$. 
Define for each $1\le j\le 3$ and $(x,\xi,s)\in X$ the region 
$$T^{(j)}(x,\xi,s)$$
$$:=\{(y,\eta,t)\in X: t \le  s, |y-x|  \le  s-t, |\alpha_j^{-1}(\eta-\xi)   -  \alpha_j^{-1}\beta_jt^{-1}|\le b t^{-1}\}\ .$$

For fixed $(x,\xi,s)$ the three regions 
$T^{(j)}(x,\xi,s)$ are pairwise disjoint, by symmetry it suffices to establish this
for $j=1,2$. Assume to get a contradiction that we have $\eta,t$ with
$$|\alpha_1^{-1}(\eta-\xi)   -   \alpha_1^{-1}\beta_1 t^{-1}|,\  |\alpha_2^{-1}(\eta-\xi)  -   \alpha_2^{-1}\beta_2 t^{-1}|\le 
b t^{-1}\ .$$
Multiplying by $|\alpha_1|,|\alpha_2|\le 1$ respectively and comparing yields
$|\beta_1-\beta_2|\le 2b$.
This however is a contradiction to the choice of $b$ and thus proves that the regions 
$T^{(j)}(x,\xi,s)$ are pairwise disjoint.

We now observe for each $T=T(x,\xi,s)$ with similar notation
$T^{(j)}= T^{(j)}(x,\xi,s)$ 
$$s S(G)(T)
= \int_{T} |G(y,\eta,t)|\, dy\, d\eta\, dt$$
$$=
\int_{T\setminus(T^{(1)}\cup T^{(2)}\cup T^{(3)})}
|G(y,\eta,t)|\, dy\, d\eta\, dt
+\sum_{j=1}^3 \int_{T\cap T^{(j)}}|G(y,\eta,t)|\, dy\, d\eta\, dt$$
$$\le \prod_{j=1}^3\left( 
\int_{T\setminus T^{(j)}}
|G_j(y,\eta, t)|^3\, dy\, d\eta\, dt \right)^{1/3}$$
$$+\sum_{j=1}^3
\sup_{( y,\eta,t  )\in T^{(j)}}|G_j(y,\eta,t)|
\prod_{k\neq j}
\left(\int_{T\setminus T^{(k)}}|G_k(y,\eta,t)|^2\, dy\, d\eta\, dt \right)^{1/2}\ .$$
Define the size 
$$S_j(G)(T):=(
s^{-1}\int_{T\setminus T^{(j)} }|G(y,\eta,t)|^2 \, dy\, 
d\eta  \,{dt})^{1/2}
+ \sup_{(y,\eta,t)\in T}|G(y,\eta,t)|\ .$$
Then we conclude from the previous considerations that 
$$S(G)(T)
\le 4 \prod_{k=1}^3 S_k(G_k)(T)\ ,$$
where we have with log convexity estimated $L^3$ norms by $L^2$ and $L^\infty$ norms.

By the outer H\"older inequality, Proposition \ref{p.hoelder-energy},  
we obtain for the left-hand-side of (\ref{bhtbound}) the bound 
$$C \prod_{j=1}^3 \|G_j\|_{L^{p_j}(X,\sigma, S_j)} $$
with exponents $p_j$ as in Lemma \ref{bhttheorem}.
It remains to show for each $j$ that 
$$\|G_j\|_{L^{p_j}(X,\sigma,S_j)}\le C \|f_j\|_{p_j}\ .$$
This follows from the generalized Carleson embedding, Theorem \ref{gen.carl.emb}, 
after a re-parametrization of the space $X$ under the homeomorphism
$$\Phi_j:X\to X,\  (y,\eta,t)\mapsto (y,\alpha_j \eta+\beta_j t^{-1}, t)\ .$$
Note that $\Phi_j$ maps 
$T_{\alpha_j,\beta_j}(x,\alpha_j^{-1} \xi,s)$ as defined in \eqref{gentent}
to $T(x,\xi,s)$ as above, and it maps
$T^b(x,\alpha_j^{-1}\xi,s)$ to $T^{(j)}(x,\xi,s)$ as above
and we have $F_j\circ\Phi_j=G_j$.
This completes the proof of Lemma \ref{bhttheorem}.

\end{proof}




\end{document}